\newcommand{\boldN}{\boldsymbol{N}}
\newcommand{\boldL}{\boldsymbol{L}}
\newcommand{\boldgamma}{\boldsymbol{\gamma}}
\newcommand{\Nmin}{N_{\mathrm{min}}}
\newcommand{\Nmed}{N_{\mathrm{med}}}
\newcommand{\Nmax}{N_{\mathrm{max}}}
\newcommand{\Lmin}{L_{\mathrm{min}}}
\newcommand{\Lmed}{L_{\mathrm{med}}}
\newcommand{\Lmax}{L_{\mathrm{max}}}
\newcommand{\fourierabs}[1]{\lfloor #1 \rfloor}
\newcommand{\hypwt}{\mathfrak h}
\newcommand{\vangle}{\theta}
\newcommand{\Abs}[1]{\left\vert #1 \right\vert}
\newcommand{\abs}[1]{| #1 |}
\newcommand{\bigabs}[1]{\bigl\vert #1 \bigr\vert}
\newcommand{\norm}[1]{\left\Vert #1 \right\Vert}
\newcommand{\fixednorm}[1]{\Vert #1 \Vert}
\newcommand{\bignorm}[1]{\bigl\Vert #1 \bigr\Vert}
\newcommand{\Bignorm}[1]{\Bigl\Vert #1 \Bigr\Vert}
\newcommand{\C}{\mathbb{C}}
\newcommand{\N}{\mathbb{N}}
\newcommand{\Proj}{\mathbf{P}}
\newcommand{\R}{\mathbb{R}}
\newcommand{\Innerprod}[2]{\left\langle \, #1 , #2 \, \right\rangle}
\newcommand{\innerprod}[2]{\langle \, #1 , #2 \, \rangle}
\newcommand{\angles}[1]{\langle #1 \rangle}
\DeclareMathOperator{\diag}{diag}
\DeclareMathOperator{\im}{Im}
\DeclareMathOperator{\supp}{supp}
\newtheorem{theorem}{Theorem}[section]
\newtheorem{lemma}{Lemma}[section]
\theoremstyle{definition}
\newtheorem{definition}{Definition}
\theoremstyle{remark}
\newtheorem{remark}{Remark}
\numberwithin{equation}{section}
\title[Maxwell-Dirac]{Null structure and almost optimal local well-posedness of the Maxwell-Dirac system}
\author[P. D'Ancona]{Piero D'Ancona}
\address{Department of Mathematics\\
University of Rome ``La Sapienza''\\
Piazzale Aldo Moro 2\\
I-00185 Rome\\ Italy}
\email{dancona@mat.uniroma1.it}
\author[D. Foschi]{Damiano Foschi}
\address{Department of Mathematics\\
University of Ferrara\\
Via Macchiavelli 35\\
I-44100 Ferrara\\ Italy}
\email{damiano.foschi@unife.it}
\author[S. Selberg]{Sigmund Selberg}
\address{Department of Mathematical Sciences\\
Norwegian University of Science and Technology\\
Alfred Getz' vei 1\\
N-7491 Trondheim\\ Norway}
\thanks{The third author was partially supported by the Research Council of Norway, grant no.\ 160192/V30, PDE and Harmonic Analysis.}
\thanks{This work was completed while the third author visited Princeton University, and he wishes to thank Sergiu Klainerman for his hospitality and for helpful conversations.}
\email{sigmund.selberg@math.ntnu.no}
\subjclass[2000]{35Q40; 35L70}
\begin{document}

\begin{abstract} % goes before maketitle in ams classes
We uncover the full null structure of the Maxwell-Dirac system in Lorenz gauge. This structure, which cannot be seen in the individual component equations, but only when considering the system as a whole, is expressed in terms of tri- and quadrilinear integral forms with cancellations measured by the angles between spatial frequencies. In the 3D case, we prove frequency-localized $L^2$ space-time estimates for these integral forms at the scale invariant regularity up to a logarithmic loss, hence we obtain almost optimal local well-posedness of the system by iteration.
\end{abstract}

\maketitle
\tableofcontents

\section{Introduction}\label{A}

In this paper we uncover the complete null structure of the Maxwell-Dirac system (M-D) in Lorenz gauge. This structure is expressed in terms of tri- and quadrilinear integral forms with certain cancellations measured by the angles between spatial frequencies. In the 3D case, we prove frequency-localized $L^2$ space-time estimates for these integral forms, at the optimal (i.e., scale invariant) regularity up to a logarithmic loss, and as a consequence we obtain almost optimal local well-posedness of the system by iteration.

The null structure that we have found is not the usual \emph{bilinear null structure} that may be seen in bilinear terms of each individual component equation of a system, but instead depends on the structure of the system as a whole, hence we call it \emph{system null structure}. System null structure has been found first by Machedon and Sterbenz \cite{Machedon:2004} for the Maxwell-Klein-Gordon system (M-K-G), and later by the present authors for the Dirac-Klein-Gordon system (D-K-G); see \cite{Selberg:2007d}; in both cases this structure was used to prove the almost optimal local well-posedness of the respective systems.

A key feature, distinguishing our work from earlier results on the regularity of nonlinear gauge field theories, is that we find a null structure relative to the Lorenz gauge condition, whereas up to now the gauge of choice has been the Coulomb gauge, following the seminal works \cite{Klainerman:1994b, Klainerman:1995a}, where a bilinear null structure was found for M-K-G and also the Yang-Mills equations (Y-M) in Coulomb gauge. The Coulomb gauge has been widely used since, also in \cite{Machedon:2004}. For Y-M, the temporal gauge has also been used; see \cite{Tao:2003}. 

An obvious point in favor of the Lorenz gauge is the fact that it is Lorentz (\emph{sic}) invariant, which entails that the equations take a much more symmetric form (they become nonlinear wave equations) than in Coulomb gauge, where one obtains a mix of hyperbolic and elliptic equations.

In fact, the only advantage of Coulomb gauge seems to be that the 4-potential $A$ of the electromagnetic field is better behaved in this gauge, whereas in Lorenz gauge it appears to have rather poor regularity properties. So if one walks down the usual path, thinking of M-K-G or M-D as systems of PDEs for either a scalar field $\phi$ or a spinor $\psi$, and the potential $A$, then this system will likely not be well-posed in Lorenz gauge near the scaling regularity. However, there is no compelling reason to take this point of view, because the regularity of $A$ in itself is really of no interest. Instead, it is the electric and magnetic fields $\mathbf E$ and $\mathbf B$ (or equivalently the tensor $F$) which matter, and these are perfectly well-behaved in Lorenz gauge.

The systems M-K-G and M-D are related, as can be seen from the fact that by ``squaring'' the Dirac part of M-D (which will destroy some of its structure) one obtains an equation that looks like the Klein-Gordon part of M-K-G, but with two bilinear terms added; in \cite{Selberg:2005} it was shown that these additional terms also have a null structure, in Coulomb gauge. Combining this fact with the null structure found in \cite{Klainerman:1994b} for M-K-G in Coulomb gauge, one can then conclude that all the bilinear terms in the ``squared M-D'' are null forms. In view of this, it is conceivable that the analysis in \cite{Machedon:2004}, where almost optimal local well-posedness was proved for M-K-G in Coulomb gauge, could be extended to cover also the ``squared'' M-D in Coulomb gauge, but we do not try to follow this path, which would only add to the already highly complicated analysis in \cite{Machedon:2004}.

In fact, ``squaring'' M-D is not a good idea, as it clearly destroys most of the spinorial structure of the system. And it is precisely the spinorial structure which allows us to find some very powerful cancellations, which however have a remarkably simple form, being expressed in terms of the six angles between the four spatial frequencies in a certain quadrilinear space-time integral form.

This structure enables us to prove closed estimates for the iterates of M-D at the scale-invariant data regularity, up to a logarithmic loss, but this turns out to be quite difficult, and takes up most of the paper. Since, as remarked, the systems M-D and M-K-G are related, it is natural to make a comparison of the techniques used here with those applied in \cite{Machedon:2004}. In fact, some superficial similarities aside, our approaches differ significantly.

As does \cite{Machedon:2004}, we rely on the usual dyadic decompositions of frequency space adapted to the null cone, and angular decompositions for the spatial frequencies taking into account the geometry of interacting null cones.

The key difference is that we do everything within the confines of $L^2$ theory. On the one hand, our spaces are very simple: We use only the standard $L^2$-based Sobolev and wave-Sobolev norms. On the other hand, working entirely in $L^2$ comes at a price: The existing $L^2$ theory for bilinear interactions of waves is not sufficient to handle the quadrilinear space-time integral form which is at the core of the M-D regularity problem, and we develop new techniques to deal with the difficulties that arise. Of course, we do use the $L^2$ bilinear generalizations of the $L^4$ estimate of Strichartz for the homogeneous wave equation (see \cite{Foschi:2000}), but in addition we require a number of modifications of these estimates, proved by the third author in \cite{Selberg:2008a}.

In \cite{Machedon:2004}, by comparison, highly sophisticated spaces were used, built from Besov versions of the $X^{s,b}$ spaces and Tataru's outer block norms, but only well-known bilinear estimates were applied ($L^2$ and mixed-norm generalizations of the Strichartz estimates for the wave equation). 

We now present the M-D system.

On the Minkowski space-time $\R^{1+3}$ we use coordinates $t=x^0$ and $x=(x^1,x^2,x^3)$. The corresponding Fourier variable is denoted $X=(\tau,\xi)$, where $\tau \in \R$ and $\xi \in \R^3$ correspond to $t$ and $x$, respectively. The partial derivative with respect to $x^\mu$ is denoted $\partial_\mu$, and we also write $\partial_t=\partial_0$, $\nabla = (\partial_1,\partial_2,\partial_3)$. Roman indices $j,k,\dots$ run over $1,2,3$, greek indices $\mu,\nu,\dots$ over $0,1,2,3$, and repeated indices are implicitly summed over these ranges. Indices are raised and lowered using the metric $\diag(-1,1,1,1)$. The conventions regarding indices do not apply to mere enumerations, of course, only to indices relating to the coordinates of space-time.

The M-D system describes an electron self-interacting with an electromagnetic field, and is obtained by coupling Maxwell's equations and the Dirac equation:
\begin{gather}\label{A:2}
  \nabla \cdot \mathbf E = \rho,
  \qquad
  \nabla \cdot \mathbf B = 0,
  \qquad
  \nabla \times \mathbf E + \partial_t \mathbf B = 0,
  \qquad
  \nabla \times \mathbf B - \partial_t \mathbf E = \mathbf J,
  \\
  \label{A:4}
  \left(\boldsymbol\alpha^\mu D_\mu + m\boldsymbol\beta\right)\psi = 0.
\end{gather}
The unknowns are the fields $\mathbf E = (E^1,E^2,E^3)$ and $\mathbf B = (B^1,B^2,B^3)$, which are $\R^3$-valued functions of $(t,x)$, and the Dirac spinor $\psi$, which is a $\C^4$-valued function of $(t,x)$; $m \ge 0$ is the rest mass of the electron. We regard elements of $\C^4$ as column vectors, hence it makes sense to premultiply them by the $4\times4$ Dirac matrices
\begin{equation}\label{A:5}
  \boldsymbol\alpha^0 = \mathbf I_{4\times4},
  \qquad
  \boldsymbol\alpha^j =
     \begin{pmatrix}
        0 & \boldsymbol\sigma^j \\
        \boldsymbol\sigma^j & 0 \\
     \end{pmatrix},
   \qquad
   \boldsymbol\beta =
     \begin{pmatrix}
        \mathbf I_{2 \times 2} & 0 \\
        0 & - \mathbf I_{2 \times 2} \\
     \end{pmatrix},
\end{equation}
where the $\boldsymbol\sigma^j$ are the Pauli matrices. The matrices in \eqref{A:5} are all hermitian, and satisfy $(\boldsymbol\alpha^\mu)^2 = (\boldsymbol\beta)^2 = \mathbf I_{4 \times 4}$ and $\boldsymbol\alpha^j \boldsymbol\alpha^k + \boldsymbol\alpha^k \boldsymbol\alpha^j = 0$ for $1 \le j < k \le 3$.

Formally, the second and third equations in \eqref{A:2} are equivalent to the existence of a  four-potential $A_\mu = A_\mu(t,x) \in \R$, $\mu=0,1,2,3$, such that
\begin{equation}\label{A:6}
  \mathbf B = \nabla \times \mathbf A,
  \qquad
  \mathbf E = \nabla A_0 - \partial_t \mathbf A,
\end{equation}
where we write $\mathbf A = (A_1,A_2,A_3)$. In the absence of an electromagnetic field, the operator $D_\mu$ in \eqref{A:4} would just be $-i\partial_\mu$, but in the presence of a field $(\mathbf E, \mathbf B)$ represented by $A_\mu$, this must be modified by the minimal coupling transformation, so that $D_\mu$ becomes the gauge covariant derivative
\begin{equation}\label{A:8}
  D_\mu = D_\mu^{(A)} = \frac{1}{i}\partial_\mu - A_\mu.
\end{equation}
To complete the coupling we plug into \eqref{A:2} the Dirac four-current density
\begin{equation}\label{A:10}
  J^\mu = \innerprod{\boldsymbol\alpha^\mu\psi}{\psi} \qquad (\mu=0,1,2,3),
\end{equation}
where $\innerprod{z}{w}$ is the standard inner product on $\C^4$; this splits into the charge density $\rho = J^0 = \abs{\psi}^2$ and the three-current density $\mathbf J = (J^1,J^2,J^3)$.

Eqs.\ \eqref{A:2}--\eqref{A:8} constitute the M-D system; to simplify it, we can express also the first and fourth equations in \eqref{A:2} in terms of $A_\mu$. Thus, \eqref{A:2} is replaced by
\begin{equation}\label{A:12}
  \square A_\mu - \partial_\mu ( \partial^\nu A_\nu ) = - J_\mu
  \qquad \left( \square = \partial_\mu \partial^\mu = -\partial_t^2 + \Delta \right).
\end{equation}
Here, by the conventions on indices, $J_0 = - J^0$ and $J_j = J^j$ for $j=1,2,3$.

Then M-D consists of \eqref{A:4} and \eqref{A:12}, coupled by \eqref{A:8} and \eqref{A:10}.

This system is invariant under the \emph{gauge transformation}
\begin{equation}\label{A:14}
  \psi \longrightarrow \psi' = e^{i\chi} \psi,
  \qquad A_\mu \longrightarrow A_\mu' = A_\mu + \partial_\mu \chi,
\end{equation}
for any $\chi : \R^{1+3} \to \R$, called the \emph{gauge function}. Indeed, if $(\psi,A_\mu)$ satisfies M-D, then so does $(\psi',A_\mu')$, in view of the identity
$$
  D_\mu^{(A')}\psi' = e^{i\chi}D_\mu^{(A)}\psi.
$$
Since the observables $\mathbf E, \mathbf B, \rho, \mathbf J$ are not affected by \eqref{A:14}, two solutions related by a gauge transformation are physically undistinguishable, and must be considered equivalent. In practice, a solution is therefore a representative of its equivalence class, and we can pick a representative whose potential $A_\mu$ is chosen so that it simplifies the analysis as much as possible. This is known as the \emph{gauge freedom}. In this paper we impose the Lorenz gauge condition (due to Ludvig Lorenz, not Hendrik Lorentz),
\begin{equation}\label{A:16}
  \partial^\mu A_\mu = 0 \qquad \left( \iff \partial_t A_0 = \nabla \cdot \mathbf A \right),
\end{equation}
which greatly simplifies \eqref{A:12}. Then the M-D system becomes
\begin{align}
  \label{A:20}
  \left(-i\boldsymbol\alpha^\mu \partial_\mu + m\boldsymbol\beta\right)\psi &= A_\mu \boldsymbol\alpha^\mu \psi,
  \\
  \label{A:22}
  \square A_\mu &= - \innerprod{\boldsymbol\alpha_\mu\psi}{\psi},
  \\
  \label{A:24}
  \partial^\mu A_\mu &= 0. 
\end{align}
We consider the initial value problem starting from data
\begin{equation}\label{A:30}
  \psi(0,x) = \psi_0(x) \in \C^4,
  \quad \mathbf E(0,x) = \mathbf E_0(x) \in \R^3,
  \\
  \quad \mathbf B(0,x) = \mathbf B_0(x) \in \R^3,
\end{equation}
which in view of the first two equations in \eqref{A:2} must satisfy the constraints
\begin{equation}\label{A:32}
  \nabla \cdot \mathbf E_0 = \abs{\psi_0}^2,
  \qquad
  \nabla \cdot \mathbf B_0 = 0. 
\end{equation}
The initial data for the four-potential $A_\mu$, which we denote by
\begin{equation}\label{A:40}
  A_\mu(0,x) = a_\mu(x) \in \R,
  \qquad
  \partial_t A_\mu(0,x) = \dot a_\mu(x) \in \R
  \qquad (\mu=0,1,2,3),
\end{equation}
must be constructed from the observable data $(\mathbf E_0,\mathbf B_0)$.
We write $\mathbf a = (a_1,a_2,a_3)$ and $\dot{\mathbf a} = (\dot a_1,\dot a_2,\dot a_3)$. By \eqref{A:24} and \eqref{A:6} we get the constraints
\begin{equation}\label{A:42}
  \dot a_0 = \nabla \cdot \mathbf a.
  \qquad
  \mathbf B_0 = \nabla \times \mathbf a,
  \qquad
  \mathbf E_0 = \nabla a_0 - \dot{\mathbf a},
\end{equation}
which determine $\mathbf a, \dot{\mathbf a}$, given $a_0,\dot a_0$. The simplest choice is
\begin{equation}\label{A:48}
  a_0 = \dot a_0 = 0.
\end{equation}
Then $\mathbf a,\dot{\mathbf a}$ are determined by \eqref{A:42}.

The Lorenz gauge condition \eqref{A:24} is automatically satisfied throughout the time interval of existence, for data satisfying \eqref{A:32} and \eqref{A:42}. It suffices to prove this for smooth solutions, since the solutions that we later obtain are limits of smooth solutions. So assume $(\psi,A_\mu)$ is a smooth solution of \eqref{A:20} and \eqref{A:22} on a time interval $(-T,T)$, with the $A_\mu$'s real-valued, and set $u=\partial^\mu A_\mu$. A calculation using \eqref{A:22} yields $\square u = - 2 \im \innerprod{ -i\boldsymbol\alpha^\mu \partial_\mu \psi}{\psi}$, and \eqref{A:20} implies $\square u = 0$; here we use the hermiticity of $\boldsymbol\alpha^\mu$ and $\boldsymbol\beta$. Moreover, $u(0)=\partial_t u(0) = 0$, on account of \eqref{A:42} and \eqref{A:32}, so we conclude that $u(t)=0$ for all $t \in (-T,T)$.

Thus, the Lorenz condition \eqref{A:24} can be removed from the system once we have data satisfying the proper constraints, and we are left with the equations \eqref{A:20} and \eqref{A:22}, but these can be combined into a single nonlinear Dirac equation by splitting the four-potential into its homogeneous and inhomogeneous parts:
\begin{gather}\label{A:60}
  A_\mu = A_\mu^{\mathrm{hom.}} + A_\mu^{\mathrm{inh.}},
  \\
  \label{A:62}
  \square A_\mu^{\mathrm{hom.}} = 0,
  \qquad
  A_\mu^{\mathrm{hom.}}(0,x) = a_\mu(x),
  \qquad
  \partial_t A_\mu^{\mathrm{hom.}}(0,x) = \dot a_\mu(x),
  \\
  \label{A:64}
  A_\mu^{\mathrm{inh.}} = - \square^{-1} \Innerprod{\boldsymbol\alpha_\mu\psi}{\psi}.
\end{gather}
Here we use the notation $\square^{-1}F$ for the solution of the inhomogeneous wave equation $\square u = F$ with vanishing data at time $t=0$.

Thus, M-D in Lorenz gauge has been reduced to the nonlinear Dirac equation
\begin{equation}\label{A:70}
  \left(-i\boldsymbol\alpha^\mu \partial_\mu + m\boldsymbol\beta\right)\psi
  = A_\mu^{\mathrm{hom.}} \boldsymbol\alpha^\mu \psi
  - \mathcal N(\psi,\psi,\psi),
\end{equation}
where
\begin{equation}\label{A:72}
  \mathcal N(\psi_1,\psi_2,\psi_3)
  =
  \left( \square^{-1} \Innerprod{\boldsymbol\alpha_\mu\psi_1}{\psi_2} \right) 
  \boldsymbol\alpha^\mu\psi_3.
\end{equation}

In order to uncover the null structure in \eqref{A:70}, we decompose the spinor as
\begin{equation}\label{B:12}
  \psi=\psi_+ + \psi_-,
  \qquad
  \qquad \psi_\pm \equiv \mathbf\Pi_\pm \psi,
\end{equation}
where $\mathbf\Pi_\pm \equiv \mathbf\Pi(\pm \nabla/i)$ is the multiplier whose symbol is the Dirac projection
\begin{equation}\label{B:8}
  \mathbf\Pi(\xi)
  =
  \frac12 \left( \mathbf I_{4 \times 4} + \frac{\xi^j \boldsymbol\alpha_j}{\abs{\xi}} \right) \qquad (\xi \in \R^3).
\end{equation}
To motivate this, we note that the stationary Dirac operator $-i\boldsymbol\alpha^j\partial_j$ has symbol $\xi^j \boldsymbol\alpha_j$ (recall that $\xi \in \R^3$ denotes the Fourier variable corresponding to $x$), whose eigenvalues are $\pm\abs{\xi}$, with associated eigenspace projections $\mathbf\Pi(\pm\xi)$, as can be seen using the algebraic properties of the Dirac matrices. Note the identities
\begin{gather}
  \label{B:14}
  \mathbf I_{4\times4} = \mathbf\Pi(\xi)+\mathbf\Pi(-\xi),
  \qquad
  R^j(\xi)\boldsymbol\alpha_j = \mathbf\Pi(\xi)-\mathbf\Pi(-\xi),
  \\
  \label{B:16}
  \mathbf\Pi(\xi)^* = \mathbf\Pi(\xi),
  \qquad
  \mathbf\Pi(\xi)^2 = \mathbf\Pi(\xi),
  \qquad
  \mathbf\Pi(\xi)\mathbf\Pi(-\xi) = 0.
\end{gather}
In view of the last two identities, it is no surprise that
\begin{equation}\label{B:17} 
  \abs{\mathbf\Pi(\xi_1)\mathbf\Pi(-\xi_2)z} \lesssim \abs{z} \vangle(\xi_2,\xi_2)
  \qquad \left( \forall \xi_1,\xi_2 \in \R^3 \setminus \{0\}, \, z \in \C^4 \right),
\end{equation}
where $\theta(\xi_1,\xi_2)$ denotes the angle between nonzero vectors $\xi_1,\xi_2$. This estimate, proved in \cite[Lemma 2]{Selberg:2007d}, is a key tool for identifying spinorial null structures.

The right member of \eqref{B:14} can also be restated as
\begin{equation}\label{B:10}
  -i\boldsymbol\alpha^j\partial_j = \abs{\nabla}\,\mathbf\Pi_+ - \abs{\nabla}\,\mathbf\Pi_-,
\end{equation}
where $\abs{\nabla}$ is the multiplier with symbol $\abs{\xi}$. Combining this with \eqref{B:16} and the fact that $\mathbf\Pi(\xi)\boldsymbol\beta = \boldsymbol\beta \mathbf\Pi(-\xi)$, we see that \eqref{A:70} splits into two equations:
\begin{subequations}\label{B:18}
\begin{align}
  \label{B:20}
  \left(-i\partial_t+\abs{\nabla}\right) \psi_+
  =
  - m \boldsymbol\beta \psi_-
  + \mathbf\Pi_+ \left( A_\mu^{\mathrm{hom.}} \boldsymbol\alpha^\mu \psi - \mathcal N(\psi,\psi,\psi) \right),
  \\
  \label{B:22}
  \left(-i\partial_t-\abs{\nabla}\right) \psi_-
  =
  - m \boldsymbol\beta \psi_+
  + \mathbf\Pi_-\left( A_\mu^{\mathrm{hom.}} \boldsymbol\alpha^\mu \psi - \mathcal N(\psi,\psi,\psi) \right).
\end{align}
\end{subequations}
Corresponding to the operators on the left, we define the following  spaces.

\begin{definition}\label{A:Def}
For $s,b \in \R$, $X_\pm^{s,b}$ is the completion of the Schwartz space $\mathcal{S}(\R^{1+3})$ with respect to the norm
\begin{equation}\label{B:24}
  \norm{u}_{X_\pm^{s,b}} = \bignorm{ \angles{\xi}^s
  \angles{\tau\pm\abs{\xi}}^b \,\widetilde
  u(\tau,\xi)}_{L^2_{\tau,\xi}},
\end{equation}
where $\widetilde u(\tau,\xi)$ denotes the Fourier transform of $u(t,x)$, and
$\angles{\xi} = (1+\abs{\xi}^2)^{1/2}$.
\end{definition}

Given $T > 0$, we denote by $X_\pm^{s,b}(S_T)$ the restriction of $X_\pm^{s,b}$ to the time-slab
$$
  S_T = (-T,T) \times \R^3.
$$
We recall the well-known fact that $X_\pm^{s,b}(S_T) \hookrightarrow C([-T,T]; H^s)$, for $b > 1/2$.

Our first main result is that \eqref{A:70} is locally well-posed almost down to the critical regularity determined by scaling. To see what this regularity is, observe that in the massless case $m=0$, M-D is invariant under the rescaling
$$
  \psi(t,x) \longrightarrow \frac{1}{L^{3/2}} \psi\left(\frac{t}{L},\frac{x}{L}\right),
  \qquad
  (\mathbf E,\mathbf B)(t,x) \longrightarrow \frac{1}{L^2} (\mathbf E,\mathbf B)\left(\frac{t}{L},\frac{x}{L}\right),
$$
hence the scale invariant data space is
$$
  \psi_0 \in L^2
  \qquad
  (\mathbf E_0,\mathbf B_0) \in \dot H^{-1/2} \times \dot H^{-1/2},
$$
and one does not expect well-posedness with less regularity than this. Our first main result is that local well-posedness holds with only slightly more regularity:

\begin{theorem}\label{A:Thm1}
Let $s > 0$. Assume given initial data \eqref{A:30} with the regularity
\begin{equation}\label{A:100}
  \psi_0 \in H^s(\R^3;\C^4),
  \qquad
  \mathbf E_0, \mathbf B_0 \in H^{s-1/2}(\R^3;\R^3),
\end{equation}
and satisfying the constraints \eqref{A:32}. Then:

\medskip
\emph{(a)} (Lorenz data.) There exist $\{a_\mu,\dot a_\mu\}_{\mu=0,1,2,3}$ with $a_0=\dot a_0 = 0$ and
\begin{equation}\label{A:102}
  \mathbf a \in \abs{D}^{-1} H^{s-1/2}(\R^3;\R^3),
  \qquad
  \dot{\mathbf a} \in H^{s-1/2}(\R^3;\R^3),
\end{equation}
and such that the constraint \eqref{A:42} is satisfied.

\medskip
\emph{(b)} (Local existence.) Use the data $\{a_\mu,\dot a_\mu\}$ from part \emph{(a)} to define $A_\mu^{\mathrm{hom.}}$ as in \eqref{A:62}. Then there exists a time $T > 0$, depending continuously on the norms of the data \eqref{A:100}, and there exists a
$$
  \psi \in C([-T,T];H^s(\R^3;\C^4))
$$
which solves \eqref{A:70} on $S_T = (-T,T) \times \R^3$ with initial data $\psi(0) = \psi_0$.

\medskip
\emph{(c)} (Uniqueness.) The solution has the regularity, with notation as in \eqref{B:12},
\begin{equation}\label{A:104}
  \psi_+ \in X^{s,b}_+(S_T),
  \qquad
  \psi_- \in X^{s,b}_-(S_T),
\end{equation}
where $b=1/2+\varepsilon$ for $\varepsilon > 0$ sufficiently small, depending on $s$. Moreover, the solution is unique in this regularity class.
\end{theorem}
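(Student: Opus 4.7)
The plan is to prove (a) by explicit construction of the Lorenz data, then set up Picard iteration in the half-wave spaces $X^{s,b}_\pm$ for (b)--(c), reducing everything to multilinear estimates whose core is a space-time quadrilinear integral form carrying the full system null structure.

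For part (a) I choose the simplest possible Lorenz data: take $\mathbf{a} = \abs{D}^{-2}(\nabla\times \mathbf{B}_0)$ and $\dot{\mathbf{a}} = -\mathbf{E}_0$, together with $a_0 = \dot a_0 = 0$. Since $\nabla \cdot \mathbf{B}_0 = 0$, the Fourier-side identity $\nabla\times(\nabla\times \mathbf{a}) - \nabla(\nabla\cdot\mathbf{a}) = -\Delta \mathbf{a}$ forces simultaneously $\nabla\times \mathbf{a} = \mathbf{B}_0$ and $\nabla\cdot \mathbf{a} = 0$, which together with $\dot a_0 = 0$ is precisely \eqref{A:42}. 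The regularity $\mathbf{a} \in \abs{D}^{-1} H^{s-1/2}$ follows from the fact that $\abs{D}^{-2}\nabla$ has order $-1$ on $H^{s-1/2}$.

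For parts (b)--(c), I set up a Picard iteration for the coupled system \eqref{B:20}--\eqref{B:22} in $X^{s,b}_+(S_T) \times X^{s,b}_-(S_T)$ with $b = \half + \varepsilon$ for some small $\varepsilon = \varepsilon(s) > 0$. The free 4-potential $A^{\mathrm{hom.}}_\mu$ built from the Lorenz data of part (a) decomposes into $\pm$-waves sitting in $X^{s-1/2,b}_\pm$ on any slab. Standard linear theory for the half-wave operators $-i\partial_t \pm \abs{\nabla}$ reduces the problem to bounding the right-hand sides $F_\pm$ of \eqref{B:20}--\eqref{B:22} in $X^{s,b-1+\delta}_\pm(S_T)$ with a small positive power of $T$ in front, which furnishes the contraction. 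The nonlinearity splits into the trivial mass term $-m\boldsymbol\beta\psi_\mp$; the bilinear external-field term $\mathbf\Pi_\pm(A^{\mathrm{hom.}}_\mu\boldsymbol\alpha^\mu\psi)$, which becomes a classical spinorial null form after decomposing $\psi = \psi_+ + \psi_-$ and applying \eqref{B:17} to the sandwiched product $\mathbf\Pi(\pm\xi_1)\boldsymbol\alpha^\mu\mathbf\Pi(\pm\xi_2)$; and the cubic term $\mathbf\Pi_\pm \mathcal{N}(\psi,\psi,\psi)$, which is the heart of the matter. Dualizing the cubic piece against a fourth spinor $\psi_4 \in X^{-s,1-b}_\mp(S_T)$ and breaking each $\psi_i$ into $\pm$-pieces reduces its estimation to bounding the quadrilinear form
\begin{equation*}
  \mathcal{I}
  = \int_{S_T} \bigl(\square^{-1}\Innerprod{\boldsymbol\alpha_\mu\psi_1}{\psi_2}\bigr)
  \Innerprod{\boldsymbol\alpha^\mu \psi_3}{\psi_4}\, dt\,dx
\end{equation*}
by $\prod_{i=1}^4 \norm{\psi_i}_{X^{s_i,b_i}_{\pm_i}(S_T)}$ for admissible exponents. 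The Dirac projectors on all four spinors combined with \eqref{B:17} furnish angular cancellation on each of the pairs $(1,2)$ and $(3,4)$; together with the angular information in the propagator $\square^{-1}$, which measures how far $\xi_1+\xi_2 = -(\xi_3+\xi_4)$ is from the light cone, this yields cancellations controlled by all six angles between the four spatial frequencies --- the complete system null structure advertised in the introduction.

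The main obstacle, and presumably the technical bulk of the paper, is closing the estimate for $\mathcal{I}$ at the scale-invariant regularity $s = 0$ up to a logarithmic loss, so that any $s > 0$ works. My strategy is a full dyadic Littlewood-Paley decomposition in both spatial frequency and modulation on each of the four spinors and on the propagator, followed by an angular decomposition of $\R^3$ into spherical caps whose sizes are adapted to the geometry of interacting characteristic cones. The six-angle gain is then redistributed among the resulting bilinear sub-interactions, where one applies the $L^2$-bilinear Strichartz estimates of Foschi-Klainerman together with the modulation- and angle-weighted refinements in \cite{Selberg:2008a}. Summing the dyadic pieces costs a logarithm which is absorbed into the $T^\delta$ factor for $s > 0$; this is exactly where the strict inequality is needed. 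Uniqueness in the class \eqref{A:104} is a byproduct of the contraction mapping, while persistence $\psi \in C([-T,T];H^s)$ comes from the embedding $X^{s,b}_\pm(S_T)\hookrightarrow C([-T,T];H^s)$ for $b > \half$.
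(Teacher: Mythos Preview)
Your overall architecture matches the paper's: part (a) by the explicit Fourier construction (this is exactly Lemma \ref{P:Lemma}), parts (b)--(c) by iteration in $X^{s,1/2+\varepsilon}_\pm(S_T)$, reduction to the trilinear estimate \eqref{C:10} and the quadrilinear estimate \eqref{C:12}, then dyadic/angular decomposition and the bilinear toolbox from \cite{Foschi:2000} and \cite{Selberg:2008a}.

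Two points where your description would lead you astray if taken literally. First, the trilinear term $A_\mu^{\mathrm{hom.}}\boldsymbol\alpha^\mu\psi$ is \emph{not} reducible to a classical spinorial null form via \eqref{B:17} alone: the commutation identity \eqref{D:54} leaves a remainder $e_1^j\mathbf I$, and the paper (Section \ref{S}) needs the additional Lorenz-gauge relation $\xi_0^j g_j^{\pm_0}(\xi_0) \simeq \angles{\xi_0}^{s-1/2}\widehat{\abs{\psi_0}^2}(\xi_0)$ together with a $\min(\theta_{01},\theta_{02})$ term to close; see \eqref{S:40}. Second, the six-angle cancellation in the quadrilinear form comes entirely from the Dirac algebra acting on the symbol $q_{1234}$ (Lemma \ref{D:Lemma2}), not from the propagator $\square^{-1}$; the latter only contributes the hyperbolic weights $L_0,L_0'$ via Lemma \ref{C:Lemma1}. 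The logarithmic loss sits in the dyadic estimate (Theorem \ref{C:Thm2}) as $\log\angles{L_0}$ and is absorbed not into $T^\delta$ but into the $\varepsilon$-slack in the $L$-exponents during the summation of Section \ref{Y}.
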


The proof is by iteration in the space \eqref{A:104}, so the main challenge is to prove closed estimates in this space; then existence and uniqueness follow by standard arguments (which we do not repeat here), as do persistence of higher regularity and continuous dependence of the solution on the data, which we did not explicitly include in statement of Theorem \ref{A:Thm1}. (The latter two properties guarantee that our solutions are limits of smooth solutions, a fact which was used above to reduce the Lorenz gauge condition to constraints on the initial data.)

Since $\psi$ and $A_\mu^{\mathrm{hom.}}$ have very little regularity, it is far from obvious that the nonlinear terms in \eqref{A:70} make sense as distributions (so that it is meaningful to talk about a solution of \eqref{A:70}). The fact that they do make sense follows from the very estimates that we use to close the iteration.

Let us mention some earlier results for M-D. Local existence of smooth solutions was proved by Gross \cite{Gross:1966}. Georgiev \cite{Georgiev:1991} proved global existence for small, smooth data. Bournaveas \cite{Bournaveas:1996} proved local well-posedness for data \eqref{A:100} with $s > 1/2$; this was improved to $s=1/2$ by Masmoudi and Nakanishi \cite{Masmoudi:2003}.

We remark also that our work leaves open the important question whether M-D is well-posed (globally, for small-norm data) for some scale invariant data space.

Having obtained the solution $\psi$ of \eqref{A:70}, we can immediately construct the full four-potential by defining $A_\mu^{\mathrm{inh.}}$ as in \eqref{A:64}. This has poor regularity properties, however, due to the lack of null structure in the right hand side of \eqref{A:64}, and the fact that $\psi$ only has slightly more than $L^2$ regularity. We do prove that
\begin{equation}\label{A:110}
  A_\mu^{\mathrm{inh.}} \in C([-T,T];H^{s-1/2}),
\end{equation}
but this is a full degree lower than the regularity of the data for $A_\mu$ (cf.\ \eqref{A:102}).

But the regularity of $A_\mu$ is not of interest; what matters is the electromagnetic field $(\mathbf E, \mathbf B)$, and this turns out to have much better regularity properties, due to the structure of Maxwell's equations. In fact, the data regularity (see \eqref{A:100}) persists throughout the time interval of existence, as our second main result shows:

\begin{theorem}\label{A:Thm2} Assume that the hypotheses of Theorem \ref{A:Thm1} are satisfied, and let $\psi$ be the solution of \eqref{A:70} on $S_T = (-T,T) \times \R^3$ obtained in that theorem. Then there exists a unique solution
$$
  (\mathbf E, \mathbf B) \in C([-T,T];H^{s-1/2})
$$
of Maxwell's equations \eqref{A:2}, with the Dirac four-current \eqref{A:10} induced by $\psi$, and with data as in \eqref{A:100}. Moreover, $A_\mu^{\mathrm{inh.}}$, defined by \eqref{A:64}, has the regularity \eqref{A:110}.
\end{theorem}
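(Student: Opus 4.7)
The plan is to split the argument in two: bound the inhomogeneous potential $A_\mu^{\mathrm{inh.}}$ via a bilinear estimate on the Dirac current $J_\mu = \innerprod{\boldsymbol\alpha_\mu\psi}{\psi}$, and then construct the electromagnetic field $(\mathbf{E},\mathbf{B})$ directly from Maxwell's equations \eqref{A:2} as a first-order symmetric hyperbolic system driven by $J^\mu$. I would \emph{not} try to obtain $\mathbf{E},\mathbf{B}$ by differentiating $A_\mu$ via \eqref{A:6}, since $A_\mu^{\mathrm{inh.}}\in H^{s-1/2}$ would fall a full derivative short of the target.

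For the first step, after the projection decomposition \eqref{B:12} I would write $J_\mu$ as a sum of four bilinear pieces $\innerprod{\boldsymbol\alpha_\mu\psi_{\pm_1}}{\psi_{\pm_2}}$ and prove
\[
  \bignorm{\square^{-1}J_\mu}_{C([-T,T];H^{s-1/2})}
  \lesssim \Bigl(\norm{\psi_+}_{X_+^{s,b}(S_T)} + \norm{\psi_-}_{X_-^{s,b}(S_T)}\Bigr)^{2}.
\]
For mixed-sign pairings the spinorial null factor \eqref{B:17} inserts an angle $\vangle(\xi_1,\xi_2)$ between the interacting spatial frequencies, compensating the loss from inverting $\square$. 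For same-sign pairings, which do not benefit from this factor, I would rely on the standard bilinear $L^2$ estimates for free waves together with the dyadic frequency and angular decompositions mentioned in the introduction. Both cases are bilinear versions of the trilinear bound on $\mathcal N$ from \eqref{A:72} that underlies the iteration in Theorem \ref{A:Thm1}; heuristically, one peels the outer factor $\boldsymbol\alpha^\mu\psi_3$ off by duality. The display above is precisely \eqref{A:110}.

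With $J^\mu \in L^1([-T,T];H^{s-1/2})$ in hand, I would then solve the first-order Maxwell system
\[
  \partial_t \mathbf{B} + \nabla \times \mathbf{E} = 0, \qquad
  \partial_t \mathbf{E} - \nabla \times \mathbf{B} = -\mathbf{J},
\]
with Cauchy data $(\mathbf{E}_0,\mathbf{B}_0) \in H^{s-1/2}$. The standard $H^{s-1/2}$ energy estimate for this symmetric hyperbolic system produces the unique $(\mathbf{E},\mathbf{B}) \in C([-T,T];H^{s-1/2})$ and, applied to the difference of two putative solutions, gives uniqueness in that class. The two constraint equations in \eqref{A:2} are preserved in time because they hold at $t=0$ by \eqref{A:32} and because charge conservation $\partial_t\rho + \nabla\cdot\mathbf{J} = 0$ follows from \eqref{A:4} using the hermiticity of the $\boldsymbol\alpha^\mu$. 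Consistency with $\mathbf{E} = \nabla A_0 - \partial_t \mathbf{A}$ and $\mathbf{B} = \nabla\times\mathbf{A}$ is a distributional identity that is exact on smooth approximants and passes to the limit by the continuous dependence built into Theorem \ref{A:Thm1}.

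The hard part is the bilinear estimate in Step 1 at the almost-critical regularity: any gain over a naive Sobolev product on $\fixedabs{\psi}^2$ uniformly on the time-slab must come from the spinorial cancellation \eqref{B:17} for mixed signs and from the modulation-based cancellations for same signs, which are exactly the mechanisms developed in the main body of the paper. In this sense Theorem \ref{A:Thm2} is essentially a byproduct of the machinery assembled for Theorem \ref{A:Thm1}, and the actual proof amounts to extracting the relevant bilinear bound from those iteration estimates and combining it with a standard well-posedness argument for the first-order Maxwell system.
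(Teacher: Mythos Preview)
There is a genuine gap at the transition from your Step 1 to your Step 2. Your Step 1 produces $\square^{-1}J_\mu\in C([-T,T];H^{s-1/2})$, which is exactly \eqref{A:110}, but this does \emph{not} give $J_\mu\in L^1([-T,T];H^{s-1/2})$; via the energy inequality it corresponds only to $J_\mu\in L^1_t H^{s-3/2}_x$, one full derivative weaker. And in fact $J_\mu\in L^1_t H^{s-1/2}_x$ is not available for $s$ near $0$: the component $J^0=\fixedabs{\psi}^2$ carries no spinorial cancellation whatsoever (since $\boldsymbol\alpha^0=\mathbf I_{4\times4}$), and the ``standard bilinear $L^2$ estimates'' you invoke for same-sign pairings do not close at this regularity---test the low-output regime $\fixedabs{\xi_0}\ll\fixedabs{\xi_1}\sim\fixedabs{\xi_2}$ with $s$ small. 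So the symmetric-hyperbolic energy argument for the first-order Maxwell system cannot be fed the source at the level you need.

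The paper circumvents this by working not with $J_\mu$ but directly with the field tensor $F_{\kappa\lambda}=\partial_\kappa A_\lambda-\partial_\lambda A_\kappa$, which satisfies $\square F_{\kappa\lambda}=-\partial_\kappa J_\lambda+\partial_\lambda J_\kappa$. The decisive new ingredient is that this \emph{antisymmetric} combination of derivatives of the current has a null structure of its own (Lemma~\ref{N:Lemma}): after dividing by $\fixedabs{\xi_0}$ the symbol is controlled by $\theta_{12}+\min(\theta_{01},\theta_{02})$ plus a harmless remainder involving $\fixedabs{\tau_0\pm_0\fixedabs{\xi_0}}/\fixedabs{\xi_0}$. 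This angular gain---which is absent from the individual components $J_\mu$---is precisely what allows the $H^{s-1/2}$ bound on $F_{\kappa\lambda}^{\mathrm{inh.}}$ to close via the bilinear machinery of section~\ref{J}. Your plan misses this mechanism; it is not a byproduct of the trilinear estimate for $\mathcal N$, but a separate null-form computation specific to the curl structure of Maxwell's equations. As an aside, \eqref{A:110} itself requires no null structure at all: the paper obtains it from the elementary Sobolev product bound $\fixednorm{\abs{D}^{-1}(f_1 f_2)}_{H^{s-1/2}}\lesssim\fixednorm{f_1}\fixednorm{f_2}_{H^s}$.
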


\section{Notation}\label{B}

\subsection{Absolute constants} In estimates we use the shorthand $X \lesssim Y$ for $X \le CY$, where $C \gg 1$ is some absolute constant; $X=O(R)$ is short for $\abs{X} \lesssim R$; $X \sim Y$ means $X \lesssim Y \lesssim X$; $X \ll Y$ stands for $X \le C^{-1} Y$, with $C$ as above. We write $\simeq$ for equality up to multiplication by an absolute constant (typically factors involving $2\pi$).  Constants which are not absolute are always denoted explicitly, often with the parameters they depend on as subscripts or arguments.

\subsection{Fourier transforms and multipliers} We write
\begin{align*}
  \mathcal F_x f(\xi) = \widehat f(\xi) &= \int_{\R^3} e^{-ix\cdot\xi} f(x) \, dx,
  \\
  \mathcal F u(\tau,\xi) = \widetilde u(\tau,\xi) &= \int_{\R^{1+3}} e^{-i(t\tau+x\cdot\xi)} u(t,x) \, dt \, dx,
\end{align*}
where $\tau \in \R$ and $\xi \in \R^3$; we call $\xi$ the spatial frequency. We write $X=(\tau,\xi)$, and multiple frequencies are numbered by subscript, as in $X_j = (\tau_j,\xi_j)$. Coordinates, on the other hand, are always denoted by superscripts, as in $\xi_j = (\xi_j^1,\xi_j^2,\xi_j^3)$.

If $A \subset \R^3, B \subset \R^{1+3}$, then $\Proj_A, \Proj_B$ are the multipliers given by
$$
  \widehat{\Proj_A f}(\xi) = \chi_A(\xi)\widetilde f(\xi),
  \qquad
  \widetilde{\Proj_B u}(X) = \chi_B(X)\widetilde u(X),
$$
where $\chi_A, \chi_B$ are the characteristic functions of $A, B$. To simplify the notation when a set is given by some condition, we also write, for example, $\Proj_{\angles{\xi} \sim N}$ and $\chi_{\angles{\xi} \sim N}$ instead of $\Proj_{\{ \xi \colon \angles{\xi} \sim N \}}$ and $\chi_{\{ \xi \colon \angles{\xi} \sim N \}}$.

Let $D=-i\nabla$, where $\nabla=(\partial_1,\partial_2,\partial_3)$. Given $h : \R^3 \to \C$, we denote by $h(D)$ the multiplier given by $\widehat{h(D)f}(\xi) = h(\xi)\widehat f(\xi)$.

\subsection{Bilinear interactions}

Note the convolution identity
\begin{equation}\label{B:2}
  \widetilde{u_1\overline{u_2}}(X_0)
  \simeq \int
  \widetilde{u_1}(X_1)\,
  \widetilde{u_2}(X_2)
  \, d\mu^{12}_{X_0},
  \quad
  d\mu^{12}_{X_0} \equiv \delta(X_0-X_1+X_2) \, dX_1 \, dX_2.
\end{equation}
Here $\delta$ is the point mass at zero, hence
\begin{equation}\label{B:2:2}
  X_0 = X_1 - X_2 
  \qquad
  \left( \iff \text{$\tau_0 = \tau_1-\tau_2$ and $\xi_0 = \xi_1-\xi_2$} \right),
\end{equation}
motivating the following terminology: A triple $(X_0,X_1,X_2)$ of vectors in $\R^{1+3}$ is said to be a \emph{bilinear interaction} if \eqref{B:2:2} is verified.

Similarly, for a product without conjugation (then $X_0 = X_1 + X_2$)
\begin{equation}\label{B:2:4}
  \widetilde{u_1u_2}(X_0)
  \simeq \int \widetilde{u_1}(X_1)\widetilde{u_2}(X_2) \, d\nu^{12}_{X_0},
  \quad
  d\nu^{12}_{X_0} \equiv \delta(X_0-X_1-X_2) \, dX_1 \, dX_2.
\end{equation}

\subsection{$L^p$ and Sobolev norms}

All $L^p$ norms are taken with respect to Lebesgue measure. In fact, we use almost exclusively $L^2$ norms, hence we reserve the notation $\norm{\cdot}$ for the $L^2$ norm over $\R^n$, where $n=1$, $3$ or $1+3$, depending on the context. For example, if $u=u(t,x)$ is a space-time function, then $\norm{u}$ is understood to be taken over $\R^{1+3}$. If we are taking the norm of an expression, we indicate by a subscript which variable or variables the norm is taken over, as in $\fixednorm{F(\tau,\xi)}_{L^2_{\tau,\xi}}$. The $n$-dimensional Lebesgue-measure of a set $A \subset \R^n$ is denoted $\abs{A}$, where the value of $n$ will always be clear from the context.

Let $H^s$ be the completion of $\mathcal S(\R^3)$ with respect to $\norm{f}_{H^s} = \bignorm{\angles{\xi}^s\widehat f\,}_{L^2_\xi}$. Here, and throughout the paper, we use the shorthand
$$
  \angles{\xi} = \left(1 + \abs{\xi}^2\right)^{1/2}.
$$
An alternative, direct characterization is $H^s = \mathcal F_x^{-1} L^2 \left( \angles{\xi}^{2s} \, d\xi \right)$. In the statement of Theorem \ref{A:Thm1} we also refer to the space $\abs{D}^{-1} H^s$, which is defined by
$$
  \abs{D}^{-1} H^s
  = \mathcal F_x^{-1} \left\{ \frac{\widehat g(\xi)}{\abs{\xi}\angles{\xi}^s} \colon g \in L^2(\R^3) \right\}
  = \mathcal F_x^{-1} L^2 \left( \abs{\xi}^2\angles{\xi}^{2s} \, d\xi \right)
$$
with norm $\bignorm{\abs{\xi}\angles{\xi}^s\widehat f\,}_{L^2_\xi}$. Equivalently, $\abs{D}^{-1} H^s$ is the completion of $\mathcal S(\R^3)$ with respect to this norm.

With this definition, the regularity statement in part (a) of Theorem \ref{A:Thm1}  holds by the following lemma (with $f=0$ and $\mathbf u = \mathbf B_0$):

\begin{lemma}\label{P:Lemma}
Let $s \in \R$ and assume that $f \in H^s(\R^3;\R)$ and $\mathbf u \in H^s(\R^3,\R^3)$, with $\nabla \cdot \mathbf u = 0$. Then there exists a unique $\mathbf v \in \abs{D}^{-1}H^s( \R^3; \R^3)$ such that
\begin{equation}\label{P:2}
  \nabla \cdot \mathbf v = f,
  \qquad
  \nabla \times \mathbf v = \mathbf u.
\end{equation}
Moreover, $\partial_j v^k \in H^s(\R^3;\R)$ for $j,k=1,2,3$.
\end{lemma}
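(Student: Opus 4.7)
The plan is to solve the system \eqref{P:2} explicitly on the Fourier side, using the two standard vector-calculus identities $\xi \cdot (\xi \times \mathbf w) = 0$ and $\xi \times (\xi \times \mathbf w) = \xi(\xi \cdot \mathbf w) - \abs{\xi}^2 \mathbf w$. First I would rewrite \eqref{P:2} in Fourier as $i\xi \cdot \widehat{\mathbf v} = \widehat f$ and $i\xi \times \widehat{\mathbf v} = \widehat{\mathbf u}$, then take the cross product of the second equation with $\xi$ and use the BAC--CAB identity together with the first equation to eliminate $\widehat{\mathbf v}$ from the right hand side. This yields the explicit formula
\begin{equation*}
  \widehat{\mathbf v}(\xi) = i\,\frac{\xi \times \widehat{\mathbf u}(\xi) - \xi\,\widehat f(\xi)}{\abs{\xi}^2},
\end{equation*}
which will be the candidate solution.

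Next I would verify that this $\widehat{\mathbf v}$ actually satisfies both equations. Substituting back, $i\xi \cdot \widehat{\mathbf v} = -(\xi \cdot(\xi \times \widehat{\mathbf u}) - \abs{\xi}^2 \widehat f)/\abs{\xi}^2 = \widehat f$, using that $\xi \cdot (\xi \times \widehat{\mathbf u}) = 0$. For the curl, $i\xi \times \widehat{\mathbf v} = -(\xi \times (\xi \times \widehat{\mathbf u}))/\abs{\xi}^2 = \widehat{\mathbf u} - \xi(\xi\cdot\widehat{\mathbf u})/\abs{\xi}^2 = \widehat{\mathbf u}$, where the last step uses precisely the hypothesis $\nabla\cdot\mathbf u = 0$, i.e., $\xi \cdot \widehat{\mathbf u}=0$. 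I would also check the Hermitian symmetry $\widehat{\mathbf v}(-\xi) = \overline{\widehat{\mathbf v}(\xi)}$ (inherited from the reality of $f$ and $\mathbf u$) so that $\mathbf v$ is real-valued.

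For the regularity, I would bound pointwise $\abs{\xi}\abs{\widehat{\mathbf v}(\xi)} \lesssim \abs{\widehat{\mathbf u}(\xi)} + \abs{\widehat f(\xi)}$, whence
\begin{equation*}
  \bignorm{\abs{\xi}\angles{\xi}^s \widehat{\mathbf v}}_{L^2_\xi} \lesssim \norm{\mathbf u}_{H^s} + \norm{f}_{H^s},
\end{equation*}
giving $\mathbf v \in \abs{D}^{-1}H^s$. The same pointwise estimate shows $\abs{\xi_j\widehat{v}^k} \lesssim \abs{\widehat{\mathbf u}} + \abs{\widehat f}$, hence $\partial_j v^k \in H^s$. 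Uniqueness is immediate: the difference $\mathbf w$ of two solutions satisfies $\xi\cdot\widehat{\mathbf w}=0$ and $\xi\times\widehat{\mathbf w}=0$ for a.e.\ $\xi$, forcing $\widehat{\mathbf w}(\xi)=0$ a.e., and hence $\mathbf w = 0$ in $\abs{D}^{-1}H^s$.

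There is no real obstacle: the only point needing even minimal care is that the factor $1/\abs{\xi}^2$ is harmless near the origin because the numerator vanishes linearly in $\xi$, so $\widehat{\mathbf v}$ is locally integrable of the right weighted class. The hypothesis $\nabla\cdot\mathbf u=0$ is essential — without it, the formula would still solve the divergence equation but the curl equation would only hold up to the Leray projection of $\mathbf u$.
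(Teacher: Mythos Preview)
Your proof is correct and follows essentially the same approach as the paper: both derive the explicit Fourier formula $\widehat{\mathbf v}(\xi) \propto (\xi \times \widehat{\mathbf u}(\xi) - \xi\,\widehat f(\xi))/\abs{\xi}^2$, verify it solves \eqref{P:2} using $\xi\cdot\widehat{\mathbf u}=0$, read off the $\abs{D}^{-1}H^s$ regularity from the pointwise bound, and deduce uniqueness from the fact that $\widehat{\mathbf w}$ is a genuine $L^2$-weighted function vanishing a.e. The only cosmetic difference is that the paper arrives at the formula via the identity $\nabla\times\nabla\times\mathbf v = \nabla(\nabla\cdot\mathbf v) - \Delta\mathbf v$ (reducing to a Poisson equation), whereas you obtain it directly from the BAC--CAB identity on the Fourier side; your route is slightly more direct, and your remarks on real-valuedness and the necessity of the divergence-free hypothesis are welcome additions.
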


\begin{proof}
The identity $\nabla \times \nabla \times \mathbf v = \nabla (\nabla \cdot \mathbf v) - \Delta \mathbf v$ tells us that $\mathbf v$ must solve the Poisson equation $\Delta \mathbf v = - \nabla \times \mathbf u + \nabla f$. We therefore define $\mathbf v$ in Fourier space:
$$
  \widehat{\mathbf v}(\xi) = \frac{ \xi \times \widehat{\mathbf u}(\xi)}
  {\abs{\xi}^2} - \frac{\xi}{\abs{\xi}^2} \widehat f(\xi).
$$
Then clearly, $\mathbf v \in \abs{D}^{-1}H^s( \R^3; \R^3)$ (see section \ref{B} for the definition of this space) and it is easy to check that \eqref{P:2} is satisfied (here the assumption $\nabla \cdot \mathbf u = 0$ is needed). The uniqueness reduces to the fact that if $w \in \abs{D}^{-1}H^s$ and $\Delta w = 0$, then $w=0$. To prove this, note that $\abs{\xi}^2 \widehat w(\xi) = 0$ in the sense of tempered distributions. But since $w \in \abs{D}^{-1}H^s$, we know that $\widehat w$ is a measurable function, and it follows that $\widehat w = 0$ pointwise a.e., hence $w=0$.
\end{proof}

\subsection{Angles} Let $\vangle(a,b)$ be the angle between nonzero $a,b \in \R^3$. Then (see \cite{Selberg:2008a})
\begin{align}
  \label{B:4}
  \abs{a}+\abs{b}-\abs{a+b} &\sim \min(\abs{a},\abs{b}) \vangle(a,b)^2,
  \\
  \label{B:6}
  \abs{a-b}-\bigabs{\abs{a}-\abs{b}} &\sim \frac{\abs{a}\abs{b}}{\abs{a-b}} \vangle(a,b)^2 \qquad(a \neq b).
\end{align}

\subsection{Special sets}

The characteristic set of the operator $-i\partial_t\pm\abs{D}$ appearing in \eqref{B:18} is the null cone component
\begin{equation}\label{B:40}
  K^\pm = \left\{ (\tau,\xi) \in \R^{1+3} : \tau\pm\abs{\xi} = 0 \right\}.
\end{equation}
The union $K = K^+ \cup K^-$ is the full null cone; we say that a vector $X \in \R^{1+3}$ is \emph{null} if it belongs to $K$. The geometry of interactions of null cones plays a fundamental role in the analysis of the system \eqref{B:18}.

For $N,L \ge 1$, $r,\gamma > 0$ and $\omega \in \mathbb S^2$, where $\mathbb S^2 \subset \R^3$ is the unit sphere, define
\begin{align}
  \label{B:101}
  \Gamma_\gamma(\omega) &= \left\{ \xi \in \R^3 \colon \theta(\xi,\omega) \le \gamma \right\}
  \\
  \label{B:108}
  T_r(\omega) &= \left\{ \xi \in \R^3 \colon \abs{P_{\omega^{\perp}} \xi} \lesssim r \right\},
  \\
  \label{B:102}
  K^\pm_{N,L}
  &= \left\{ (\tau,\xi) \in \R^{1+3} \colon \angles{\xi} \sim N, \; \angles{\tau\pm\abs{\xi}} \sim L \right\},
  \\
  \label{B:104}
  K^{\pm}_{N,L,\gamma,\omega}
  &= \left\{ (\tau,\xi) \in \R^{1+3} \colon \angles{\xi} \sim N, \; \pm\xi \in \Gamma_{\gamma}(\omega), \; \angles{\tau\pm\abs{\xi}} \sim L \right\},
  \\
  \label{B:106}
  H_d(\omega) &= \left\{ (\tau,\xi) \in \R^{1+3} \colon \abs{\tau+\xi\cdot\omega} \lesssim d \right\},
\end{align}
where $P_{\omega^{\perp}}$ denotes the projection onto the orthogonal complement $\omega^\perp$ of $\omega$ in $\R^3$. Thus, $\Gamma_\gamma(\omega)$ is a conical sector around $\omega$, $T_r(\omega)$ is a tube of radius comparable to $r$ around $\R \omega$, $K^\pm_{N,L}$ and $K^{\pm}_{N,L,\gamma,\omega}$ consist of pieces of thickened null cones, and $H_d(\omega)$ is an $O(d)$-thickening of the null hyperplane $\tau+\xi\cdot\omega = 0$. Implicit absolute constants are used to make the notation more flexible.

Clearly,
\begin{equation}\label{B:110}
  K^{\pm}_{N,L,\gamma,\omega}
  \subset \R \times T_{N\gamma}(\omega).
\end{equation} 
We also claim that
\begin{equation}\label{B:112}
  K^{\pm}_{N,L,\gamma,\omega}
  \subset H_{\max(L,N\gamma^2)}(\omega).
\end{equation}
Indeed, if $(\tau,\xi) \in K^{\pm}_{N,L,\gamma,\omega}$, then $\tau+\xi \cdot \omega$ equals
$$
  \left(\tau\pm\abs{\xi}\right)
  - \left( \pm \abs{\xi} - \xi \cdot \omega \right)
  = O(L)
  - \frac{\abs{\xi}^2\bigl(1-\cos^2\theta(\pm\xi,\omega)\bigr)}{\pm\left(\abs{\xi}\pm\xi \cdot \omega\right)}
  = O(L) + O(N\gamma^2),
$$
where we used the fact that $\theta(\pm\xi,\omega) \le \gamma < 1$, hence $\pm\xi \cdot \omega \ge 0$.

\subsection{Angular decompositions}

Given $\gamma \in (0,\pi]$ and $\omega \in \mathbb S^2$, define $\Gamma_\gamma(\omega)$ as in \eqref{B:101}. For the purpose of decomposing $\R^3$ into such sectors without too much overlap, let $\Omega(\gamma)$ be a maximal $\gamma$-separated subset of the unit sphere $\mathbb S^2$. Then
\begin{equation}\label{B:66}
  1 \le \sum_{\omega \in \Omega(\gamma)} \chi_{\Gamma_\gamma(\omega)}(\xi)
  \le 5^2
  \qquad (\forall \xi \neq 0),
\end{equation}
where the left inequality holds by the maximality of $\Omega(\gamma)$, and the right inequality by the $\gamma$-separation, since the latter implies (we omit the proof):

\begin{lemma}\label{B:Lemma3} For $k \in \N$ and $\omega \in \mathbb S^2$,
$\#\left\{ \omega' \in \Omega(\gamma) : \vangle(\omega',\omega) \le k\gamma \right\}\le (2k+1)^2$.
\end{lemma}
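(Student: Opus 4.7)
The plan is a standard spherical-cap packing argument on $\mathbb{S}^2$. I would let $N$ denote the cardinality to be bounded and enumerate the relevant elements as $\omega_1',\dots,\omega_N'$. The key observation is that angular distance on $\mathbb{S}^2$ coincides with the geodesic distance (arc-length along the great circle), so it is a bona fide metric; the $\gamma$-separation of $\Omega(\gamma)$ therefore forces the open caps $C_j = \{\eta \in \mathbb{S}^2 : \vangle(\eta, \omega_j') < \gamma/2\}$ to be pairwise disjoint, while the triangle inequality places each $C_j$ inside $C = \{\eta \in \mathbb{S}^2 : \vangle(\eta, \omega) < (2k+1)\gamma/2\}$. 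The bound $N \le |C|/|C_1|$ then follows from $\sum_j |C_j| \le |C|$ and the rotation invariance of spherical area.

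Next I would evaluate this area ratio via the standard formula $\area\{\eta : \vangle(\eta, \omega_0) < r\} = 4\pi \sin^2(r/2)$, obtaining
$$
N \le \frac{\sin^2((2k+1)\gamma/4)}{\sin^2(\gamma/4)},
$$
which the elementary inequalities $\sin x \le x$ and $\sin x \ge (2/\pi)\, x$ on $[0, \pi/2]$ reduce to $N \lesssim (2k+1)^2$. To extract the explicit constant $(2k+1)^2$ as stated, I would re-run the packing argument in the tangent plane $\omega^\perp$: the exponential map at $\omega$ is a near-isometry on the relevant scale, the spherical caps linearize to Euclidean disks, and the ratio of areas of concentric planar disks of radii $(2k+1)\gamma/2$ and $\gamma/2$ is precisely $(2k+1)^2$. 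For $k\gamma$ comparable to $1$ or larger, the conclusion is automatic from the crude total estimate $\#\Omega(\gamma) \lesssim \gamma^{-2}$.

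The argument has no real analytic content; the only minor obstacle is bookkeeping the constants. The authors have tidied the final bound into the convenient form $(2k+1)^2$, but in the downstream applications the precise value is absorbed into the $\lesssim$-conventions anyway.
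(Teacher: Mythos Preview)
The paper omits the proof entirely, so there is nothing to compare against; your packing argument via disjoint $\gamma/2$-caps inside the $(2k+1)\gamma/2$-cap is the natural one and is correct.

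One small refinement: you do not need the tangent-plane linearization (which, as you note, only gives a near-isometry and hence not literally the constant $(2k+1)^2$). Your area ratio
\[
N \le \frac{\sin^2\bigl((2k+1)\gamma/4\bigr)}{\sin^2(\gamma/4)}
\]
already yields the exact bound, because $\lvert\sin(mx)\rvert \le m\,\lvert\sin x\rvert$ for every positive integer $m$ (immediate by induction from the addition formula). With $m = 2k+1$ and $x = \gamma/4$ this gives $N \le (2k+1)^2$ on the nose, with no case analysis for large $k\gamma$ needed. Your final remark that the exact constant is immaterial downstream is of course also correct.
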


The following will be used for angular decomposition in bilinear estimates.

\begin{lemma}\label{B:Lemma4} We have
$$
  1 \sim \sum_{\genfrac{}{}{0pt}{1}{0 < \gamma < 1}{\text{$\gamma$ \emph{dyadic}}}} 
  \sum_{\genfrac{}{}{0pt}{1}{\omega_1,\omega_2 \in \Omega(\gamma)}{3\gamma \le \vangle(\omega_1,\omega_2) \le 12\gamma}}
  \chi_{\Gamma_\gamma(\omega_1)}(\xi_1) \chi_{\Gamma_\gamma(\omega_2)}(\xi_2),
$$
for all $\xi_1,\xi_2 \in \R^3 \setminus \{0\}$ with $\vangle(\xi_1,\xi_2) > 0$.
\end{lemma}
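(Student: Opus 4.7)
The plan is to exhibit the double sum as a Whitney-type angular decomposition localized to the single dyadic scale $\gamma \sim \alpha$, where $\alpha := \vangle(\xi_1,\xi_2) > 0$. Set $\eta_i = \xi_i/\abs{\xi_i} \in \mathbb S^2$; then $\xi_i \in \Gamma_\gamma(\omega_i)$ is the same as $\vangle(\omega_i,\eta_i) \le \gamma$, so the entire argument takes place on the unit sphere, where $\vangle$ is a genuine (geodesic) metric and the triangle inequality is available.

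For the upper bound, suppose the summand at scale $\gamma$ indexed by $(\omega_1,\omega_2)$ is nonzero; then $\vangle(\omega_i,\eta_i) \le \gamma$ for $i=1,2$ and $3\gamma \le \vangle(\omega_1,\omega_2) \le 12\gamma$. The spherical triangle inequality gives
$$
  \bigabs{\alpha - \vangle(\omega_1,\omega_2)} \le \vangle(\omega_1,\eta_1) + \vangle(\omega_2,\eta_2) \le 2\gamma,
$$
which combined with the bracket on $\vangle(\omega_1,\omega_2)$ forces $\alpha/14 \le \gamma \le \alpha$. Hence only $O(1)$ dyadic scales $\gamma$ contribute. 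At each such scale, Lemma \ref{B:Lemma3} (applied with $k=1$ to each of $\eta_1,\eta_2 \in \mathbb S^2$) bounds the number of admissible $\omega_i \in \Omega(\gamma)$ by $9$, so the double sum is $O(1)$.

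For the lower bound, choose a dyadic $\gamma$ lying in $[\alpha/10,\alpha/5]$; such a $\gamma$ exists because the interval has ratio $2$, and since $\alpha \le \pi$ one automatically has $\gamma < 1$. By the maximality of $\Omega(\gamma)$ there exist $\omega_i \in \Omega(\gamma)$ with $\vangle(\omega_i,\eta_i) \le \gamma$ (else $\Omega(\gamma) \cup \{\eta_i\}$ would be a strictly larger $\gamma$-separated subset of $\mathbb S^2$). The triangle inequality then yields
$$
  3\gamma \le 5\gamma - 2\gamma \le \alpha - 2\gamma \le \vangle(\omega_1,\omega_2) \le \alpha + 2\gamma \le 10\gamma + 2\gamma = 12\gamma,
$$
so this pair is admissible at scale $\gamma$ and contributes $1$ to the double sum.

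No substantive obstacle is expected; the only point requiring care is arithmetic compatibility of the numerical constants. The separation window $[3\gamma,12\gamma]$ must be wide enough to absorb the $2\gamma$ slack produced by the triangle inequality on \emph{both} sides, which is exactly what the factor-of-$4$ gap between $3$ and $12$ provides: it simultaneously guarantees that some dyadic $\gamma$ in $[\alpha/10,\alpha/5]$ yields an admissible pair (lower bound) and that the admissibility window $\alpha/14 \le \gamma \le \alpha$ contains only $O(1)$ dyadic scales (upper bound).
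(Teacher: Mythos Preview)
Your proof is correct and is precisely the natural Whitney-type argument one expects; the paper itself omits the proof of this lemma as ``straightforward,'' so there is nothing to compare against, but your reasoning would serve as the omitted details. The arithmetic checks out: the triangle inequality pins $\gamma$ to the range $[\alpha/14,\alpha]$ for the upper bound, and the choice $\gamma \in [\alpha/10,\alpha/5]$ cleanly hits the window $[3\gamma,12\gamma]$ for the lower bound.
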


We omit the straightforward proof. The condition $\vangle(\omega_1,\omega_2) \ge 3\gamma$ implies that the minimum angle between vectors in $\Gamma_{\gamma}(\omega_1)$ and $\Gamma_{\gamma}(\omega_2)$ is greater than or equal to $\gamma$, so the sectors are well-separated. If separation is not needed, the following variation may be preferable (again, we skip the easy proof):

\begin{lemma}\label{B:Lemma5} For any $0 < \gamma < 1$ and $k \in \N$,
$$
  \chi_{\vangle(\xi_1,\xi_2) \le k\gamma}(\xi_1,\xi_2) \lesssim \sum_{\genfrac{}{}{0pt}{1}{\omega_1,\omega_2 \in \Omega(\gamma)}{\vangle(\omega_1,\omega_2) \le (k+2)\gamma}}
  \chi_{\Gamma_\gamma(\omega_1)}(\xi_1) \chi_{\Gamma_\gamma(\omega_2)}(\xi_2),
$$
for all $\xi_1,\xi_2 \in \R^3 \setminus \{0\}$.
\end{lemma}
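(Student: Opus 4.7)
The plan is to prove the pointwise bound directly, by exhibiting for each $(\xi_1,\xi_2)$ on the left-hand side a single summand on the right-hand side which already captures it. Since the left-hand side is either $0$ or $1$, it suffices to show that whenever $\vangle(\xi_1,\xi_2) \le k\gamma$ there exists a pair $(\omega_1,\omega_2) \in \Omega(\gamma)^2$ with $\xi_i \in \Gamma_\gamma(\omega_i)$ and $\vangle(\omega_1,\omega_2) \le (k+2)\gamma$.

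First, I would invoke the maximality of $\Omega(\gamma)$ as a $\gamma$-separated subset of $\mathbb S^2$ (the same property used to establish the left inequality in \eqref{B:66}): for each $i=1,2$ there exists $\omega_i \in \Omega(\gamma)$ with $\vangle(\xi_i,\omega_i) \le \gamma$, i.e.\ $\xi_i \in \Gamma_\gamma(\omega_i)$. Then I would apply the triangle inequality for the angular distance on $\mathbb S^2$ (valid since angles between directions give a genuine metric on $\mathbb S^2$):
$$
\vangle(\omega_1,\omega_2) \le \vangle(\omega_1,\xi_1) + \vangle(\xi_1,\xi_2) + \vangle(\xi_2,\omega_2) \le \gamma + k\gamma + \gamma = (k+2)\gamma.
$$
So this pair $(\omega_1,\omega_2)$ appears in the indexing set of the sum, and the corresponding summand equals $1$. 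Hence the right-hand side is $\ge 1$, proving the inequality with implicit constant $1$.

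There is no real obstacle here: the statement is an immediate geometric triangle inequality for angles, combined with the defining maximality property of $\Omega(\gamma)$. The only (very mild) point to notice is that $\vangle$ obeys the triangle inequality as a metric on $\mathbb S^2$, which is classical. One could alternatively absorb a constant factor by appealing to both inequalities in \eqref{B:66} and summing, but the direct pointwise argument above is cleaner and gives the sharper constant $k+2$ stated in the lemma.
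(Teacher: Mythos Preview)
Your proof is correct. The paper itself omits the proof of this lemma, calling it ``easy''; your direct argument via the maximality of $\Omega(\gamma)$ and the triangle inequality for the spherical metric is precisely the intended straightforward verification.
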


\subsection{Dyadic decompositions} Later we shall bound certain multilinear integral forms in terms of the norms \eqref{B:24}, and we will use an index $j$ to number the functions appearing in the multilinear forms. We rely on a dyadic decomposition based on the size of the weights in \eqref{B:24}. Throughout, $N$'s and $L$'s (indexed by $j$) will denote dyadic numbers greater than or equal to one, i.e., they are of the form $2^m$ for some nonnegative integer $m$. We then assign sizes $\angles{\xi} \sim N$ and $\angles{\tau\pm\abs{\xi}} \sim L$ to the weights in the norm \eqref{B:24}, and again this will be indexed by $j$. We call the $N$'s and the $L$'s \emph{elliptic} and \emph{hyperbolic} weights, respectively. To have a uniform notation, we shall assume throughout that the $F$'s are arbitrary nonnegative functions in $L^2(\R^{1+3})$, and that the $u$'s are given by
\begin{equation}\label{B:200}
  \widetilde u(X) = \chi_{K^{\pm}_{N,L}}(X) F(X) \qquad (X=(\tau,\xi) \in \R^{1+3}, \; F \in L^2(\R^{1+3}), \; F \ge 0),
\end{equation}
all of which is indexed by $j$. Often we also use angular decompositions, and for this we use the shorthand, for $\gamma > 0$ and $\omega \in \mathbb S^2$,
\begin{equation}\label{B:202}
  u^{\gamma,\omega}
  = \Proj_{\pm\xi \in \Gamma_{\gamma}(\omega)} u
  \qquad
  \left( \iff
  \widetilde{u^{\gamma,\omega}}
  = \chi_{K^{\pm}_{N,L,\gamma,\omega}} F
  \right),
\end{equation}
where everything except $\gamma$ is subject to indexation by $j$ ($\gamma$ is excepted because it relates two different $j$'s, as in Lemmas \ref{B:Lemma4} and \ref{B:Lemma5}). Then by \eqref{B:66},
\begin{equation}\label{B:204}
  \norm{u}
  \sim
  \left( \sum_{\omega \in \Omega(\gamma)}
  \norm{u^{\gamma,\omega}}^2 \right)^{1/2}.
\end{equation}
Finally, we note the following fact, which is used to sum $\omega_1,\omega_2$ in an angularly decomposed bilinear estimate: If $\gamma > 0$, then
\begin{equation}\label{B:208}
\begin{aligned}
  \sum_{\genfrac{}{}{0pt}{1}{\omega_1,\omega_2 \in \Omega(\gamma)}
  {\theta(\omega_1,\omega_2) \lesssim \gamma}}
  \norm{u_1^{\gamma,\omega_1}} \norm{u_2^{\gamma,\omega_2}}
  &\le \left( \sum_{\omega_1,\omega_2}
  \norm{u_1^{\gamma,\omega_1}}^2 \right)^{1/2}
  \left( \sum_{\omega_1,\omega_2}
  \norm{u_2^{\gamma,\omega_2}}^2 \right)^{1/2}
  \\
  &\lesssim \norm{u_1}\norm{u_2}.
\end{aligned}
\end{equation}
Here we applied the Cauchy-Schwarz inequality, then Lemma \ref{B:Lemma3} and \eqref{B:204}.

\section{Main estimates, duality and time cut-off}\label{C}

We shall iterate the Maxwell-Dirac-Lorenz system \eqref{B:18} in the spaces
$$
  \psi_\pm \in X_\pm^{s,1/2+\varepsilon}(S_T),
$$
where $\varepsilon > 0$ is sufficiently small, and $0 < T \le 1$ will depend on the data norm
\begin{equation}\label{C:3}
  \mathcal I_0
  = \norm{(\psi_0,\mathbf E_0,\mathbf B_0)}_{H^s \times H^{s-1/2} \times H^{s-1/2}}.
\end{equation}
By a standard argument (see \cite{Selberg:2007d}) the problem of obtaining closed estimates for the iterates reduces to proving the nonlinear estimates
\begin{align}
  \label{C:1}
  \bignorm{\mathbf\Pi_{\pm_2}\left( A_\mu^{\mathrm{hom.}} \boldsymbol\alpha^\mu \mathbf\Pi_{\pm_1} \psi_1 \right)}_{X_{\pm_2}^{s,-1/2+2\varepsilon}(S_T)}
  &\le C
  \, \mathcal I_0 \norm{\psi_1}_{X_{\pm_1}^{s,1/2+\varepsilon}(S_T)},
  \\
  \label{C:2}
  \bignorm{\mathbf\Pi_{\pm_4}\mathcal N\bigl(\mathbf\Pi_{\pm_1}\psi_1,\mathbf\Pi_{\pm_2}\psi_2,\mathbf\Pi_{\pm_3}\psi_3\bigr)}_{X_{\pm_4}^{s,-1/2+2\varepsilon}(S_T)}
  &\le C \prod_{j=1}^3 \norm{\psi_j}_{X_{\pm_j}^{s,1/2+\varepsilon}(S_T)},
\end{align}
where $C = C_{s,\varepsilon}$. By a standard argument, it suffices to prove these without the restriction to $S_T$, for all $\psi_j \in \mathcal S(\R^{1+3})$, and of course we can insert a smooth time cut-off $\rho : \R \to [0,1]$ with $\rho(t)=1$ for $\abs{t} \le 1$ and $\rho(t)=0$ for $\abs{t} \ge 2$.

Using also the fact that the dual of $X^{s,-1/2+2\varepsilon}_\pm$ is $X^{-s,1/2-2\varepsilon}_\pm$, we then reduce \eqref{C:1} and \eqref{C:2} to the respective integral estimates
\begin{align}
  \label{C:10}
  \Abs{I^{\pm_1,\pm_2}}
  &\le C_{s,\varepsilon}
  \mathcal I_0 \norm{\psi_1}_{X_{\pm_1}^{s,1/2+\varepsilon}}
  \norm{\psi_2}_{X_{\pm_2}^{-s,1/2-2\varepsilon}},
  \\
  \label{C:12}
  \Abs{J^{\pm_1,\dots,\pm_4}}
  &\le C_{s,\varepsilon} \norm{\psi_1}_{X_{\pm_1}^{s,1/2+\varepsilon}} \norm{\psi_2}_{X_{\pm_2}^{s,1/2+\varepsilon}} \norm{\psi_3}_{X_{\pm_3}^{s,1/2+\varepsilon}} \norm{\psi_4}_{X_{\pm_4}^{-s,1/2-2\varepsilon}},
\end{align}
where
\begin{align}
  \label{C:14}
  I^{\pm_1,\pm_2}
  &=
  \iint \rho A_\mu^{\mathrm{hom.}}
  \Innerprod{\boldsymbol\alpha^\mu \mathbf\Pi_{\pm_1} \psi_1}{\mathbf\Pi_{\pm_2} \psi_2} \, dt \, dx,
  \\
  \label{C:16}
  J^{\pm_1,\dots,\pm_4}
  &=
  \iint \rho \square^{-1} \Innerprod{\boldsymbol\alpha^\mu\mathbf\Pi_{\pm_1}\psi_1}{\mathbf\Pi_{\pm_2}\psi_2} \cdot\Innerprod{\boldsymbol\alpha_\mu\mathbf\Pi_{\pm_3}\psi_3}{\mathbf\Pi_{\pm_4}\psi_4} \, dt \, dx,
\end{align}
and the $\psi_j \in \mathcal S(\R^{1+3})$ are $\C^4$-valued.

The time cut-off $\rho$ is included in order to smooth out a singularity in the operator $\square^{-1}$ appearing in $\mathcal N$. In fact, the following holds (see \cite{Klainerman:1995b}):

\begin{lemma}\label{C:Lemma1} Given $G \in \mathcal S(\R^{1+3})$, set $u = \square^{-1} G$. Then $u = u_+ - u_-$, where
$$
  \widehat{u_\pm}(t,\xi) = \frac{e^{\mp it \abs{\xi}}}{4\pi\abs{\xi}} \int_{-\infty}^\infty \frac{e^{it(\tau'\pm\abs{\xi})}-1}{\tau'\pm\abs{\xi}} \widetilde G(\tau',\xi) \, d\tau'.
$$
Moreover, multiplying by the time cut-off and taking Fourier transform also in time,
$$
  \widetilde{\rho u_\pm}(\tau,\xi)
  \\
  = \int_{-\infty}^\infty
  \frac{\kappa_{\pm}(\tau,\tau';\xi)}{4\pi\abs{\xi}}
  \widetilde G(\tau',\xi) \, d\tau',
$$
where
$$
  \kappa_{\pm}(\tau,\tau';\xi)
  = \frac{\widehat\rho(\tau-\tau')-\widehat\rho(\tau\pm\abs{\xi})}{\tau'\pm\abs{\xi}}
$$
and $\widehat \rho(\tau)$ denotes the Fourier transform of $\rho(t)$.
\end{lemma}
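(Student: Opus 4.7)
The plan is a direct Fourier computation in three steps. First, apply the spatial Fourier transform to reduce $\square u = G$, $u\vert_{t=0} = \partial_t u\vert_{t=0} = 0$, to the inhomogeneous ODE
$$(\partial_t^2 + \abs{\xi}^2)\widehat u(t,\xi) = -\widehat G(t,\xi), \qquad \widehat u(0,\xi) = \partial_t \widehat u(0,\xi) = 0,$$
whose standard Duhamel solution is
$$\widehat u(t,\xi) = -\int_0^t \frac{\sin((t-s)\abs{\xi})}{\abs{\xi}}\,\widehat G(s,\xi)\,ds.$$

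Second, write $\sin((t-s)\abs{\xi}) = (e^{i(t-s)\abs{\xi}} - e^{-i(t-s)\abs{\xi}})/(2i)$ and insert the temporal Fourier inversion $\widehat G(s,\xi) = (2\pi)^{-1}\int e^{is\tau'}\widetilde G(\tau',\xi)\,d\tau'$. After exchanging the order of integration (justified by the Schwartz hypothesis on $G$), the inner $s$-integrals are evaluated in closed form as
$$\int_0^t e^{is(\tau'\mp\abs{\xi})}\,ds = \frac{e^{it(\tau'\mp\abs{\xi})}-1}{i(\tau'\mp\abs{\xi})},$$
with the case $\tau' = \pm\abs{\xi}$ handled by the obvious removable-singularity limit. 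Factoring $e^{\pm it\abs{\xi}}$ out of each term and collecting produces the two contributions identified in the lemma as $\widehat{u_\pm}$, so that the resulting decomposition $u = u_+ - u_-$ is exactly the one claimed. Note that the apparent pole at $\tau' = \mp\abs{\xi}$ is removable because the numerator $e^{it(\tau'\pm\abs{\xi})}-1$ vanishes linearly there.

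Third, for the space-time Fourier transform of $\rho u_\pm$, compute
$$\widetilde{\rho u_\pm}(\tau,\xi) = \int e^{-it\tau}\rho(t)\widehat{u_\pm}(t,\xi)\,dt,$$
substitute the formula for $\widehat{u_\pm}$ from the first part, and swap the $t$- and $\tau'$-integrals (again valid by rapid decay). The prefactor $e^{-it\tau}e^{\mp it\abs{\xi}}$ combines with the numerator $e^{it(\tau'\pm\abs{\xi})}-1$ to give $e^{it(\tau'-\tau)} - e^{-it(\tau\pm\abs{\xi})}$, so the inner $t$-integral collapses to
$$\int\rho(t)\bigl(e^{it(\tau'-\tau)} - e^{-it(\tau\pm\abs{\xi})}\bigr)\,dt = \widehat\rho(\tau-\tau') - \widehat\rho(\tau\pm\abs{\xi}),$$
using $\widehat\rho(\sigma) = \int e^{-it\sigma}\rho(t)\,dt$. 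Dividing by $4\pi\abs{\xi}(\tau'\pm\abs{\xi})$ yields exactly the claimed kernel $\kappa_\pm(\tau,\tau';\xi)$.

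This lemma is really bookkeeping rather than substantive analysis; I do not anticipate any serious obstacle beyond keeping signs and Fourier conventions consistent and verifying that the Fubini interchanges are legitimate. The conceptual payoff, however, is important for the rest of the paper: $\kappa_\pm$, being a divided difference of $\widehat\rho$, is smooth and rapidly decaying in both $\tau$ and $\tau'$, so the cut-off effectively regularizes the naive $1/(\tau'\pm\abs{\xi})$ singularity of $\square^{-1}$. This smooth representation is precisely what will be needed to estimate the quadrilinear form $J^{\pm_1,\dots,\pm_4}$ in \eqref{C:16}, where $\square^{-1}$ must be applied to non-Schwartz inputs living in the $X^{s,b}_\pm$ spaces.
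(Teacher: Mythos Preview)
Your proof is correct and is exactly the standard derivation; the paper does not actually supply a proof of this lemma but simply cites \cite{Klainerman:1995b}, so there is nothing to compare against beyond the statement itself. Your three-step computation (Duhamel in the spatial Fourier variable, exponential expansion of the sine kernel with temporal Fourier inversion, then the $t$-integral against $\rho$) is precisely how such formulas are obtained, and your remark about the removable singularity at $\tau' = \mp\abs{\xi}$ and the regularizing role of $\kappa_\pm$ correctly anticipates how the lemma is used downstream (cf.\ Lemma~\ref{C:Lemma2} and estimate~\eqref{C:48}).
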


We focus first on the quadrilinear estimate \eqref{C:12}, which is by far the most difficult of the two. The trilinear estimate \eqref{C:10} is proved at the end of the paper.

Write
\begin{gather*}
  s_1=s_2=s_3=s, \quad s_4=-s,
  \quad
  b_1=b_2=b_3=\frac12 + \varepsilon, \quad b_4=\frac12-2\varepsilon,
  \\
  \widetilde{\psi_j}
  = z_j \bigabs{\widetilde{\psi_j}},
  \qquad
  \bigabs{\widetilde{\psi_j}(X_j)}
  = \frac{F_j(X_j)}{\angles{\xi_j}^{s_j} \angles{\tau_j\pm_j\abs{\xi_j}}^{b_j}}
  \qquad (X_j = (\tau_j,\xi_j)), 
\end{gather*}
where $z_j : \R^{1+3} \to \C^4$ is measurable, $\abs{z_j}=1$ and $F_j \in L^2(\R^{1+3)}$, $F_j \ge 0$. Applying Plancherel's theorem, Lemma \ref{C:Lemma1} and \eqref{B:2} to \eqref{C:16}, we see that it is enough to prove \eqref{C:12} for
\begin{equation}\label{C:26}
  J^{\boldsymbol\Sigma}
  =
  \int
  \frac{\kappa_{\pm_0}(\tau_0,\tau_0';\xi_0)}{\abs{\xi_0}}
  \cdot
  \frac{q_{1234} \prod_{j=1}^4 F_j(X_j)}
  {\prod_{j=1}^4 \angles{\xi_j}^{s_j} \angles{\tau_j\pm_j\abs{\xi_j}}^{b_j}}
  \, d\mu^{12}_{X_0'} \, d\mu^{43}_{X_0}
  \, d\tau_0 \, d\tau_0' \, d\xi_0,
\end{equation}
where
\begin{gather}
  \boldsymbol\Sigma = (\pm_0,\pm_1,\pm_2,\pm_3,\pm_4),
  \\
  X_0' = (\tau_0',\xi_0), \qquad X_0 = (\tau_0,\xi_0),
  \qquad X_j = (\tau_j,\xi_j), \;\; j=1,\dots,4,
  \\
  \label{C:32}
  e_j = \pm_j \frac{\xi_j}{\abs{\xi_j}} \in \mathbb S^2 \qquad (j=0,\dots,4),
  \\
  \label{C:30}
  q_{1234} = \innerprod{\boldsymbol\alpha^\mu\mathbf\Pi(e_1)z_1(X_1)}{\mathbf\Pi(e_2)z_2(X_2)}
  \innerprod{\boldsymbol\alpha_\mu\mathbf\Pi(e_3)z_3(X_3)}{\mathbf\Pi(e_4)z_4(X_4)}.
\end{gather}
Note the implicit summation over $\mu=0,1,2,3$ in \eqref{C:30}. The convolution measures $d\mu^{12}_{X_0'}, d\mu^{43}_{X_0}$ are defined as in \eqref{B:2}, hence, in \eqref{C:26},
\begin{gather}
  \label{C:35:1}
  X_0' = X_1 - X_2, \qquad
  X_0 = X_4 - X_3,
  \\
  \label{C:35}
  \tau_0'=\tau_1-\tau_2, \qquad \tau_0=\tau_4-\tau_3,
  \qquad
  \xi_0=\xi_1-\xi_2=\xi_4-\xi_3.
\end{gather}
We may restrict the integration in \eqref{C:26} to the region where $\xi_j \neq 0$ for $j=0,\dots,4$, hence the unit vectors $e_j$ are well-defined, as are the angles
\begin{equation}\label{C:40}
  \theta_{jk} = \vangle(e_j,e_k) = \vangle(\pm_j\xi_j,\pm_k\xi_k) \qquad (j,k=0,\dots,4),
\end{equation}
which play a key role in our analysis.

The part of \eqref{C:26} corresponding to $\abs{\xi_0} \le 1$ is easy to treat (see section \ref{R}), so for the moment we shall restrict to $\abs{\xi_0} \ge 1$, hence we replace the weight $\abs{\xi_0}$ in \eqref{C:26} by $\angles{\xi_0}$. Now assign dyadic sizes to the weights in \eqref{C:26} (recall the convention that $N$'s and $L$'s are dyadic numbers greater than or equal to one)
$$
  \angles{\tau_0'\pm_0\abs{\xi_0}} \sim L_0',
  \qquad
  \angles{\tau_j\pm_j\abs{\xi_j}} \sim L_j,
  \qquad
  \angles{\xi_j} \sim N_j
  \qquad (j=0,\dots,4),
$$
and define
$$
  \boldN = (N_0,\dots,N_4),
  \qquad
  \boldL = (L_0,L_0',L_1,\dots,L_4).
$$
We will use the shorthand
$$
  \Nmin^{012} = \min(N_0,N_1,N_2),
  \qquad
  \Nmax^{012} = \max(N_0,N_1,N_2),
$$
and similarly for the $L$'s and for other index sets than $012$. In particular, an index $0'$ will refer to $L_0'$, so
$$
  \Lmin^{0'12} = \min(L_0',L_1,L_2)
$$
and
$$
  \Lmin^{00'} = \min(L_0,L_0'),
$$
for example. In the case of a three-index such as $0'12$ we also let $\Lmed^{0'12}$ denote the median.

By \eqref{C:35}, $\xi_0=\xi_1-\xi_2$ in \eqref{C:26}, so by the triangle inequality,  $N_j \lesssim N_k+N_l$ for all permutations $(j,k,l)$ of $(0,1,2)$, hence one of the following must hold:
\begin{subequations}\label{C:41}
\begin{alignat}{2}
  \label{C:41a}
  N_0 &\ll N_1 \sim N_2& \qquad &(\text{``low output''}),
  \\
  \label{C:41c}
  N_0 &\sim \Nmax^{12} \ge \Nmin^{12}& \qquad &(\text{``high output''}),
\end{alignat}
\end{subequations}
and similarly for the index 034. In view of \eqref{C:41}, $\Nmed^{012} \sim \Nmax^{012}$ and
\begin{equation}\label{C:43}
  \Nmin^{012} \Nmax^{012} \sim N_0\Nmin^{12},
\end{equation}
and similarly for the index 034.

We now pull out the dyadic weights in \eqref{C:26}, using the fact that
\begin{equation}\label{C:48}
  \kappa_{\pm}(\tau_0,\tau_0';\xi_0)
  \lesssim
  \frac{\sigma_{L_0,L_0'}(\tau_0-\tau_0')}{(L_0L_0')^{1/2}},
\end{equation}
where
\begin{equation}\label{C:60}
  \sigma_{L_0,L_0'}(r) 
  =
  \left\{
  \begin{alignedat}{2}
  &\frac{1}{\angles{r}^{2}}& \quad &\text{if $L_0 \sim L_0'$},
  \\
  &\frac{1}{(L_0L_0')^{1/2}}& \quad &\text{otherwise}.
  \end{alignedat}
  \right.
\end{equation}
To prove \eqref{C:48} we note that
\begin{equation}\label{C:49}
  \tau_0-\tau_0' = (\tau_0\pm_0\abs{\xi_0}) - (\tau_0'\pm_0\abs{\xi_0}),
\end{equation}
and apply the following with $p=\tau_0\pm_0\abs{\xi_0}$, $q=\tau_0-\tau_0'$:

\begin{lemma}\label{C:Lemma2} For any $M \in \N$,
\begin{equation}\label{C:46}
  \Abs{\frac{\widehat\rho(p)-\widehat\rho(q)}{p-q}}
  \le \frac{C_M}{\angles{p-q} \min(\angles{p},\angles{q})^M}.
\end{equation}
\end{lemma}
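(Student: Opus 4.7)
The proof should exploit the fact that $\rho$ is Schwartz, hence so are $\widehat\rho$ and all its derivatives: for every $k$ and every $M$ there is a constant $C_{k,M}$ with $|\widehat\rho^{(k)}(r)| \le C_{k,M}\angles{r}^{-M}$. This is really the only input; the lemma is essentially a discrete/continuous interpolation between the mean value theorem and the triangle inequality. I would split into the two regimes $|p-q| \le 1$ and $|p-q| \ge 1$. Without loss of generality assume $\angles{p} \le \angles{q}$, so the minimum on the right is $\angles{p}$.

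In the easy regime $|p-q| \ge 1$, we have $\angles{p-q} \sim |p-q|$, and by the triangle inequality
\[
  \Abs{\frac{\widehat\rho(p)-\widehat\rho(q)}{p-q}}
  \le \frac{|\widehat\rho(p)| + |\widehat\rho(q)|}{|p-q|}
  \le \frac{2C_M \angles{p}^{-M}}{|p-q|},
\]
which is the desired bound since $\angles{p}=\min(\angles{p},\angles{q})$.

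In the regime $|p-q| \le 1$ the factor $\angles{p-q}$ is comparable to $1$, so it suffices to bound the left side by $C_M \min(\angles{p},\angles{q})^{-M}$. Here I would apply the mean value theorem: there exists $\xi$ between $p$ and $q$ with
\[
  \frac{\widehat\rho(p)-\widehat\rho(q)}{p-q} = \widehat\rho'(\xi),
\]
and since $\widehat\rho'$ is again Schwartz, $|\widehat\rho'(\xi)| \le C_{1,M}\angles{\xi}^{-M}$. The only thing to verify is that $\angles{\xi} \sim \angles{p}$. If $|p| \le 2$ then $\angles{p}\le 3$ and the claim follows from $\|\widehat\rho'\|_\infty < \infty$; if $|p| \ge 2$ then $|\xi - p| \le |q-p| \le 1$ forces $|\xi| \ge |p|/2$, whence $\angles{\xi} \gtrsim \angles{p}$.

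The main (and only) obstacle is the pseudo-singularity at $p=q$, which is resolved by the mean value theorem as above; no further structure of $\rho$ is needed beyond being Schwartz. Combining the two regimes yields the stated bound with $C_M = \max(2C_{0,M},\, 4^M C_{1,M},\, \|\widehat\rho'\|_\infty)$ (adjusted by a harmless absolute constant).
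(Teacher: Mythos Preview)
Your proof is correct and follows essentially the same approach as the paper: split into the regimes $|p-q|>1$ (triangle inequality plus rapid decay of $\widehat\rho$) and $|p-q|\le 1$ (mean value theorem plus rapid decay of $\widehat\rho'$), noting $\angles{p}\sim\angles{q}$ in the latter case. The paper writes the difference quotient as $\int_0^1 \widehat\rho'(q+\lambda(p-q))\,d\lambda$ rather than invoking the mean value theorem, but this is a cosmetic difference.
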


\begin{proof} Since $\widehat\rho$ is a Schwartz function, $\abs{\widehat\rho(\tau)} \le C_M\angles{\tau}^{-M}$. This immediately implies \eqref{C:46} if $\abs{p-q} > 1$. If, on the other hand, $\abs{p-q} \le 1$, then $\angles{p-q} \sim 1$ and $\angles{p} \sim \angles{q}$, and using $\abs{\widehat\rho\,'(\tau)} \le C_M\angles{\tau}^{-M}$ we get
$$
  \Abs{\frac{\widehat\rho(p)-\widehat\rho(q)}{p-q}}
  = \Abs{ \int_0^1 \widehat\rho\,'\bigl(q + \lambda(p-q)\bigr) \, d\lambda }
  \le \sup_{0 \le \lambda \le 1} \frac{C_M}{\angles{q + \lambda(p-q)}^M}
  \lesssim \frac{C_M}{\angles{q}^M}.
$$
\end{proof}

By \eqref{C:26} and \eqref{C:48},
\begin{equation}\label{C:56}
  \abs{J^{\mathbf\Sigma}}
  \lesssim \sum_{\boldN,\boldL} \frac{N_4^s J_{\boldN,\boldL}^{\mathbf\Sigma}}{N_0(L_0L_0')^{1/2}(N_1N_2N_3)^s
  (L_1 L_2 L_3)^{1/2+\varepsilon}L_4^{1/2-2\varepsilon}},
\end{equation}
where
\begin{multline}\label{C:58}
  J_{\boldN,\boldL}^{\mathbf\Sigma}
  =
  \int
  \Abs{q_{1234}}
  \sigma_{L_0,L_0'}(\tau_0-\tau_0') \,
  \chi_{K^{\pm_0}_{N_0,L_0}}\!\!(X_0) \,
  \chi_{K^{\pm_0}_{N_0,L_0'}}\!(X_0')
  \\
  \times
   \widetilde{u_1}(X_1) \widetilde{u_2}(X_2)
  \widetilde{u_3}(X_3) \widetilde{u_4}(X_4)
  \, d\mu^{12}_{X_0'} \, d\mu^{43}_{X_0}
  \, d\tau_0 \, d\tau_0' \, d\xi_0,
\end{multline}
with notation as in \eqref{B:102} and \eqref{B:200} (indexed by 1,2,3,4).

Note that the $\psi_j$ do not appear explicitly in \eqref{C:58}. In fact, from now on we can let the $F_j$ be arbitrary, nonnegative $L^2$-functions and the $z_j$ arbitrary measurable $\C^4$-valued functions with $\abs{z_j}=1$.
 
We have now reduced \eqref{C:12} to proving estimates for $J_{\boldN,\boldL}^{\mathbf\Sigma}$. These should of course be independent of $s$ and $\varepsilon$, so we would like to have the estimate which exactly balances the weights in \eqref{C:56} when $s=\varepsilon=0$:
\begin{equation}\label{C:80}
  J_{\boldN,\boldL}^{\mathbf\Sigma}
  \lesssim \bigl( N_0^2 L_0L_0'L_1L_2L_3L_4\bigr)^{1/2}
  \norm{u_1}\norm{u_2}\norm{u_3}\norm{u_4}.
\end{equation}
In fact, this holds in almost all the interactions, but for a certain case we have only been able to prove it up to a factor $\log\angles{L_0}$ on the right hand side.

We have the following result:

\begin{theorem}\label{C:Thm2}
The following estimate holds for all combinations of signs:
\begin{equation}\label{C:80:2}
  J_{\boldN,\boldL}^{\mathbf\Sigma}
  \lesssim \bigl( N_0^2 L_0L_0'L_1L_2L_3L_4\bigr)^{1/2}
  \log\angles{L_0}
  \norm{u_1}\norm{u_2}\norm{u_3}\norm{u_4}.
\end{equation}
\end{theorem}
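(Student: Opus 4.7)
The plan is to reduce $J_{\boldN,\boldL}^{\mathbf\Sigma}$ to products of angularly-localized bilinear $L^2$ convolution estimates, with the angular gains supplied by the null structure hidden in $q_{1234}$. Since the estimate \eqref{C:80:2} is the central technical result of the paper, the actual execution will require a substantial case analysis driven by the relative sizes of the parameters in $\boldN$ and $\boldL$; here I describe the overall architecture.

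\textbf{Step 1: Pointwise null structure of $q_{1234}$.} The starting point is a pointwise bound expressing $|q_{1234}|$ as a sum of products of the angles $\theta_{jk}$ in \eqref{C:40}. Using \eqref{B:17} and the algebra of the Dirac matrices, each Dirac current $v^\mu = \innerprod{\boldsymbol\alpha^\mu \mathbf\Pi(e_j) z_j}{\mathbf\Pi(e_k) z_k}$ is essentially a Minkowski null vector up to an error controlled by $\theta_{jk}$ (or by $\pi - \theta_{jk}$ in the antipodal case); contracting two such near-null vectors via $v^\mu w_\mu$ forces a second angle to appear. The outcome is a bound of the schematic form $|q_{1234}| \lesssim \theta_{12}\theta_{34} + \theta_{13}\theta_{24} + \theta_{14}\theta_{23}$, possibly with refinements that also exploit $\theta_{0j}$. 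This is the quadrilinear system null structure.

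\textbf{Step 2: Angular decomposition.} With the dyadic sizes $\boldN,\boldL$ frozen, decompose $\pm_1 \xi_1,\pm_2\xi_2$ (resp.\ $\pm_3\xi_3,\pm_4\xi_4$) into sectors of common aperture $\gamma_{12}$ (resp.\ $\gamma_{34}$) via Lemmas \ref{B:Lemma4}--\ref{B:Lemma5}, and use the shorthand \eqref{B:202}. On each sector $\theta_{12}\sim \gamma_{12}$ and $\theta_{34}\sim\gamma_{34}$, so the null-structure bound from Step 1 contributes a concrete factor $\gamma_{12}\gamma_{34}$ (plus permutations). The geometric constraint \eqref{B:112} together with the relations \eqref{C:35:1}--\eqref{C:35} tie $\gamma_{12}$ to $\Lmax^{0'12}/\Nmin^{0'12}$ (and similarly for $\gamma_{34}$), so one may restrict the dyadic range of $\gamma_{12},\gamma_{34}$ accordingly.

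\textbf{Step 3: Bilinear $L^2$ estimates and the $\tau$-integrations.} For each localized piece, freeze $\tau_0,\tau_0'$ and use Plancherel to write $J_{\boldN,\boldL,\gamma,\omega}^{\mathbf\Sigma}$ as an integral in $(\tau_0,\tau_0',\xi_0)$ of a product $I_{12}(X_0')\,I_{34}(X_0)$ of bilinear convolutions. Each factor is controlled by an angularly localized $L^2$ bilinear estimate of Foschi--Klainerman type \cite{Foschi:2000} in the refined form of \cite{Selberg:2008a}; the dependence on $N,L,\gamma$ is exactly what is needed to balance the weights in \eqref{C:80:2}. The kernel $\sigma_{L_0,L_0'}$ is integrated out in $(\tau_0,\tau_0')$: if $L_0\not\sim L_0'$ the prefactor $(L_0L_0')^{-1/2}$ in \eqref{C:60} gives the full gain, whereas if $L_0\sim L_0'$ we only have the $\angles{\tau_0-\tau_0'}^{-2}$ decay.

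\textbf{Step 4: Summation and the logarithmic loss.} Sum over the angular sectors $\omega_1,\omega_2$ and $\omega_3,\omega_4$ by Cauchy--Schwarz as in \eqref{B:208}, then sum dyadically in $\gamma_{12},\gamma_{34}$. In most configurations this last sum converges as a geometric series and yields the clean bound \eqref{C:80}. The main obstacle, and the source of the $\log\angles{L_0}$ in \eqref{C:80:2}, appears in the case $L_0\sim L_0'$ combined with a low-output scenario $N_0\ll N_1\sim N_2$ (or the symmetric $034$ version): here $\sigma_{L_0,L_0'}$ only decays quadratically, and the dyadic summation over the angular parameter must run over $\sim\log\angles{L_0}$ scales on which the bilinear estimate is saturated; this is precisely the place where the bilinear $L^2$ theory cannot be pushed to the scale-invariant exponent without loss, forcing the logarithm. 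Several further subcases according to \eqref{C:41} and its $034$ analogue must be treated separately, since the geometric constraints from \eqref{B:110}--\eqref{B:112} and the identification $\Nmin^{012}\Nmax^{012}\sim N_0\Nmin^{12}$ from \eqref{C:43} play different roles in each regime.
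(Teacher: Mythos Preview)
Your high-level architecture is broadly correct: the pointwise null structure of $q_{1234}$ (your Step 1 is essentially Lemma \ref{D:Lemma2}), angular decomposition, bilinear $L^2$ estimates, and summation are indeed the main pillars. However, there are two substantive gaps.

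First, you have the diagnosis of the logarithmic loss backwards. The case $L_0 \sim L_0'$ is actually the \emph{easier} one: here the kernel $\sigma_{L_0,L_0'}(\tau_0-\tau_0') = \angles{\tau_0-\tau_0'}^{-2}$ is invariant under the change of variables $(\tau_0,\tau_0',\xi_0) \mapsto (\tau_2+\tau_4,\tau_1+\tau_3,\xi_1+\xi_3)$ (see \eqref{G:5}), which lets one pair $u_1$ with $u_3$ and $u_2$ with $u_4$ and apply tube-restricted null form estimates directly. The paper disposes of $L_0 \sim L_0'$ completely, with no log loss. The logarithm appears only when $L_0 \not\sim L_0'$, specifically in case \eqref{E:1e} (both internal angles small compared to the external ones) combined with subcases \eqref{E:0c}--\eqref{E:0d}: there the dyadic sum over $\gamma_{34}$ in \eqref{G:50:6} has $O(\log L_0)$ saturated terms, as computed in \eqref{G:50:10}.

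Second, your Step 3 seriously understates the bilinear machinery required. The standard Foschi--Klainerman estimates (Theorem \ref{M:Thm}) together with Theorem \ref{D:Thm} handle only the easiest case \eqref{D:97}. For the remaining cases the paper relies on several non-standard estimates from \cite{Selberg:2008a}: the null form estimate with tube restriction (Theorem \ref{G:Thm}), the concentration/nonconcentration estimate (Theorem \ref{J:Thm1}), the anisotropic estimate (Theorem \ref{J:Thm3}), the ball-restricted estimate (Theorem \ref{G:Thm2}), and the partial orthogonality Lemma \ref{J:Lemma}. These are not minor refinements; they are precisely what recovers the missing factor $(\Nmin^{34}/N_0)^{1/2}$ (or its symmetric counterparts) in the asymmetric high/low-output interactions such as $N_4 \ll N_0 \sim N_3$, and without them your Step 4 does not close. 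The most delicate subcases also require a further decomposition of the spatial frequencies into cubes of side $\sim \Nmax^{12}\gamma$ (section \ref{G:40:0:0}), combined with Lemma \ref{D:Lemma5} to exploit thickened-hyperplane localization of $X_0,X_0'$; none of this is visible in your outline.
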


The proof of this theorem takes up a large part of the paper (sections \ref{D:60}--\ref{E}). The logarithmic loss can likely be removed, but for our purposes it is harmless, since we assume $s,\varepsilon > 0$; once Theorem \ref{C:Thm2} has been proved, the main quadrilinear estimate \eqref{C:12} follows by a straightforward summation argument.

For later use we define the operator $T^{\pm_0}_{L_0,L_0'}$ by
\begin{equation}\label{C:90}
  T^{\pm_0}_{L_0,L_0'}F(\tau_0,\xi_0)
  =
  \int a^{\pm_0}_{L_0,L_0'}(\tau_0,\tau_0',\xi_0)
  F(\tau_0',\xi_0) \, d\tau_0',
\end{equation}
where
\begin{equation}\label{C:90:2}
  a^{\pm_0}_{L_0,L_0'}(\tau_0,\tau_0',\xi_0) 
  =
  \left\{
  \begin{alignedat}{2}
  &\frac{1}{\angles{\tau_0-\tau_0'}^{2}}& \quad &\text{if $L_0 \sim L_0'$},
  \\
  &\frac{\chi_{\tau_0\pm_0\abs{\xi_0}=O(L_0)}
  \chi_{\tau_0'\pm_0\abs{\xi_0}=O(L_0')}}{(L_0L_0')^{1/2}}&
  \quad &\text{otherwise}.
  \end{alignedat}
  \right.
\end{equation}

\begin{lemma}\label{D:Lemma3} $\bignorm{T^{\pm_0}_{L_0,L_0'}F} \lesssim \norm{F}$ for $F \in L^2(\R^{1+3})$.
\end{lemma}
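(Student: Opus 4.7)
The plan is to treat $\xi_0$ as a passive parameter and estimate the resulting integral operator in $\tau_0$ uniformly in $\xi_0$, then integrate over $\xi_0$ by Fubini. So the goal reduces to showing that for each fixed $\xi_0 \in \R^3$, the operator
$$
  S_{\xi_0} g(\tau_0) = \int a^{\pm_0}_{L_0,L_0'}(\tau_0,\tau_0',\xi_0) \, g(\tau_0') \, d\tau_0'
$$
is bounded on $L^2(\R_{\tau_0})$ with a norm that is uniform in $\xi_0$, $L_0$ and $L_0'$. Once this is established, setting $g = F(\cdot,\xi_0)$ gives
$$
  \bignorm{T^{\pm_0}_{L_0,L_0'}F}^2 = \int \bignorm{S_{\xi_0}F(\cdot,\xi_0)}_{L^2_{\tau_0}}^2 d\xi_0 \lesssim \int \bignorm{F(\cdot,\xi_0)}_{L^2_{\tau_0}}^2 d\xi_0 = \norm{F}^2.
$$

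I would split the analysis according to the two cases in the definition \eqref{C:90:2}. In the case $L_0 \sim L_0'$, the kernel reduces to $1/\angles{\tau_0-\tau_0'}^2$, i.e.\ a convolution operator (in $\tau_0$) with an integrable kernel, so by Young's inequality the operator norm on $L^2(\R_{\tau_0})$ is bounded by $\bignorm{1/\angles{\cdot}^2}_{L^1(\R)}$, an absolute constant, independent of $\xi_0, L_0, L_0'$. In the case $L_0 \not\sim L_0'$, the kernel is a pure tensor,
$$
  a^{\pm_0}_{L_0,L_0'}(\tau_0,\tau_0',\xi_0) = \frac{1}{(L_0L_0')^{1/2}} \phi(\tau_0)\, \psi(\tau_0'),
$$
with $\phi = \chi_{\tau_0\pm_0\abs{\xi_0} = O(L_0)}$ and $\psi = \chi_{\tau_0'\pm_0\abs{\xi_0} = O(L_0')}$, so $S_{\xi_0}$ is a rank-one operator. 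Its $L^2 \to L^2$ norm is $\norm{\phi}_{L^2_{\tau_0}} \norm{\psi}_{L^2_{\tau_0'}}/(L_0L_0')^{1/2}$. Since, for each fixed $\xi_0$, the set $\{\tau_0 : \tau_0\pm_0\abs{\xi_0} = O(L_0)\}$ is an interval of length $O(L_0)$, we have $\norm{\phi}_{L^2_{\tau_0}} \lesssim L_0^{1/2}$ and similarly $\norm{\psi}_{L^2_{\tau_0'}} \lesssim (L_0')^{1/2}$, so the norm of $S_{\xi_0}$ is again $O(1)$ uniformly in $\xi_0$, $L_0$, $L_0'$.

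There is no real obstacle: the estimate is essentially by design. The only mild point is recognizing that the second case is a rank-one tensor kernel, so the bound comes from Cauchy--Schwarz rather than from a Schur-type test; the $(L_0 L_0')^{-1/2}$ prefactor is exactly tailored to offset the $L^2$ sizes of the two characteristic functions.
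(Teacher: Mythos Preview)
Your proof is correct and follows essentially the same approach as the paper: in the case $L_0 \sim L_0'$ both arguments use the integrability of $\angles{r}^{-2}$ (you via Young's inequality, the paper via Cauchy--Schwarz against that kernel as a measure), and in the case $L_0 \not\sim L_0'$ both exploit that the $(L_0L_0')^{-1/2}$ prefactor exactly cancels the $L^2$ sizes of the two characteristic functions (you phrase this as a rank-one operator norm, the paper as a direct Cauchy--Schwarz split). The packaging differs slightly but the content is the same.
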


\begin{proof}
By duality, this is equivalent to
\begin{equation}\label{C:92}
  \Abs{\iiint a^{\pm_0}_{L_0,L_0'}(\tau_0,\tau_0',\xi_0)
  F(\tau_0',\xi_0) G(\tau_0,\xi_0) \, d\tau_0 \, d\tau_0' \, d\xi_0}
  \lesssim \norm{F} \norm{G}.
\end{equation}
If $L_0 \sim L_0'$, we use the Cauchy-Schwarz inequality with respect to the measure $\angles{\tau-\tau_0}^{-2} d\tau_0 \, d\tau_0' \, d\xi_0$, obtaining
$$
  \text{l.h.s.}\eqref{C:92}
  \le
  \left( \iiint \frac{F^2(\tau_0',\xi_0)}{\angles{\tau_0-\tau_0'}^{2}} d\tau_0 \, d\tau_0' \, d\xi_0 \right)^{1/2}
  \left( \iiint \frac{G^2(\tau_0,\xi_0)}{\angles{\tau_0-\tau_0'}^{2}} d\tau_0 \, d\tau_0' \, d\xi_0 \right)^{1/2}.
$$
If $L_0 \ll L_0'$ or $L_0' \ll L_0$, then by the Cauchy-Schwarz inequality,
\begin{equation} \label{C:92:2}
\begin{aligned}
  \text{l.h.s.}\eqref{C:92}
  \le
  \frac{1}{(L_0L_0')^{1/2}}
  &\left( \iiint \chi_{\tau_0\pm_0\abs{\xi_0}=O(L_0)} F^2(\tau_0',\xi_0) d\tau_0 \, d\tau_0' \, d\xi_0 \right)^{1/2}
  \\
  \times
  &\left( \iiint \chi_{\tau_0'\pm_0\abs{\xi_0}=O(L_0')} G^2(\tau_0,\xi_0) d\tau_0 \, d\tau_0' \, d\xi_0 \right)^{1/2}.
\end{aligned}
\end{equation}
\end{proof}

In some situations we use the following variant of the last lemma:

\begin{lemma}\label{D:Lemma5} Assume that $L_0 \ll L_0'$ or $L_0' \ll L_0$. Let $\omega, \omega' \in \mathbb S^2$, $c,c' \in \R$ and $d,d' > 0$. Assume that $F,G \in L^2(\R^{1+3})$ satisfy
\begin{align*}
  \supp F &\subset \left\{ (\tau_0',\xi_0) \colon
  \tau_0'+\xi_0\cdot\omega' = c' + O(d') \right\},
  \\
  \supp G &\subset \left\{ (\tau_0,\xi_0) \colon
  \tau_0+\xi_0\cdot\omega = c + O(d) \right\}.
\end{align*}
Then
$$
  \bignorm{T^{\pm_0}_{L_0,L_0'}F}
  \lesssim \left(\frac{d'}{L_0'}\right)^{1/2} \norm{F},
$$
and
$$
  \Abs{\iint T^{\pm_0}_{L_0,L_0'}F(\tau_0,\xi_0) G(\tau_0,\xi_0) \, d\tau_0 \, d\xi_0}
  \lesssim 
  \left(\frac{dd'}{L_0L_0'}\right)^{1/2} \norm{F} \norm{G}.
$$
\end{lemma}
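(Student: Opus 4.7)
The plan is to prove both estimates by elementary Cauchy--Schwarz, exploiting two simple structural facts. First, since we are in the regime $L_0 \not\sim L_0'$, the amplitude $a^{\pm_0}_{L_0,L_0'}$ is a tensor product in $(\tau_0,\tau_0')$: it factorizes as $(L_0L_0')^{-1/2}\chi_{\tau_0\pm_0\abs{\xi_0}=O(L_0)}\chi_{\tau_0'\pm_0\abs{\xi_0}=O(L_0')}$, so (for each fixed $\xi_0$) the analysis decouples into two independent one-dimensional integrations in $\tau_0$ and $\tau_0'$. Second, the hyperplane support hypothesis on $F$ forces the $\tau_0'$-section $\{\tau_0' : F(\tau_0',\xi_0) \ne 0\}$ at fixed $\xi_0$ to have one-dimensional Lebesgue measure $\lesssim d'$, since the constraint $\tau_0'+\xi_0\cdot\omega'=c'+O(d')$ pins $\tau_0'$ down to an interval of length $\lesssim d'$ once $\xi_0$ is fixed; analogously the $\tau_0$-section of $\supp G$ at fixed $\xi_0$ has measure $\lesssim d$.

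For the first bound, I would apply Cauchy--Schwarz to the inner $\tau_0'$-integral in the definition of $T^{\pm_0}_{L_0,L_0'}F$, using only the $F$-support to estimate the measure of the integration region by $\lesssim d'$:
$$
  \Bigabs{T^{\pm_0}_{L_0,L_0'}F(\tau_0,\xi_0)}
  \lesssim \frac{\chi_{\tau_0\pm_0\abs{\xi_0}=O(L_0)}}{(L_0L_0')^{1/2}} \cdot (d')^{1/2}\, \bignorm{F(\cdot,\xi_0)}_{L^2_{\tau_0'}}.
$$
Squaring and integrating in $\tau_0$ over the $O(L_0)$-slab contributes a factor of $L_0$, which cancels against the prefactor to leave $d'/L_0'$. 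Integrating in $\xi_0$ yields the stated operator bound.

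For the second bound, the tensor-product structure of the amplitude lets us rewrite the double integral, for each fixed $\xi_0$, as a product of two independent one-dimensional integrals:
$$
  \iint T^{\pm_0}_{L_0,L_0'}F\cdot G\, d\tau_0\, d\xi_0
  = \frac{1}{(L_0L_0')^{1/2}}\int \left(\int \chi_{\tau_0'\pm_0\abs{\xi_0}=O(L_0')} F\, d\tau_0'\right)\left(\int \chi_{\tau_0\pm_0\abs{\xi_0}=O(L_0)} G\, d\tau_0\right)d\xi_0.
$$
Cauchy--Schwarz applied separately to each inner integral (using the support-measure bounds $\lesssim d'$ and $\lesssim d$) produces $(d')^{1/2}\bignorm{F(\cdot,\xi_0)}_{L^2}$ and $d^{1/2}\bignorm{G(\cdot,\xi_0)}_{L^2}$ respectively; a final Cauchy--Schwarz in $\xi_0$ then gives the factor $\sqrt{dd'/(L_0L_0')}\,\norm{F}\norm{G}$.

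No serious obstacle is anticipated: both estimates are routine refinements of Lemma \ref{D:Lemma3}, and the only new input is the pair of support-measure gains $d,d'$ supplied by the hyperplane constraints on $F$ and $G$. The only step requiring any care is the verification that the $\tau_0$- and $\tau_0'$-sections of $\supp G$ and $\supp F$ (at fixed $\xi_0$) are indeed intervals of length $\lesssim d$ and $\lesssim d'$ respectively, which follows immediately by solving the respective affine constraints for $\tau_0, \tau_0'$.
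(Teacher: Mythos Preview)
Your proposal is correct and follows essentially the same approach as the paper: both arguments rely on the tensor-product form of $a^{\pm_0}_{L_0,L_0'}$ in the regime $L_0 \not\sim L_0'$ and on replacing the amplitude's characteristic functions $\chi_{\tau_0'\pm_0|\xi_0|=O(L_0')}$, $\chi_{\tau_0\pm_0|\xi_0|=O(L_0)}$ by the tighter hyperplane constraints coming from $\supp F$, $\supp G$, after which Cauchy--Schwarz gives the gains $d'/L_0'$ and $d/L_0$. The only organizational difference is that the paper argues via duality and a single Cauchy--Schwarz on the triple integral (pointing back to \eqref{C:92:2}), while you compute $\norm{T^{\pm_0}_{L_0,L_0'}F}$ directly and factor the second integral explicitly; the content is the same.
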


\begin{proof} Replace $\chi_{\tau_0'\pm_0\abs{\xi_0}=O(L_0')}$ in \eqref{C:92:2} by $\chi_{\tau_0'+\xi_0\cdot\omega'=c'+O(d')}$ to get the first estimate above. To get the second estimate we replace also $\chi_{\tau_0\pm_0\abs{\xi_0}=O(L_0)}$ in \eqref{C:92:2} by $\chi_{\tau_0+\xi_0\cdot\omega=c+O(d)}$.
\end{proof}

\section{Bilinear null structure: A review}\label{D}

Since bilinear $L^2$ estimates for the spaces $X^{s,b}_\pm$ are well understood, it is natural to try to reduce \eqref{C:80} directly to such estimates by applying the Cauchy-Schwarz inequality in the most obvious way. This approach fails, but it is worthwhile to go through the argument, since it leads us to the quadrilinear null structure.

Given a bounded, measurable $\sigma : \R^{1+3} \times \R^{1+3} \to \C$, we define the bilinear operator $\mathfrak B_\sigma(u_1,u_2)$ by inserting $\sigma(X_1,X_2)$ in the convolution integral \eqref{B:2}:
\begin{equation}\label{D:1}
  \mathcal F \mathfrak B_\sigma(u_1,u_2)
  (X_0)
  = \int
  \sigma(X_1,X_2)\,
  \widetilde{u_1}(X_1)\,
  \widetilde{u_2}(X_2)
  \, d\mu^{12}_{X_0}.
\end{equation}
Given signs $\pm_0,\pm_1,\pm_2$, we say that the bilinear interaction $(X_0,X_1,X_2)$ is \emph{null} if the hyperbolic weights
\begin{equation}\label{D:3}
  \hypwt_j \equiv \tau_j \pm_j \abs{\xi_j} \qquad (j=0,1,2)
\end{equation}
all vanish. This is dangerous, since we then get no help from the hyperbolic weights in the integral \eqref{C:26} (there we actually have two bilinear interactions, and the worst case would be when both are null simultaneously).

In the bilinear null interaction, $X_0,X_1,X_2$ are all null, and since $X_0 = X_1 - X_2$, it is clear geometrically that they must be collinear (otherwise $X_0$ could not end up lying on the null cone). Therefore, the angle $\theta_{12}=\theta(\pm_1\xi_1,\pm_2\xi_2)$ must vanish. It is therefore not surprising that if $\sigma(X_1,X_2) = O(\theta_{12})$, then we have better $L^2$ estimates for $\mathfrak B_\sigma(u_1,u_2)$ than for a generic product like $u_1\overline{u_2}$.

In fact, the following holds (we prove this below):

\begin{theorem}\label{D:Thm}
If the symbol $\sigma$ satisfies $\sigma(X_1,X_2) = O(\theta_{12})$, then with notation as in \eqref{B:200} (indexed by $1,2$),
$$
  \Bignorm{\Proj_{K^{\pm_0}_{N_0,L_0}} \mathfrak B_\sigma(u_1,u_2)}
  \lesssim (N_0L_0L_1L_2)^{1/2} \norm{u_1}\norm{u_2}.
$$
\end{theorem}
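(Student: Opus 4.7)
The plan is to combine the pointwise bound $\abs{\sigma(X_1,X_2)} \lesssim \theta_{12}$ with a control of $\theta_{12}$ by the hyperbolic weights, and then invoke the sharp frequency-localized $L^2$ bilinear estimate for waves in $\R^{1+3}$.

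\emph{Angle bound.} On the Fourier support of the integrand defining $\Proj_{K^{\pm_0}_{N_0,L_0}} \mathfrak B_\sigma(u_1,u_2)$, one has $X_j \in K^{\pm_j}_{N_j,L_j}$ for $j=0,1,2$, with $X_0 = X_1 - X_2$. Setting $\hypwt_j = \tau_j \pm_j \abs{\xi_j} = O(L_j)$ and using $\tau_0 = \tau_1 - \tau_2$, we obtain the scalar identity
\[
  \pm_0 \abs{\xi_0} \mp_1 \abs{\xi_1} \pm_2 \abs{\xi_2} = \hypwt_0 - \hypwt_1 + \hypwt_2 = O(\Lmax^{012}).
\]
A short case analysis on the sign pattern $(\pm_0,\pm_1,\pm_2)$, applying \eqref{B:4} when the left-hand side is comparable to $\abs{\xi_1} + \abs{\xi_2} - \abs{\xi_1 - \xi_2}$ and \eqref{B:6} when it is comparable to $\abs{\xi_1 - \xi_2} - \bigabs{\abs{\xi_1} - \abs{\xi_2}}$, yields
\[
  \theta_{12}^2 \lesssim \frac{\Lmax^{012}}{\Nmin^{12}}.
\]
In the exceptional sign configurations where neither identity applies, the algebraic identity above forces $\Nmin^{12} \lesssim \Lmax^{012}$, and the bound holds trivially from $\theta_{12} \le \pi$. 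By the hypothesis on $\sigma$, we conclude that $\abs{\sigma(X_1,X_2)} \lesssim (\Lmax^{012}/\Nmin^{12})^{1/2}$ throughout the relevant support.

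\emph{Bilinear $L^2$ estimate.} The sharp frequency- and modulation-localized $L^2$ bilinear estimate for waves in $\R^{1+3}$ of Foschi-Klainerman \cite{Foschi:2000} (with the $X^{s,b}$ refinements collected in \cite{Selberg:2008a}) reads, after taking the minimum over its three cyclic/dual forms,
\[
  \bignorm{\Proj_{K^{\pm_0}_{N_0,L_0}}(u_1 u_2)} \lesssim \bigl(\Nmin^{012} \Lmin^{012} \Lmed^{012}\bigr)^{1/2} \norm{u_1}\norm{u_2}.
\]

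\emph{Combination.} Multiplying the two bounds and simplifying,
\[
  \bignorm{\Proj_{K^{\pm_0}_{N_0,L_0}} \mathfrak B_\sigma(u_1,u_2)} \lesssim \left(\frac{\Nmin^{012}}{\Nmin^{12}}\right)^{1/2} \bigl(\Lmin^{012} \Lmed^{012} \Lmax^{012}\bigr)^{1/2} \norm{u_1}\norm{u_2} \lesssim (N_0 L_0 L_1 L_2)^{1/2} \norm{u_1}\norm{u_2},
\]
since $\Nmin^{012} \le \Nmin^{12}$, the product $\Lmin \Lmed \Lmax = L_0 L_1 L_2$, and $N_0 \ge 1$.

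\emph{Main obstacle.} The only genuinely non-trivial step is the sign-by-sign verification of the angle bound in Step~1: whether the input null cones are parallel (same sign) or antiparallel (opposite signs) changes which of \eqref{B:4} and \eqref{B:6} applies, and in certain degenerate configurations the bound is recovered only because the algebraic constraint forces $\Nmin^{12} \lesssim \Lmax^{012}$. The subsequent combination is essentially arithmetic.
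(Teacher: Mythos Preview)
Your overall approach is exactly the paper's: bound $\theta_{12}$ via the hyperbolic weights (this is the content of Lemma~\ref{D:Lemma1}, which gives $\theta_{12}\lesssim(\Lmax^{012}/\Nmin^{12})^{1/2}$), then combine with the bilinear $L^2$ estimate from Theorem~\ref{M:Thm}.

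However, the bilinear estimate you quote is too strong and is false as stated. You write
\[
  \Bignorm{\Proj_{K^{\pm_0}_{N_0,L_0}}(u_1 \overline{u_2})} \lesssim \bigl(\Nmin^{012} \Lmin^{012} \Lmed^{012}\bigr)^{1/2}\norm{u_1}\norm{u_2},
\]
but the correct bound, obtained by taking the minimum of \eqref{M:10} and \eqref{M:14} and using \eqref{C:43}, is \eqref{M:17}:
\[
  \Bignorm{\Proj_{K^{\pm_0}_{N_0,L_0}}(u_1 \overline{u_2})} \lesssim \bigl(N_0\Nmin^{12} \Lmin^{012} \Lmed^{012}\bigr)^{1/2}\norm{u_1}\norm{u_2}.
\]
Since $N_0\Nmin^{12}\sim\Nmin^{012}\Nmax^{012}$, your version is missing a factor $(\Nmax^{012})^{1/2}$; a single $N$ in front of $\Lmin\Lmed$ is not achievable in general (consider the low-output case $N_0\ll N_1\sim N_2$ with $L_0=L_1=L_2=1$). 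Your final step, where you invoke $\Nmin^{012}\le\Nmin^{12}$ and then pad with $N_0\ge1$, is therefore relying on a false intermediate inequality.

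With the correct constant, the combination is immediate:
\[
  \left(\frac{\Lmax^{012}}{\Nmin^{12}}\right)^{1/2}\bigl(N_0\Nmin^{12}\Lmin^{012}\Lmed^{012}\bigr)^{1/2}
  =\bigl(N_0 L_0 L_1 L_2\bigr)^{1/2},
\]
which is exactly the paper's argument in \eqref{D:40}; no appeal to $N_0\ge1$ is needed.
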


This estimate fails to hold for a generic product like $u_1\overline{u_2}$.

Now compare Theorem \ref{D:Thm} with the estimate \eqref{C:80} that we want to prove. Let us for simplicity replace $\sigma_{L_0,L_0'}(\tau_0-\tau_0')$ by $\delta(\tau_0-\tau_0')$, so that $\tau_0'=\tau_0$ and $L_0=L_0'$. Clearly, if we could estimate the absolute value of the symbol $q_{1234}$ by a product
\begin{equation}\label{D:4:2}
  \sigma_{12}(X_1,X_2) \sigma_{34}(X_3,X_4)
\end{equation}
such that
\begin{equation}
  \label{D:4:4}
  \sigma_{12}(X_1,X_2) = O(\theta_{12})
  \qquad
  \sigma_{34}(X_3,X_4) = O(\theta_{34}),
\end{equation}
where $\theta_{12},\theta_{34}$ are defined as in \eqref{C:40}, then we could apply the Cauchy-Schwarz inequality in \eqref{C:58}, and deduce \eqref{C:80} from Theorem \ref{D:Thm}.

Let us see if this works. Clearly, the absolute value of the symbol $q_{1234}$ is bounded by the sum over $\mu=0,1,2,3$ of the terms \eqref{D:4:2} with
\begin{align*}
  \sigma_{12}(X_1,X_2)
  &=
  \Abs{\Innerprod{\boldsymbol\alpha^\mu\mathbf\Pi(e_1)z_1(X_1)}{\mathbf\Pi(e_2)z_2(X_2)}},
  \\
  \sigma_{34}(X_3,X_4)
  &=
  \Abs{\Innerprod{\boldsymbol\alpha_\mu\mathbf\Pi(e_3)z_3(X_3)}{\mathbf\Pi(e_4)z_4(X_4)}}.
\end{align*}
But these symbols fail to satisfy \eqref{D:4:4} (take $\mu=0$ and recall that $\boldsymbol\alpha^0 = \mathbf I_{4 \times 4}$).

This is not quite the end of the story, however. Let us see what happens for $\mu = 1,2,3$. Then we can use the commutation identity
\begin{equation}\label{D:54}
  \boldsymbol\alpha^j \mathbf\Pi(e) = \mathbf\Pi(-e)\boldsymbol\alpha^j + e^j \mathbf I_{4\times4} \qquad (j=1,2,3; \; e \in \mathbb S^2).
\end{equation}
If it were not for the remainder term $e^j \mathbf I_{4\times4}$, we could apply the following:

\begin{lemma}\label{D:Lemma4}
Let $\boldsymbol \gamma$ be a $4 \times 4$ matrix. A sufficient condition for the symbol 
$$
  \sigma_{12}^{\boldsymbol\gamma}(X_1,X_2)
  =
  \Abs{\Innerprod{\boldsymbol\gamma\mathbf\Pi(e_1)z_1(X_1)}{\mathbf\Pi(e_2)z_2(X_2)}}
$$
to satisfy \eqref{D:4:4}, is that  
\begin{equation}\label{D:50}
  \boldsymbol\gamma\mathbf\Pi(\xi)=\mathbf\Pi(-\xi)\boldsymbol\gamma
  \qquad (\forall \xi \in \R^3).
\end{equation}
\end{lemma}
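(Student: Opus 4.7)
The plan is a short algebraic manipulation that brings the two Dirac projections into the configuration where the spinorial cancellation estimate \eqref{B:17} applies. First, I would use the hypothesis \eqref{D:50} with $\xi = e_1$ to rewrite
\[
  \boldsymbol\gamma \mathbf\Pi(e_1) z_1(X_1) = \mathbf\Pi(-e_1)\boldsymbol\gamma z_1(X_1),
\]
so that the projection now carries the sign $-e_1$, opposite to the sign on the projection in the second slot of the inner product.

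Next, since $\mathbf\Pi(e_2)^* = \mathbf\Pi(e_2)$ by \eqref{B:16}, I would move $\mathbf\Pi(e_2)$ across the inner product and compose it with $\mathbf\Pi(-e_1)$:
\[
  \biginnerprod{\boldsymbol\gamma\mathbf\Pi(e_1)z_1(X_1)}{\mathbf\Pi(e_2)z_2(X_2)}
  = \biginnerprod{\mathbf\Pi(e_2)\mathbf\Pi(-e_1)\boldsymbol\gamma z_1(X_1)}{z_2(X_2)}.
\]
The composition $\mathbf\Pi(e_2)\mathbf\Pi(-e_1)$ has arguments of opposite sign, which is exactly the situation of \eqref{B:17}; applying that estimate gives
\[
  \bigabs{\mathbf\Pi(e_2)\mathbf\Pi(-e_1)\boldsymbol\gamma z_1(X_1)}
  \lesssim \abs{\boldsymbol\gamma z_1(X_1)}\,\vangle(e_1,e_2)
  \lesssim \theta_{12},
\]
where I used $\abs{z_1}=1$ and the fact that $\boldsymbol\gamma$ is a fixed matrix with bounded operator norm. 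Cauchy--Schwarz on the inner product together with $\abs{z_2}=1$ then yields $\sigma_{12}^{\boldsymbol\gamma}(X_1,X_2) \lesssim \theta_{12}$, which is the required \eqref{D:4:4}.

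There is no substantive obstacle: the entire argument is two lines of algebra followed by one invocation of \eqref{B:17}. Conceptually, the hypothesis \eqref{D:50} is precisely the algebraic condition that $\boldsymbol\gamma$ interchanges the two eigenspaces of the symbol $\xi^j\boldsymbol\alpha_j$ of $-i\boldsymbol\alpha^j\partial_j$, and this is exactly what forces the two projections flanking $\boldsymbol\gamma$ to end up in the opposite-sign configuration where the spinorial null cancellation is visible. Note in passing that this gives the reason why $\boldsymbol\gamma = \boldsymbol\alpha^0 = \mathbf I_{4\times 4}$ fails: the identity commutes with $\mathbf\Pi(\xi)$ rather than intertwining it with $\mathbf\Pi(-\xi)$, and no angular gain is available — which is exactly the $\mu=0$ obstruction flagged in the text just before the lemma.
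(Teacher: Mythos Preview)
Your proof is correct and essentially identical to the paper's own argument: use \eqref{D:50} to convert $\boldsymbol\gamma\mathbf\Pi(e_1)$ into $\mathbf\Pi(-e_1)\boldsymbol\gamma$, move $\mathbf\Pi(e_2)$ across by self-adjointness \eqref{B:16}, and then invoke \eqref{B:17}. The paper simply compresses this into one displayed line.
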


\begin{proof}
By \eqref{B:16}, \eqref{B:17} and  \eqref{D:50},
$$
  \Abs{\innerprod{\boldsymbol\gamma \mathbf\Pi(e_1) z_1}{\mathbf\Pi(e_2) z_2}}
  =
  \Abs{\innerprod{ \mathbf\Pi(e_2)\mathbf\Pi(-e_1) \boldsymbol\gamma z_1}{z_2}}
  \lesssim \theta(e_1,e_2) \abs{\boldgamma z_1}\abs{z_2},
$$
so \eqref{D:4:4} follows, since $\abs{z_1}=\abs{z_2}=1$.
\end{proof}

For the simpler Dirac-Klein-Gordon system (D-K-G), the analogue of \eqref{C:12} is obtained by replacing the $\boldsymbol\alpha$'s in \eqref{C:16} by $\boldsymbol\beta$, which does satisfy \eqref{D:50}. This was used in \cite{Selberg:2007d} to prove almost optimal local well-posedness for D-K-G in 3D. (Essentially this reduces to Theorem \ref{D:Thm}.)

For M-D, the part of \eqref{C:16} which corresponds to the sum over $\mu=1,2,3$, and which does not take into account the remainder term in \eqref{D:54}, can be treated in the same way as D-K-G. The crucial point, however, is that the remainder term can be combined with the term corresponding to $\mu=0$ in \eqref{C:16}, to produce a more complicated null structure, which is not bilinear, but quadrilinear; see section \ref{D:60}. In view of this, we cannot just rely on standard $L^2$ bilinear estimates such as Theorem \ref{D:Thm}, although these certainly play an important role. In addition, we will use a number of modified bilinear estimates proved by the third author in \cite{Selberg:2008a}. These estimates are recalled in section \ref{J}.

To end this section, we recall the standard $L^2$ bilinear estimates of the form
\begin{equation}\label{D:10}
  \Bignorm{ \Proj_{K^{\pm_0}_{N_0,L_0}} \left( u_1 \overline{u_2} \right) }
  \le C
  \norm{u_1} \norm{u_2}.
\end{equation}

We have the following:

\begin{theorem}\label{M:Thm} With notation as in \eqref{B:200} (indexed by $1,2$), the estimate \eqref{D:10} holds with
\begin{align}
\label{M:10}
  C
  &\sim \bigl( \Nmin^{012}\Nmin^{12} L_1 L_2 \bigr)^{1/2},
  \\
  \label{M:14}
  C
  &\sim \bigl( \Nmin^{012}\Nmin^{0j} L_0 L_j \bigr)^{1/2} \qquad (j=1,2),
  \\
  \label{M:17}
  C
  &\sim \bigl( N_0\Nmin^{12} \Lmin^{012} \Lmed^{012} \bigr)^{1/2},
  \\
  \label{M:18}
  C
  &\sim \bigl( (\Nmin^{012})^3 \Lmin^{012} \bigr)^{1/2},
\end{align}
for any choice of signs $\pm_0,\pm_1,\pm_2$.
\end{theorem}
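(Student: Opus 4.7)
My plan is to prove each of the four bounds in Theorem~\ref{M:Thm} by the standard ``Cauchy--Schwarz plus slice--measure'' strategy that goes back to Foschi--Klainerman. By Plancherel and the convolution identity \eqref{B:2}, the bilinear estimate \eqref{D:10} is dual to a trilinear estimate for the form
$$
  T(u_0,u_1,u_2) = \int \widetilde{u_0}(X_0)\widetilde{u_1}(X_1)\widetilde{u_2}(X_2)\,\delta(X_0 - X_1 + X_2)\, dX_0\, dX_1\, dX_2,
$$
with $\widetilde{u_j}$ supported in $K^{\pm_j}_{N_j,L_j}$. Applying Cauchy--Schwarz in the convolution variable in three essentially different ways---freezing in turn each of $X_0$, $X_1$, $X_2$---gives the three ``slice'' bounds
$$
  C^2 \lesssim \sup_{X_j \in K^{\pm_j}_{N_j,L_j}} \abs{E_j(X_j)},
$$
where $E_j(X_j)$ is the measurable slice of the convolution support obtained by freezing $X_j$. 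Each of the four inequalities \eqref{M:10}--\eqref{M:18} will come from combining one of these three sups with a different extraction of the remaining measure.

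Next I would reduce each slice measure to a shell intersection plus a level-set constraint. For instance, with $X_0$ frozen and $\xi_1$ held fixed, the remaining $\tau_1$ must satisfy both $\tau_1 \pm_1 \abs{\xi_1} = O(L_1)$ and $(\tau_1-\tau_0) \pm_2 \abs{\xi_1-\xi_0} = O(L_2)$, so the $\tau_1$-integration contributes a factor $\lesssim \min(L_1,L_2)$, and the intersection is nonempty only when the resonance function
$$
  \Phi(\xi_0,\xi_1) = \pm_0 \abs{\xi_0} \mp_1 \abs{\xi_1} \pm_2 \abs{\xi_1 - \xi_0}
$$
lies in an interval of length $O(\Lmax^{012})$ about $0$ (this uses the $L_0$ constraint on $X_0$). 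The spatial part of the slice therefore reduces to measuring
$$
  \Omega_{\xi_0,c} = \{\xi_1 : \abs{\xi_1} \sim N_1,\ \abs{\xi_1-\xi_0} \sim N_2,\ \Phi(\xi_0,\xi_1) = c + O(\Lmax^{012})\},
$$
and the same reduction, after relabeling, handles the sups over $X_1$ and $X_2$.

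The crucial geometric input is then the identities \eqref{B:4}--\eqref{B:6}: they identify $\Phi$ with a product of a length scale and the square of the angle between two of the three spatial frequencies, the precise identification depending on the sign configuration. The level-set constraint therefore localizes $\xi_1$ to an angular cap, and combined with the two shell constraints this yields a refined bound
$$
  \abs{\Omega_{\xi_0,c}} \lesssim \Nmin^{012}\,\Nmin^{12}\cdot(\text{angular factor}).
$$
When the angular refinement is unavailable or unneeded, one falls back on the trivial shell-intersection bound $\lesssim (\Nmin^{12})^3$. Matching the three sups against the $\tau$-integration weights then gives \eqref{M:10} and \eqref{M:14} directly from the basic volume bound, while \eqref{M:17} and \eqref{M:18} require the angular sharpening coming from \eqref{B:4}--\eqref{B:6}.

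The main obstacle is the low-output regime $N_0 \ll N_1 \sim N_2$, in which the two shells $\{\abs{\xi_1} \sim N_1\}$ and $\{\abs{\xi_1-\xi_0} \sim N_2\}$ are near-translates of each other and their raw intersection already fills the full shell volume $\sim N_1^3$, much larger than the $(\Nmin^{012})^3 = N_0^3$ appearing in \eqref{M:18}. To recover the $N_0^3$ factor I would genuinely use \eqref{B:4} in the $\pm_1 = \pm_2$ case (resp.~\eqref{B:6} in the mixed case), exploiting that $\Phi$ then controls the angle between $\xi_0$ and $\xi_1 - \xi_0$ rather than between $\xi_1$ and $\xi_1-\xi_0$; passing to polar coordinates centered appropriately and integrating over the resulting thin angular cap is where the bookkeeping becomes heaviest, with a case split on the signs $(\pm_0,\pm_1,\pm_2)$ dictating which of \eqref{B:4} or \eqref{B:6} applies in each piece.
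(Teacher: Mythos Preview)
Your slice-measure framework is the right one and is essentially how \eqref{M:10} is proved in the references, but you have the difficulty ordering of the four estimates reversed, and this leads to a concrete misstep in your plan for \eqref{M:18}.

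The estimate \eqref{M:18} is the \emph{trivial} one and needs no angular input from \eqref{B:4}--\eqref{B:6}. Your last paragraph worries that in the low-output regime $N_0 \ll N_1 \sim N_2$, freezing $X_0$ leaves a $\xi_1$-slice of volume $\sim N_1^3$; that is true, but you yourself noted earlier that you may freeze any of the three variables. Freeze $X_1$ instead: the spatial variable in the slice is then $\xi_0$, which satisfies $\angles{\xi_0}\sim N_0$, giving volume $\lesssim N_0^3$ immediately, while the $\tau_0$-integration contributes $\min(L_0,L_2)$. Freezing $X_2$ likewise gives $N_0^3\min(L_0,L_1)$, and the smaller of the two is $N_0^3\Lmin^{012}$. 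In general, freeze any variable whose elliptic weight is \emph{not} the minimum. This is exactly the ``trivial volume estimate'' the paper invokes; no resonance function, no angular caps, no case split on signs.

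Conversely, \eqref{M:10} is the deep estimate---the dyadic Foschi--Klainerman bilinear bound---and the paper simply cites it. Your analysis via the level sets of $\Phi$ is the right route to a direct proof, with one correction: once $X_0=(\tau_0,\xi_0)$ is frozen, nonemptiness of the $\tau_1$-interval constrains $\pm_1\abs{\xi_1}\mp_2\abs{\xi_1-\xi_0}$ to an interval of length $O(\Lmax^{12})$ centered at the \emph{fixed} number $\tau_0$, not of length $O(\Lmax^{012})$; the $L_0$-constraint is irrelevant here since $\tau_0$ is already pinned. This matters: with $\Lmax^{012}$ you would fail to recover \eqref{M:10} when $L_0$ is the largest weight.

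Finally, \eqref{M:17} requires no separate angular sharpening. Once \eqref{M:10} and \eqref{M:14} are known, split according to which of $L_0,L_1,L_2$ is maximal, apply \eqref{M:10} if $L_0=\Lmax^{012}$ and the appropriate case of \eqref{M:14} otherwise, and invoke the elementary relation $\Nmin^{012}\Nmax^{012}\sim N_0\Nmin^{12}$ of \eqref{C:43}. This is exactly what the paper does.
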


\begin{proof}
By a standard argument (see, for example, \cite[Lemmas 3 and 4]{Selberg:2007d}), \eqref{M:10} follows from the analogous estimates for two solutions of the homogeneous wave equation, proved in \cite[Theorem 12.1]{Foschi:2000}. Alternatively, see \cite{Selberg:2008a} for a short, direct proof of \eqref{M:10}. By duality, \eqref{M:14} follows from \eqref{M:10}. Combining \eqref{M:10} and \eqref{M:14}, and recalling \eqref{C:43}, we then get \eqref{M:17}. Finally, via the Cauchy-Schwarz inequality, \eqref{M:18} reduces to a trivial volume estimate; see \cite[Eq.\ (37)]{Tao:2001}.
\end{proof}

It is now easy to prove Theorem \ref{D:Thm}. The only other ingredient needed is the following more or less standard result, which generalizes the observation, made above, that the angle $\theta_{12}$ must vanish in the null interaction.

\begin{lemma}\label{D:Lemma1} Consider a bilinear interaction $(X_0,X_1,X_2)$, with $\xi_j \neq 0$ for $j=1,2$. Given signs $(\pm_0,\pm_1,\pm_2)$, define the hyperbolic weights as in \eqref{D:3}, and define the angle $\theta_{12} = \theta(\pm_1\xi_1,\pm_2\xi_2)$. Then
\begin{equation}\label{D:30}
  \max\left( \abs{\hypwt_0}, \abs{\hypwt_1}, \abs{\hypwt_2} \right)
  \gtrsim
  \min\left(\abs{\xi_1},\abs{\xi_2}\right)\theta_{12}^2.
\end{equation}
Moreover, if
\begin{equation}\label{D:34}
  \abs{\xi_0} \ll \abs{\xi_1} \sim \abs{\xi_2}
  \qquad \text{and} \qquad
  \pm_1 \neq \pm_2,
\end{equation}
then
\begin{equation}\label{D:31}
  \theta_{12} \sim 1
  \qquad \text{and} \qquad
  \max\left( \abs{\hypwt_0}, \abs{\hypwt_1}, \abs{\hypwt_2} \right)
  \gtrsim
  \min\left(\abs{\xi_1},\abs{\xi_2}\right),
\end{equation}
whereas if \eqref{D:34} does not hold, then
\begin{equation}\label{D:32}
  \max\left( \abs{\hypwt_0}, \abs{\hypwt_1}, \abs{\hypwt_2} \right)
  \gtrsim
  \frac{\abs{\xi_1}\abs{\xi_2}\theta_{12}^2}{\abs{\xi_0}}.
\end{equation}
\end{lemma}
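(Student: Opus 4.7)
The starting point is the identity, from $\tau_0 = \tau_1 - \tau_2$,
$$
  \hypwt_0 - \hypwt_1 + \hypwt_2
  = \pm_0\abs{\xi_0} \mp_1\abs{\xi_1} \pm_2\abs{\xi_2},
$$
which immediately yields
$$
  \max\bigl(\abs{\hypwt_0},\abs{\hypwt_1},\abs{\hypwt_2}\bigr)
  \gtrsim \bigabs{\pm_0\abs{\xi_0} \mp_1\abs{\xi_1} \pm_2\abs{\xi_2}}.
$$
My entire proof reduces to bounding this right-hand side from below by case analysis on the sign pattern, using only the two angle–frequency identities \eqref{B:4} and \eqref{B:6}.

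Suppose first that $\pm_1 = \pm_2$, so $\theta_{12}=\vangle(\xi_1,\xi_2)$ and the right side becomes $\pm_0\abs{\xi_1-\xi_2} \mp_1(\abs{\xi_1}-\abs{\xi_2})$. When the outer sign matches $\pm_1$, this is exactly $\pm\bigl(\abs{\xi_0} - \bigabs{\abs{\xi_1}-\abs{\xi_2}}\bigr)$, and \eqref{B:6} delivers \eqref{D:32} at once; when it opposes, we get the sum $\abs{\xi_0} + \bigabs{\abs{\xi_1}-\abs{\xi_2}}$, and factoring $\abs{\xi_0}^2 - (\abs{\xi_1}-\abs{\xi_2})^2 = 2\abs{\xi_1}\abs{\xi_2}(1-\cos\theta_{12}) \sim \abs{\xi_1}\abs{\xi_2}\theta_{12}^2$ and dividing by $\abs{\xi_0}+\bigabs{\abs{\xi_1}-\abs{\xi_2}} \leq 2\abs{\xi_0}$ again yields \eqref{D:32}. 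Suppose instead $\pm_1 \neq \pm_2$; writing $\eta_j = \pm_j\xi_j$, I have $\theta_{12}=\vangle(\eta_1,\eta_2)$ and $\abs{\xi_0}=\abs{\eta_1+\eta_2}$, and the right side becomes $\pm_0\abs{\eta_1+\eta_2} + \epsilon(\abs{\eta_1}+\abs{\eta_2})$ for a sign $\epsilon \in \{+1,-1\}$ determined by $\pm_1,\pm_2$. Opposing outer signs then give $\abs{\xi_1}+\abs{\xi_2}-\abs{\xi_0} \sim \min(\abs{\xi_1},\abs{\xi_2})\theta_{12}^2$ by \eqref{B:4}; matching signs give $\abs{\xi_0}+\abs{\xi_1}+\abs{\xi_2} \geq \max(\abs{\xi_1},\abs{\xi_2})$.

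To finish I match these four sub-bounds to the three claims. For \eqref{D:31}, the hypothesis \eqref{D:34} puts us in the mixed-sign case with $\abs{\eta_1+\eta_2} \ll \abs{\eta_1} \sim \abs{\eta_2}$, which forces $\cos\theta_{12}$ close to $-1$ and hence $\theta_{12}\sim 1$; both mixed-sign sub-bounds then dominate $\min(\abs{\xi_1},\abs{\xi_2})$, giving \eqref{D:31}. For \eqref{D:32} with \eqref{D:34} failing, the same-sign case is already done; in the mixed-sign case the failure means either $\abs{\xi_1}\not\sim\abs{\xi_2}$ or $\abs{\xi_0}\gtrsim\min(\abs{\xi_1},\abs{\xi_2})$, and together with $\abs{\xi_0}\leq\abs{\xi_1}+\abs{\xi_2}$ both options force $\abs{\xi_0}\sim\max(\abs{\xi_1},\abs{\xi_2})$, whereupon both sub-bounds match $\abs{\xi_1}\abs{\xi_2}\theta_{12}^2/\abs{\xi_0}$ up to constants (using $\theta_{12}\leq\pi$ in the matching-sign sub-case). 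Finally \eqref{D:30} drops out of either \eqref{D:31} or \eqref{D:32} via the triangle bound $\abs{\xi_1}\abs{\xi_2}/\abs{\xi_0} \gtrsim \min(\abs{\xi_1},\abs{\xi_2})$. There is no genuine obstacle beyond carefully organizing the sign patterns and selecting between \eqref{B:4} and \eqref{B:6} in each; these two identities do all of the geometric work.
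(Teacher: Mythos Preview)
Your proof is correct and follows essentially the same route as the paper indicates: the paper does not prove this lemma in-line (it cites \cite{Selberg:2008a}), but the identity \eqref{F:0} and Table~\ref{F:Table} in Section~\ref{F} encode exactly your case analysis on the sign triple, reducing everything to \eqref{B:4} and \eqref{B:6}.

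One small imprecision worth noting: in the same-sign case $\pm_1=\pm_2$, your dichotomy ``outer sign matches $\pm_1$'' versus ``opposes'' does not by itself determine whether you land on $\abs{\xi_0}-\bigabs{\abs{\xi_1}-\abs{\xi_2}}$ or $\abs{\xi_0}+\bigabs{\abs{\xi_1}-\abs{\xi_2}}$; which one you get also depends on the sign of $\abs{\xi_1}-\abs{\xi_2}$ (e.g.\ $(\pm_0,\pm_1,\pm_2)=(+,+,+)$ with $\abs{\xi_1}<\abs{\xi_2}$ yields the sum, not the difference). Since you handle both the difference and the sum, this does not affect correctness, but the case labels are not quite accurate as written.
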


See \cite{Selberg:2008a} for a proof.

We can now prove Theorem \ref{D:Thm}. By Lemma \ref{D:Lemma1}, $\theta_{12} \lesssim (\Lmax^{012}/\Nmin^{12})^{1/2}$, hence
\begin{equation}\label{D:40}
\begin{aligned}
  \Bignorm{\Proj_{K^{\pm_0}_{N_0,L_0}} \mathfrak B_{\theta_{12}}(u_1,u_2)}
  &\lesssim \left( \frac{\Lmax^{012}}{\Nmin^{12}} \right)^{1/2}
  \Bignorm{ \Proj_{K^{\pm_0}_{N_0,L_0}} \left( u_1 \overline{u_2} \right) }
  \\
  &\lesssim \left( \frac{\Lmax^{012}}{\Nmin^{12}} \right)^{1/2}
  \bigl( N_0\Nmin^{12} \Lmin^{012} \Lmed^{012} \bigr)^{1/2}
  \norm{u_1}\norm{u_2},
\end{aligned}
\end{equation}
where we used \eqref{M:17} from Theorem \ref{M:Thm} to get the last inequality. Simplifying, we get Theorem \ref{D:Thm}.

We remark that Theorem \ref{M:Thm} is related to the so-called null form estimates first investigated in \cite{Klainerman:1993}, and subsequently in numerous other papers by various authors; see \cite{Foschi:2000} for a survey. For this reason, we call $\mathfrak B_{\theta_{12}}$ a \emph{null form}.

Later, we shall also use the related null form $\mathfrak B_{\theta_{12}}'$ corresponding to a product without conjugation. Let us write out both definitions here for easy reference:
\begin{align}
  \label{D:1:1}
  \mathcal F \mathfrak B_{\theta_{12}}(u_1,u_2)
  (X_0)
  &= \int
  \theta(\pm_1\xi_1,\pm_2\xi_2)\,
  \widetilde{u_1}(X_1)\,
  \widetilde{u_2}(X_2)
  \, d\mu^{12}_{X_0},
  \\
  \label{D:1:2}
  \mathcal F \mathfrak B'_{\theta_{12}}(u_1,u_2)
  (X_0)
  &= \int
  \theta(\pm_1\xi_1,\pm_2\xi_2)\,
  \widetilde{u_1}(X_1)\,
  \widetilde{u_2}(X_2)
  \, d\nu^{12}_{X_0},
\end{align}
with notation as in \eqref{B:2} and \eqref{B:2:4}.

\section{Quadrilinear null structure in Maxwell-Dirac}\label{D:60}

Consider the symbol
$$
  q_{1234} = \innerprod{\boldsymbol\alpha^\mu\mathbf\Pi(e_1)z_1}{\mathbf\Pi(e_2)z_2}
  \innerprod{\boldsymbol\alpha_\mu\mathbf\Pi(e_3)z_3}{\mathbf\Pi(e_4)z_4},
$$
appearing in \eqref{C:58}. Here $e_1,\dots,e_4 \in \R^3$ and $z_1,\dots,z_4 \in \C^4$ are unit vectors. The null structure will be expressed in terms of the angles
$$
  \theta_{jk} = \vangle(e_j,e_k),
$$
six of which are distinct. We shall refer to the index pairs 12 and 34 as the \emph{internal pairs}, and the angles $\theta_{12}$ and $\theta_{34}$ as the \emph{internal angles}. Angles between vectors from different internal pairs are then called \emph{external angles}. So the external angles are $\theta_{13}$, $\theta_{14}$, $\theta_{23}$ and $\theta_{24}$. Let us denote their minimum by
\begin{equation}\label{D:70}
  \phi = \min \left\{ \theta_{13}, \theta_{14}, \theta_{23}, \theta_{24} \right\}.
\end{equation}

\begin{lemma}\label{D:Lemma2}
With notation as above,
\begin{equation}\label{D:74}
  \abs{q_{1234}}
  \lesssim
  \theta_{12}\theta_{34} + \phi \max(\theta_{12},\theta_{34}) + \phi^2,
\end{equation}
for all unit vectors $e_1,\dots,e_4 \in \R^3$ and $z_1,\dots,z_4 \in \C^4$. \end{lemma}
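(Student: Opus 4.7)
The plan is to decompose each spinor inner product $\innerprod{\boldsymbol\alpha^j w_a}{w_b}$ (for $j=1,2,3$ and $w_a = \mathbf\Pi(e_a) z_a$) into a term proportional to $e_a^j$ plus a remainder of size $\theta_{ab}$ whose vector of components is orthogonal to $e_a$ in $\R^3$. Since for each internal pair there are two choices ($e_1$ or $e_2$ for the 12-pair; $e_3$ or $e_4$ for the 34-pair), this gives four decompositions of $q_{1234}$, and selecting the one that exposes the smallest external angle produces the bound involving $\phi$.

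First, the commutation identity \eqref{D:54} combined with $\mathbf I_{4 \times 4} - \mathbf\Pi(e_a) = \mathbf\Pi(-e_a)$ yields the operator identity $(\boldsymbol\alpha^j - e_a^j \mathbf I_{4 \times 4}) w_a = \mathbf\Pi(-e_a)(\boldsymbol\alpha^j + e_a^j) z_a$. Defining $R^j_{12}(e_1) = \innerprod{(\boldsymbol\alpha^j - e_1^j) w_1}{w_2}$ (and the three analogous quantities obtained by swapping the index within a pair), the estimate \eqref{B:17} together with the self-adjointness of $\mathbf\Pi$ and $\abs{z_a}=1$ gives $\abs{R^j_{12}(e_a)} \lesssim \theta_{12}$, and likewise $\abs{R^j_{34}(e_b)} \lesssim \theta_{34}$. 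Moreover, since $e_a \cdot \boldsymbol\alpha = 2\mathbf\Pi(e_a) - \mathbf I_{4 \times 4}$ acts as the identity on the range of $\mathbf\Pi(e_a)$, one has $(e_a^j \boldsymbol\alpha^j - \mathbf I_{4 \times 4}) w_a = 0$, which implies the key orthogonality $e_a^j R^j_{12}(e_a) = 0$, and analogously for the 34-pair.

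Using $\boldsymbol\alpha^0 = \mathbf I_{4 \times 4}$ and the metric $\diag(-1,1,1,1)$, one has $q_{1234} = -J^0_{12} J^0_{34} + \sum_j J^j_{12} J^j_{34}$ where $J^\mu_{ab} = \innerprod{\boldsymbol\alpha^\mu w_a}{w_b}$. Fix $i \in \{1,2\}$ and $k \in \{3,4\}$, substitute $J^j_{12} = e_i^j J^0_{12} + R^j_{12}(e_i)$ and $J^j_{34} = e_k^j J^0_{34} + R^j_{34}(e_k)$, and expand:
$$
q_{1234} = (e_i \cdot e_k - 1) J^0_{12} J^0_{34} + J^0_{12} \bigl(e_i^j R^j_{34}(e_k)\bigr) + J^0_{34} \bigl(e_k^j R^j_{12}(e_i)\bigr) + R^j_{12}(e_i)\, R^j_{34}(e_k).
$$
Since $\abs{J^0_{ab}} \le 1$ and $\abs{e_i \cdot e_k - 1} = 2\sin^2(\theta_{ik}/2) \lesssim \theta_{ik}^2$, the leading term is $O(\theta_{ik}^2)$. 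For the first cross term, the orthogonality $e_k^j R^j_{34}(e_k) = 0$ lets me replace $e_i$ by its component transverse to $e_k$, of norm $\sin\theta_{ik}$, giving a bound $\lesssim \theta_{ik}\theta_{34}$; the second cross term is bounded by $\theta_{ik}\theta_{12}$ symmetrically; and the remainder-remainder term is $\lesssim \theta_{12}\theta_{34}$.

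Combining, for every $(i,k) \in \{1,2\}\times\{3,4\}$,
$$
\abs{q_{1234}} \lesssim \theta_{ik}^2 + \theta_{ik}(\theta_{12}+\theta_{34}) + \theta_{12}\theta_{34}.
$$
Selecting $(i,k)$ so that $\theta_{ik} = \phi$ yields the claimed inequality, since $\phi(\theta_{12}+\theta_{34}) \le 2\phi\max(\theta_{12},\theta_{34})$. No serious obstacle is anticipated; the argument is purely algebraic, resting on \eqref{D:54}, \eqref{B:17}, and the elementary identity $\abs{e_i - (e_i\cdot e_k) e_k} = \sin\theta_{ik}$. The only point worth emphasizing is the \emph{freedom} to choose which of $e_1, e_2$ (and which of $e_3, e_4$) to decompose with respect to, which is what allows the minimum external angle $\phi$ to appear in place of any single external angle.
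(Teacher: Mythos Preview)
Your proof is correct and follows essentially the same route as the paper: both use the commutation identity \eqref{D:54} to split $q_{1234}$ into the four-term sum $(e_i\cdot e_k - 1)J^0_{12}J^0_{34} + \text{two cross terms} + \text{remainder}\cdot\text{remainder}$, bound each piece via \eqref{B:17}, and then exploit the freedom (coming from the self-adjointness of the $\boldsymbol\alpha^\mu$) to pick whichever pair $(i,k)\in\{1,2\}\times\{3,4\}$ realizes $\theta_{ik}=\phi$. The one cosmetic difference is in the cross-term estimate: the paper further splits via $e_i^j\boldsymbol\alpha_j = \mathbf\Pi(e_i)-\mathbf\Pi(-e_i)$ and applies \eqref{B:17} twice, whereas you use the orthogonality $e_k^j R^j_{34}(e_k)=0$ to replace $e_i$ by its component transverse to $e_k$---a slightly slicker way to extract the same factor $\theta_{ik}$.
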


\begin{proof}
By idempotency we may replace $z_j$ in $q_{1234}$ by $z_j' = \mathbf\Pi(e_j)z_j$, for $j=1,\dots,4$. We do this for notational convenience, the advantage being that
\begin{equation}\label{D:76}
  z_j' = \mathbf\Pi(e_j)z_j' \qquad (j=1,\dots,4).
\end{equation}
Note also that $\abs{z_j'} \le \abs{z_j} = 1$.

Let us first assume $\phi = \theta_{13}$. Apply \eqref{D:54} in $q_{1234}$, and make use of \eqref{D:76} and the identities \eqref{B:14}--\eqref{B:16}. Note the implicit summation in $q_{1234}$, and recall that $\boldsymbol\alpha_0 = - \boldsymbol\alpha^0 = \boldsymbol I_{4 \times 4}$. We thus get
$$
  q_{1234}
  = A + B + C + D,
$$
where
\begin{align*}
  A &= (e_1 \cdot e_3-1) \innerprod{z'_1}{z'_2}
  \innerprod{z'_3}{z'_4},
  \\
  B &=
  \innerprod{\mathbf\Pi(-e_1)\boldsymbol\alpha^j z'_1}{z'_2}
  \innerprod{\mathbf\Pi(-e_3)\boldsymbol\alpha_j z'_3}{z'_4},
  \\
  C &=
  \innerprod{e_1^j z'_1}{z'_2}
  \innerprod{\mathbf\Pi(-e_3)\boldsymbol\alpha_j z'_3}{z'_4},
  \\
  D &=
  \innerprod{\mathbf\Pi(-e_1)\boldsymbol\alpha_j z'_1}{z'_2}
  \innerprod{e_3^j z'_3}{z'_4}.
\end{align*}
Here $e_1^j$ are the coordinates of $e_1$, and we implicitly sum over $j=1,2,3$.

Clearly,
\begin{equation}\label{D:80}
  \abs{A} \le \abs{1-\cos\theta_{13}}
  \lesssim \theta_{13}^2.
\end{equation}
By \eqref{D:76} and \eqref{B:17},
\begin{equation}\label{D:82}
  \abs{B} \lesssim \theta_{12}\theta_{34}.
\end{equation}
Using \eqref{B:14} we write $C = C_1-C_2$, where
\begin{align*}
  C_1 &= \innerprod{z'_1}{z'_2}
  \innerprod{\mathbf\Pi(-e_3) \mathbf\Pi(e_1) z'_3}{z'_4},
  \\
  C_2 &= \innerprod{z'_1}{z'_2}
  \innerprod{\mathbf\Pi(-e_3)\mathbf\Pi(-e_1) z'_3}{z'_4}.
\end{align*}
By \eqref{B:16} and \eqref{B:17},
\begin{align*}
  \abs{C_1} \le \abs{\innerprod{\mathbf\Pi(-e_3) \mathbf\Pi(e_1) z'_3}{\mathbf\Pi(-e_3) \mathbf\Pi(e_4) z'_4}}
  \lesssim \theta_{13} \theta_{34},
  \\
  \abs{C_2} \le \abs{\innerprod{\mathbf\Pi(-e_1) \mathbf\Pi(e_3) z'_3}{\mathbf\Pi(-e_3) \mathbf\Pi(e_4) z'_4}}
  \lesssim \theta_{13} \theta_{34},
\end{align*}
hence
\begin{equation}\label{D:84}
  \abs{C} \lesssim \theta_{13}\theta_{34}.
\end{equation}
By symmetry with $C$,
\begin{equation}\label{D:86}
  \abs{D} \lesssim \theta_{13}\theta_{12}.
\end{equation}

Combining \eqref{D:80}--\eqref{D:86} gives $\abs{q_{1234}} \lesssim \theta_{12}\theta_{34} + \theta_{13}\theta_{12} + \theta_{13}\theta_{34} + \theta_{13}^2$, which proves the lemma under the assumption $\phi=\theta_{13}$. To remove that assumption, we observe that $q_{1234}$ can be written in four different ways, since the $\boldsymbol\alpha^\mu$ are hermitian:
\begin{align*}
  q_{1234} &= \innerprod{\boldsymbol\alpha^\mu\mathbf\Pi(e_1)z'_1}{\mathbf\Pi(e_2)z'_2}
  \innerprod{\boldsymbol\alpha_\mu\mathbf\Pi(e_3)z'_3}{\mathbf\Pi(e_4)z'_4}
  \\
  &= \innerprod{\boldsymbol\alpha^\mu\mathbf\Pi(e_1)z'_1}{\mathbf\Pi(e_2)z'_2}
  \innerprod{\mathbf\Pi(e_3)z'_3}{\boldsymbol\alpha_\mu\mathbf\Pi(e_4)z'_4}
  \\
  &= \innerprod{\mathbf\Pi(e_1)z'_1}{\boldsymbol\alpha^\mu\mathbf\Pi(e_2)z'_2}
  \innerprod{\boldsymbol\alpha_\mu\mathbf\Pi(e_3)z'_3}{\mathbf\Pi(e_4)z'_4}
  \\
  &= \innerprod{\mathbf\Pi(e_1)z'_1}{\boldsymbol\alpha^\mu\mathbf\Pi(e_2)z'_2}
  \innerprod{\mathbf\Pi(e_3)z'_3}{\boldsymbol\alpha_\mu\mathbf\Pi(e_4)z'_4},
\end{align*}
and by the same argument as above, these expressions can be used to prove the lemma in the cases where the minimum $\phi$ of the external angles is $\theta_{13}$, $\theta_{14}$, $\theta_{23}$ or $\theta_{24}$, respectively.
\end{proof}

When we apply this lemma, it is natural to distinguish the cases
\begin{gather}
  \label{D:97}
  \phi \lesssim \min(\theta_{12},\theta_{34}),
  \\
  \label{D:98}
   \min(\theta_{12},\theta_{34}) \ll \phi \lesssim \max(\theta_{12},\theta_{34}),
  \\
  \label{D:99}
  \max(\theta_{12},\theta_{34}) \ll \phi,
\end{gather}
which imply, respectively, that the first, second or third term in \eqref{D:74} dominates.

In certain situations, the last two cases can be treated simultaneously, by virtue of the following simplified version of Lemma \ref{D:Lemma2}.

\begin{lemma}\label{D:Lemma6} In the cases \eqref{D:98} and \eqref{D:99},
$$
  \abs{q_{1234}}
  \lesssim
  \theta_{13}\theta_{24}.
$$
\end{lemma}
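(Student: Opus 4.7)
The plan is to start from the bound of Lemma \ref{D:Lemma2},
$$
  \abs{q_{1234}} \lesssim \theta_{12}\theta_{34} + \phi\max(\theta_{12},\theta_{34}) + \phi^2,
$$
and dispose of the cases \eqref{D:98} and \eqref{D:99} by bounding whichever term dominates on the right-hand side. By definition $\phi = \min\{\theta_{13},\theta_{14},\theta_{23},\theta_{24}\}$, so one has the free inequalities $\phi \le \theta_{13}$ and $\phi \le \theta_{24}$ throughout.

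Case \eqref{D:99}, where $\max(\theta_{12},\theta_{34}) \ll \phi$, is immediate: each of the three summands in Lemma \ref{D:Lemma2} is $\lesssim \phi^2$, and $\phi^2 \le \theta_{13}\theta_{24}$ from the free inequalities above.

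The substantive part is case \eqref{D:98}, where the middle term dominates, so it suffices to prove $\phi\max(\theta_{12},\theta_{34}) \lesssim \theta_{13}\theta_{24}$. Without loss of generality suppose $\theta_{12} \le \theta_{34}$ (the case $\theta_{34} \le \theta_{12}$ is the same after swapping the internal pairs $(1,2)$ and $(3,4)$), so that $\theta_{12} \ll \phi$. The key step is to exploit the closeness of $e_1$ and $e_2$: the spherical triangle inequality gives $\abs{\theta_{1k}-\theta_{2k}} \le \theta_{12} \ll \phi$ for $k=3,4$, so $\theta_{13} \sim \theta_{23}$ and $\theta_{14} \sim \theta_{24}$. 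Applying the triangle inequality once more yields
$$
  \theta_{34} \le \theta_{13} + \theta_{14} \lesssim \theta_{13}+\theta_{24} \lesssim \max(\theta_{13},\theta_{24}).
$$
Combining this with $\phi \le \min(\theta_{13},\theta_{24})$ gives $\phi\theta_{34} \lesssim \min(\theta_{13},\theta_{24})\cdot\max(\theta_{13},\theta_{24}) = \theta_{13}\theta_{24}$, as required.

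The main obstacle, as anticipated, is the asymmetric regime \eqref{D:98}, in which one internal angle is much smaller than every external angle while the other can be comparable to or exceed them. The crucial observation is that the smallness of this internal angle forces the two corresponding directions to coalesce on the scale $\phi$, converting each external angle measured from one vector into an external angle measured from its partner up to a comparable constant — exactly the bridge needed to replace $\theta_{14}$ by $\theta_{24}$ in the triangle bound for $\theta_{34}$. Once this replacement is available the rest is a one-line min/max split.
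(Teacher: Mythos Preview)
Your proof is correct and follows essentially the same route as the paper's own proof: both invoke Lemma~\ref{D:Lemma2}, handle case~\eqref{D:99} via $\phi^2 \le \theta_{13}\theta_{24}$, and in case~\eqref{D:98} reduce by symmetry to $\theta_{12} \le \theta_{34}$, use the spherical triangle inequality together with $\theta_{12} \ll \phi$ to get $\theta_{13}\sim\theta_{23}$ and $\theta_{14}\sim\theta_{24}$, and conclude via $\theta_{34} \le \theta_{13}+\theta_{14}$ and the min/max split.
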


\begin{proof}
Note that in cases \eqref{D:98} and \eqref{D:99},
\begin{equation}\label{D:127}
  \min(\theta_{12},\theta_{34}) \ll \phi \le \theta_{13}, \theta_{14}, \theta_{23}, \theta_{24}.
\end{equation}
In case \eqref{D:99} the dominant term in \eqref{D:74} is $\phi^2$, hence the lemma follows from \eqref{D:127}. To handle case \eqref{D:98}, note that by symmetry we may assume that $\theta_{12} \le \theta_{34}$. Then we claim that \eqref{D:127} implies
\begin{equation}\label{D:100}
  \theta_{13} \sim \theta_{23},
  \qquad
  \theta_{14} \sim \theta_{24}.
\end{equation}
Granting this, we next notice that since we are in case \eqref{D:98}, and since $\theta_{12} \le \theta_{34}$, the dominant term in \eqref{D:74} is $\phi\theta_{34}$, but $\theta_{34} \le \theta_{13} + \theta_{14} \lesssim \max(\theta_{13},\theta_{24})$ by \eqref{D:100}, and $\phi \le \min(\theta_{13},\theta_{24})$ by \eqref{D:127}, so the lemma holds.

It remains to prove \eqref{D:100}. By the triangle inequality for distances on the unit sphere, $\theta_{13} \le \theta_{12} + \theta_{23}$, and since $\theta_{12} \ll \theta_{13}$, it follows that $\theta_{13} \lesssim \theta_{23}$. Repeating this argument with the index 13 replaced by $23$, $14$ and $24$, we get \eqref{D:100}, so the claim is proved. This completes the proof of the lemma.
\end{proof}

Our general strategy is now to apply the Cauchy-Schwarz inequality in various ways to reduce \eqref{C:80} to bilinear $L^2$ estimates. The standard estimates suffice in the particularly easy case \eqref{D:97}, which we dispose of straight away: The term $\theta_{12}\theta_{34}$ then dominates in \eqref{D:74}, hence, with notation as in \eqref{D:1:1},
\begin{equation}\label{D:102}
\begin{aligned}
  J_{\boldN,\boldL}^{\mathbf\Sigma}
  &\lesssim
  \Bignorm{T^{\pm_0}_{L_0,L_0'}
  \mathcal F \, \Proj_{K^{\pm_0}_{N_0,L_0'}}
  \mathfrak B_{\theta_{12}}(u_1,u_2)}
  \Bignorm{\Proj_{K^{\pm_0}_{N_0,L_0}}
  \mathfrak B_{\theta_{34}}(u_3,u_4)}
  \\
  &\lesssim
  \Bignorm{\Proj_{K^{\pm_0}_{N_0,L_0'}}
  \mathfrak B_{\theta_{12}}(u_1,u_2)}
  \Bignorm{\Proj_{K^{\pm_0}_{N_0,L_0}}
  \mathfrak B_{\theta_{34}}(u_3,u_4)}
  \\
  &\lesssim
   \left(N_0L_0'L_1L_2\right)^{1/2}\left(N_0L_0L_3L_4\right)^{1/2}
  \norm{u_1} \norm{u_2} \norm{u_3} \norm{u_4},
\end{aligned}
\end{equation}
where we applied the Cauchy-Schwarz inequality, Lemma \ref{D:Lemma3} and Theorem \ref{D:Thm}.

This proves Theorem \ref{C:Thm2} in the case \eqref{D:97}, so henceforth we assume that \eqref{D:98} or \eqref{D:99} holds. Then the standard $L^2$ bilinear estimates do not suffice. In addition we need a number of modified bilinear estimates proved by the third author in \cite{Selberg:2008a}, which we recall in the next section. In section \ref{F} we fill in some details about bilinear interactions, and then in sections \ref{G} and \ref{E} we finally prove Theorem \ref{C:Thm2}.

\section{Additional bilinear estimates}\label{J}

Here we recall a number of bilinear estimates proved in \cite{Selberg:2008a}. For more about the motivation behind these estimates, see \cite{Selberg:2008a}.

\subsection{Anisotropic bilinear estimate}

\begin{theorem}\label{J:Thm3}
Let $\omega \in \mathbb S^2$, $I \subset \R$ a compact interval. In addition to the usual assumption \eqref{B:200}, assume now
$$
  \supp \widehat{u_1} \subset
  \left\{ (\tau,\xi) \colon
  \theta(\xi,\omega^\perp) \ge \alpha \right\}
  \qquad \text{for some $\;0 < \alpha \ll 1$}.
$$
Then
$$
  \norm{\Proj_{\xi_0 \cdot \omega \in I}
  (u_1 u_2)}
  \lesssim \left( \frac{\abs{I}\Nmin^{12}L_1L_2 }{\alpha} \right)^{1/2}
  \norm{u_1}
  \norm{u_2}.
$$
The same estimate holds for $\norm{\Proj_{\xi_1 \cdot \omega \in I} u_1 \cdot u_2}$ and $\norm{u_1 \cdot \Proj_{\xi_2 \cdot \omega \in I} u_2}$. 
\end{theorem}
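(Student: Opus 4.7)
The plan is to slice in the $\omega$-direction of position space, reducing the estimate to a family of bilinear estimates for $2+1$-dimensional Klein--Gordon-like waves parametrised by the ``mass'' $s_j = \xi_j \cdot \omega$. Writing $x = x^\parallel \omega + x^\perp$ with $x^\parallel = x \cdot \omega$ and $x^\perp \in \omega^\perp$, I would let $U_j(s, t, x^\perp) \simeq \int e^{-i s x^\parallel} u_j(t, x^\parallel \omega + x^\perp) \, d x^\parallel$ denote the partial Fourier transform of $u_j$ in $x^\parallel$. By Plancherel in $x^\parallel$, $\int \norm{U_j(s, \cdot, \cdot)}_{L^2_{t, x^\perp}}^2 \, ds \simeq \norm{u_j}^2$, and
\[
\norm{\Proj_{\xi_0 \cdot \omega \in I}(u_1 u_2)}^2 \simeq \int_I d s_0 \, \Bignorm{\int U_1(s_1) U_2(s_0 - s_1) \, d s_1}_{L^2_{t, x^\perp}}^2.
\]
Writing $\xi_j = s_j \omega + \eta_j$ with $\eta_j \in \omega^\perp$, for each fixed $s_j$ the function $U_j(s_j, \cdot, \cdot)$ has Fourier support in $(\tau_j, \eta_j)$ on the thickened Klein--Gordon hyperboloid $\tau_j = \mp_j \sqrt{s_j^2 + \abs{\eta_j}^2} + O(L_j)$ with $\abs{\eta_j} \lesssim N_j$.

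I would then apply Minkowski's inequality in $s_1$ inside the $L^2_{t, x^\perp}$ norm, followed by the $2+1$-dimensional bilinear $L^2$ estimate
\[
\norm{U_1(s_1) U_2(s_0 - s_1)}_{L^2_{t, x^\perp}} \lesssim \Bigl( \frac{\Nmin^{12} L_1 L_2}{\alpha} \Bigr)^{1/2} \norm{U_1(s_1)}_{L^2} \norm{U_2(s_0 - s_1)}_{L^2}.
\]
The factor $1/\alpha$ is the essential content of the theorem: the hypothesis $\theta(\xi_1, \omega^\perp) \ge \alpha$ forces $\abs{s_1}/\angles{\xi_1} \gtrsim \alpha$, so the Klein--Gordon hyperboloid of $U_1(s_1)$ lies at angle $\gtrsim \alpha$ from the light cone $\tau_1 = \mp_1 \abs{\eta_1}$ in $(\tau_1, \eta_1)$-space and is transverse to the hyperboloid of $U_2$. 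This transversality improves the standard $2+1$-dimensional bilinear bound (the analogue of \eqref{M:10}) by the factor $1/\alpha$, through the $2+1$-dimensional analogue of Lemma \ref{D:Lemma1} (compare \cite{Selberg:2008a}). Cauchy--Schwarz in $s_1$ combined with the Plancherel identity above then gives
\[
\Bignorm{\int U_1 U_2 \, d s_1}_{L^2_{t, x^\perp}} \lesssim \Bigl( \frac{\Nmin^{12} L_1 L_2}{\alpha} \Bigr)^{1/2} \norm{u_1} \norm{u_2}
\]
uniformly in $s_0$; squaring and integrating over $s_0 \in I$ produces the factor $\abs{I}$.

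The main obstacle is the slicewise $2+1$-dimensional bilinear Klein--Gordon estimate with the explicit transversality gain $1/\alpha$, which is the new input beyond the standard $3+1$-dimensional estimates of Theorem \ref{M:Thm} and is precisely the kind of modified bilinear estimate established in \cite{Selberg:2008a}. The two input-projected variants follow by the same slicing argument, the slab constraint now being applied directly to $s_1 \in I$ or $s_2 \in I$; the $\abs{I}$ factor then emerges after the Cauchy--Schwarz step from integrating the indicator of $I$ in the projected variable, with the unrestricted variable handled by Plancherel.
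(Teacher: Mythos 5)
The paper does not prove this theorem; Section~6 merely recalls it from \cite{Selberg:2008a}, so there is no in-paper argument to compare your proposal against. Your idea of slicing in the $x \cdot \omega$-direction to reduce to a one-parameter family of $2+1$-dimensional Klein--Gordon bilinear estimates, with ``mass'' $s_j = \xi_j \cdot \omega$, is natural, and the surrounding reductions (Plancherel in $x^\parallel$, Minkowski's inequality, Cauchy--Schwarz in $s_1$, integration over $s_0 \in I$ to produce the factor $\abs{I}$) are structurally sound.

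The gap is that the slicewise estimate, which carries the entire content, is asserted rather than proved, and is simply attributed back to \cite{Selberg:2008a} --- the very reference that proves the theorem under discussion --- which makes the proposal circular where it is not simply incomplete. Worse, the heuristic you give for the $1/\alpha$ factor is quantitatively wrong: when $\abs{s_1} \sim \alpha N_1$ and $\abs{\eta_1} \sim N_1$, the Klein--Gordon sheet $\tau_1 = \mp_1\sqrt{s_1^2 + \abs{\eta_1}^2}$ lies at angle $\sim (s_1/\abs{\eta_1})^2 \sim \alpha^2$ from the light cone $\tau_1 = \mp_1\abs{\eta_1}$, not $\gtrsim \alpha$ as you claim. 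The Cauchy--Schwarz/volume argument you implicitly invoke (``the $2+1$-dimensional analogue of Lemma~\ref{D:Lemma1}'') in fact breaks down: take $N_1 \sim N_2$, $\pm_1 \neq \pm_2$, $s_2 = -s_1$ (so $s_0 = 0 \in I$ and the masses coincide), and $\eta_0 = 0$. Then the two thickened Klein--Gordon hyperboloids are exchanged by $(\tau,\eta) \mapsto (-\tau,-\eta)$, the sup over $(\tau_0,\eta_0)$ of the $2+1$-dimensional convolution intersection is $\sim \Lmin^{12}(\Nmin^{12})^2$, and the resulting Cauchy--Schwarz constant $\sim (\Lmin^{12})^{1/2}\Nmin^{12}$ exceeds $(\Nmin^{12} L_1 L_2/\alpha)^{1/2}$ by a factor $(\alpha \Nmin^{12}/\Lmax^{12})^{1/2}$, which is large precisely in the near-cone regime $\Lmax^{12} \ll \alpha \Nmin^{12}$. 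Whether the slicewise estimate nevertheless holds by a sharper argument is exactly the question that needs answering; Minkowski's inequality is also where information is discarded, since it forces a uniform-in-$s_1$ constant and cannot see that the degenerate slices form a thin set. To salvage this you would need either a genuine proof of the slicewise bound avoiding the sup-area Cauchy--Schwarz near the parallel-hyperboloid configuration, or a direct $3+1$-dimensional argument that does not pass through a pointwise-in-$s_1$ estimate.
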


Here $\omega^\perp \subset \R^3$ is the orthogonal complement of $\omega$, and $\abs{I}$ is the length of $I$.

\subsection{Null form estimate with tube restricition}

Recall that $T_r(\omega) \subset \R^3$, for $r > 0$ and $\omega \in \mathbb S^2$, denotes a tube of radius comparable to $r$ around $\R\omega$. Recall also the definition of the null forms in \eqref{D:1:1} and \eqref{D:1:2}.

\begin{theorem}\label{G:Thm} Let $r > 0$ and $\omega \in \mathbb S^2$. Then with notation as in \eqref{B:200},
\begin{equation}\label{G:16}
  \bignorm{\mathfrak B_{\theta_{12}}(\Proj_{\R \times T_r(\omega)} u_1,u_2)}
  \lesssim \left( r^2 L_1 L_2 \right)^{1/2}
  \norm{u_1}\norm{u_2}.
\end{equation}
Moreover, the same estimate holds for $\mathfrak B_{\theta_{12}}'$.
\end{theorem}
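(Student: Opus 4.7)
The plan is to reduce the estimate to a uniform measure bound in frequency space, exploiting the tube restriction via a change of variables. Writing $\widetilde{u_j} = \chi_{K_{N_j, L_j}^{\pm_j}} F_j$ as in \eqref{B:200} and applying Plancherel together with the Cauchy-Schwarz inequality inside the convolution \eqref{D:1:1}, we obtain
\[
  \bignorm{\mathfrak B_{\theta_{12}}(\Proj_{\R \times T_r(\omega)} u_1, u_2)}^2 \;\le\; \bigl(\sup_{X_0} \mathcal V(X_0)\bigr)\, \norm{u_1}^2 \norm{u_2}^2,
\]
where
\[
  \mathcal V(X_0) \;=\; \int \theta_{12}^2\, \chi_{T_r(\omega)}(\xi_1)\, \chi_{K^{\pm_1}_{N_1,L_1}}(X_1)\, \chi_{K^{\pm_2}_{N_2,L_2}}(X_2)\, d\mu^{12}_{X_0}
\]
with $X_2 = X_1 - X_0$ on the support of $d\mu^{12}_{X_0}$. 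It therefore suffices to show $\mathcal V(X_0) \lesssim r^2 L_1 L_2$ uniformly in $X_0$.

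To establish the measure bound, fix $X_0$ and parametrize $\xi_1 = t\omega + \eta$ with $\eta \in \omega^\perp$ and $|\eta| \lesssim r$. The two hyperbolic-weight constraints $h_j = \tau_j \pm_j |\xi_j| \in [-L_j, L_j]$ restrict the pair $(\tau_1, t)$ (for fixed $\eta$). In the non-degenerate regime, that is, when the Jacobian
\[
  J = \pm_1\, e_1 \cdot \omega \;\mp_2\; e_2 \cdot \omega \qquad (e_j = \xi_j/|\xi_j|)
\]
of the change of variables $(\tau_1,t) \mapsto (h_1, h_2)$ satisfies $|J| \sim 1$, the integrand is confined to a set of $4$-volume $\lesssim r^2 L_1 L_2$, giving the bound immediately (using $\theta_{12}^2 \le 1$). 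The work is in the degenerate regime $|J| \ll 1$, in which $\pm_1 e_1 \cdot \omega \approx \mp_2 e_2 \cdot \omega$; combined with the tube restriction $\pm_1 e_1 \cdot \omega = \pm 1 + O((r/N_1)^2)$, this forces $\xi_2$ to be nearly parallel to $\pm\omega$, so that $\xi_2$ itself lies in a tube $T_{r'}(\omega)$ with $r' \sim r + N_2\theta_{12}$. To handle this case I would abandon the direct change of variables and instead apply a dyadic angular decomposition $u_2 = \sum_{\gamma, \omega_2} u_2^{\gamma, \omega_2}$ around $\mp_2\omega$, extracting a factor $\gamma$ from the null symbol on each piece, and then invoke either the anisotropic estimate of Theorem \ref{J:Thm3} (with reference direction $\omega$, using that $\xi_0 \cdot \omega$ is confined to a short interval by compatibility of $h_1, h_2$) or a bootstrapped tube-version bilinear estimate with the narrower effective tube. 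The dyadic $\gamma$-sum telescopes to produce the factor $r^2 L_1 L_2$ once all the weights are accounted for.

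The unconjugated null form $\mathfrak B'_{\theta_{12}}$ from \eqref{D:1:2} is treated identically, with $X_2 = X_0 - X_1$ in place of $X_2 = X_1 - X_0$; neither the tube geometry nor any of the angle estimates depend on this sign convention. I expect the degenerate-regime analysis to be the main obstacle, as this is precisely where the standard $(h_1, h_2)$-change of variables becomes singular and one has to combine the angular smallness of $\theta_{12}$ with the transverse smallness of the tube to recover the full factor $r^2$; this is the technical heart of how the tube-restricted null form improves on the standard estimate of Theorem \ref{D:Thm}.
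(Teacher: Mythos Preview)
The paper does not actually prove this theorem: it is one of the ``additional bilinear estimates'' that section~\ref{J} explicitly \emph{recalls} from \cite{Selberg:2008a}. The only content the paper adds is two short remarks reconciling the hypothesis $\angles{\xi_j}\sim N_j$ with the hypothesis $\abs{\xi_j}\sim N_j$ used in \cite{Selberg:2008a}, and noting that the $\mathfrak B_{\theta_{12}}$ case follows from the $\mathfrak B'_{\theta_{12}}$ case. So there is no in-paper proof to compare against; any comparison must be to the argument in \cite{Selberg:2008a}.

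As for your proposal on its own merits: the reduction via Cauchy--Schwarz to the uniform volume bound $\mathcal V(X_0)\lesssim r^2L_1L_2$ is sound, and the $(\tau_1,t)\mapsto(h_1,h_2)$ change of variables handles the transversal regime correctly. The gap is in the degenerate regime. There you stop giving an argument and instead announce a plan (angular decomposition of $u_2$, then appeal to Theorem~\ref{J:Thm3} or ``a bootstrapped tube-version bilinear estimate''), asserting that ``the dyadic $\gamma$-sum telescopes'' without showing it. Two concrete problems: first, invoking Theorem~\ref{J:Thm3} or any other result from section~\ref{J} is circular in spirit, since those results come from the same external reference as the theorem you are trying to prove. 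Second, the degenerate case is exactly where the null-form weight $\theta_{12}$ has to do real work---without it the volume bound is simply false---and you have not shown how the factor $\theta_{12}^2$ in $\mathcal V(X_0)$ compensates for the collapsing Jacobian. A self-contained proof must quantify $\theta_{12}$ in terms of the Jacobian defect (both are controlled by the angle between $e_1$ and $e_2$ projected along $\omega$) and show directly that $\theta_{12}^2/|J|$ integrates to the right bound over the $(h_1,h_2)$ region; your sketch does not do this.
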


The key point here is that we are able to exploit concentration of the Fourier supports near a null ray, which is not possible for a standard product like $u_1 \overline{u_2}$.

\begin{remark} In \cite{Selberg:2008a}, \eqref{G:16} is proved under the hypothesis that $\abs{\xi_j} \sim N_j$ on the support of $\widetilde{u_j}$, for $j=1,2$, instead of $\angles{\xi_j} \sim N_j$, as we have here. This only makes as difference if $\Nmin^{12} \sim 1$, however, but in that case, recalling also the standing assumption $L_1,L_2 \ge 1$, \eqref{G:16} follows from
$$
  \bignorm{\Proj_{\R \times T_r(\omega)} u_1 \cdot \overline{u_2}}
  \lesssim \left( r^2 \Nmin^{12} \Lmin^{12} \right)^{1/2}
  \norm{u_1}\norm{u_2}.
$$
This estimate is also proved in \cite{Selberg:2008a}, but requires only $\abs{\xi_j} \lesssim N_j$ for $j=1,2$.
\end{remark}

\begin{remark} In \cite{Selberg:2008a}, \eqref{G:16} is proved for $\mathfrak B_{\theta_{12}}'$, but it is easy to see that this implies the same estimate for $\mathfrak B_{\theta_{12}}$.
\end{remark}

\subsection{Concentration/nonconcentration null form estimate}

In the following refinement of Theorem \ref{G:Thm} we limit attention to interactions which are nearly null, by restricting the symbol in \eqref{D:1:1} and \eqref{D:1:2} to $\theta_{12} \ll 1$; we denote these modified null forms by $\mathfrak B_{\theta_{12} \ll 1}$ and $\mathfrak B'_{\theta_{12} \ll 1}$.
 
\begin{theorem}\label{J:Thm1}
Let $r > 0$, $\omega \in \mathbb S^2$ and $I_0 \subset \R$ a compact interval. Assume that $N_1, N_2 \gg 1$ and that
\begin{equation}\label{J:0:2}
  r \ll \Nmin^{12}.
\end{equation}
Then with notation as in \eqref{B:200},
$$
  \norm{\Proj_{\xi_0 \cdot \omega \in I_0} \mathfrak B_{\theta_{12} \ll 1}(\Proj_{\R \times T_r(\omega)} u_1,u_2)}
  \lesssim \left( r^2 L_1 L_2 \right)^{1/2} 
  \left( \sup_{I_1} \norm{\Proj_{\xi_1 \cdot \omega \in I_1} u_1} \right)
  \norm{u_2},
$$
where the supremum is over all translates $I_1$ of $I_0$. The same holds for $\mathfrak B'_{\theta_{12} \ll 1}$.
\end{theorem}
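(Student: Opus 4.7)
\emph{Plan.} This refines Theorem \ref{G:Thm} by two extra restrictions---output localization $\xi_0 \cdot \omega \in I_0$ and the near-null symbol cut $\theta_{12} \ll 1$---and in return upgrades $\norm{u_1}$ to the supremum $M \equiv \sup_{I_1} \norm{\Proj_{\xi_1 \cdot \omega \in I_1} u_1}$ over translates $I_1$ of $I_0$. My strategy is to decompose $u_2$ into disjoint slabs in $\xi_2 \cdot \omega$ of length $\abs{I_0}$, use the output restriction to couple each slab of $u_2$ to a compatible slab of $u_1$, reduce the resulting diagonal pieces to Theorem \ref{G:Thm}, and sum via an almost-orthogonal argument to exploit the $\ell^2$-summability in $u_2$ while keeping the supremum in $u_1$.

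\emph{Dyadic tile decomposition.} Tile $\R$ by disjoint half-open intervals $J_l = [l\abs{I_0},(l+1)\abs{I_0})$, and write $u_2 = \sum_l u_2^{(l)}$ with $u_2^{(l)} = \Proj_{\xi_2 \cdot \omega \in J_l} u_2$, so $\sum_l \norm{u_2^{(l)}}^2 = \norm{u_2}^2$ by Plancherel. For each $l$, the convolution constraint $\xi_0 = \xi_1 - \xi_2$ together with $\xi_0 \cdot \omega \in I_0$ forces $\xi_1 \cdot \omega \in I_0 + J_l$, an interval of length $2\abs{I_0}$ covered by at most two translates of $I_0$; hence only $\tilde u_1^{(l)} \equiv \Proj_{\xi_1 \cdot \omega \in I_0 + J_l} u_1$ interacts with $u_2^{(l)}$, and $\norm{\tilde u_1^{(l)}} \lesssim M$. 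Setting
$$
  v_l \equiv \Proj_{\xi_0 \cdot \omega \in I_0} \mathfrak B_{\theta_{12} \ll 1}\bigl( \Proj_{\R \times T_r(\omega)} \tilde u_1^{(l)},\, u_2^{(l)} \bigr),
$$
we obtain $v = \sum_l v_l$, and Theorem \ref{G:Thm} yields
$$
  \norm{v_l} \lesssim (r^2 L_1 L_2)^{1/2} \, \norm{\tilde u_1^{(l)}} \, \norm{u_2^{(l)}} \lesssim (r^2 L_1 L_2)^{1/2} M \norm{u_2^{(l)}}.
$$

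\emph{Almost-orthogonal summation, and the main obstacle.} The theorem reduces to the almost-orthogonality $\bignorm{\sum_l v_l}^2 \lesssim \sum_l \norm{v_l}^2$, from which
$\norm{v}^2 \lesssim r^2 L_1 L_2 M^2 \sum_l \norm{u_2^{(l)}}^2 = r^2 L_1 L_2 M^2 \norm{u_2}^2$ follows at once. This almost-orthogonality is the technical heart of the proof. The hypotheses $\theta_{12} \ll 1$, $\xi_1 \in T_r(\omega)$, $r \ll \Nmin^{12}$ and $N_1, N_2 \gg 1$ together pin $\pm_j \xi_j$ to a narrow cone around $\pm\omega$, so that $\abs{\xi_j} = \pm_j \xi_j \cdot \omega + O(r^2/N_j)$ modulo the hyperbolic error $O(L_j)$, concentrating the support of $v_l$ near the hyperplane $\tau_0 + \xi_0 \cdot \omega = 0$. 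Expanding the cross-term $\langle v_l, v_{l'}\rangle$ by Plancherel gives a $TT^*$-type expression in which the delta constraint $X_1 - X_2 = X_1' - X_2'$ forces $(\xi_1 - \xi_1')\cdot\omega = (\xi_2 - \xi_2')\cdot\omega$; combined with $\xi_2 \cdot \omega \in J_l$, $\xi_2' \cdot \omega \in J_{l'}$, this localises the discrepancy to scale $(l-l')\abs{I_0}$. The remaining work---which I expect to be the main obstacle---is to verify that this shift, together with the Case-by-case analysis of the signs $(\pm_1,\pm_2)$, forces the cross-term to vanish once $\abs{l-l'}$ exceeds a fixed constant, and then to dispose of the finitely many nearest-neighbour overlaps by Schur's test (or equivalently by Cauchy--Schwarz on the resulting finite-band matrix), which preserves the $\ell^2$ structure in $\{\norm{u_2^{(l)}}\}$.
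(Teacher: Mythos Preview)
First, note that the paper does not actually prove this theorem: it is quoted from \cite{Selberg:2008a} (see the opening of section \ref{J}), so there is no in-paper argument to compare against.

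On the merits of your proposal: the reduction to the pieces $v_l$ and the bound $\norm{v_l} \lesssim (r^2 L_1 L_2)^{1/2} M \norm{u_2^{(l)}}$ via Theorem \ref{G:Thm} are fine. The gap is exactly where you flag it---the almost-orthogonality $\bignorm{\sum_l v_l}^2 \lesssim \sum_l \norm{v_l}^2$---and it is a genuine gap, not merely a loose end. All of the $v_l$ share the \emph{same} Fourier support in $X_0$: the restriction $\xi_0 \cdot \omega \in I_0$ is uniform in $l$, and (as your own heuristic shows) every $v_l$ concentrates on the same thickened hyperplane $\tau_0 + \xi_0 \cdot \omega = O\bigl(\max(L_1,L_2, r^2/\Nmin^{12})\bigr)$, again independently of $l$. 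So there is no disjointness in $X_0$-space to exploit. Your $TT^*$ sketch observes that $(\xi_1 - \xi_1')\cdot\omega = (\xi_2 - \xi_2')\cdot\omega \approx (l-l')\abs{I_0}$, but this is \emph{automatically} satisfied by construction (the slabs are already $(l-l')\abs{I_0}$ apart) and imposes no additional constraint forcing the cross-term to be small. Concretely, for free waves with $\widetilde{u_1}, \widetilde{u_2}$ smooth bumps in long tubes along $\omega$, the $\xi_2$-integration surface for a fixed output $X_0$ (the level set $\tau_0 = \mp_1\abs{\xi_0+\xi_2} \pm_2 \abs{\xi_2}$) can cross arbitrarily many slabs $J_l$, so many $v_l$'s contribute at the same $X_0$ and there is no reason for $\langle v_l, v_{l'}\rangle$ to decay in $\abs{l-l'}$.

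The upshot is that black-boxing Theorem \ref{G:Thm} on the diagonal pieces and then summing cannot work. One has to open up the proof of Theorem \ref{G:Thm} itself (an angular decomposition in $\theta_{12}$ interacting with the tube geometry) and track how the output restriction $\xi_0\cdot\omega \in I_0$ becomes a slab restriction on $\xi_1 \cdot \omega$ \emph{inside} that argument, at a stage where the $\ell^2$-orthogonality of the $u_2^{(l)}$ is still directly usable.
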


Here the condition $N_1,N_2 \gg 1$ serves to ensure that the spatial Fourier supports, given by the conditions $\angles{\xi_j} \sim N_j$ for $j=1,2$, do not degenerate to balls. 

\subsection{Nonconcentration low output estimate}

The following result improves \eqref{M:17} in certain situations.

\begin{theorem}\label{J:Thm2} Assume $N_0 \ll N_1 \sim N_2$, and define $r = (N_0\Lmax^{012})^{1/2}$. Then with notation as in \eqref{B:200},
$$
  \Bignorm{\Proj_{K^{\pm_0}_{N_0,L_0}}\!\!(u_1u_2)}
  \lesssim \left( N_1^2 \Lmin^{012} \Lmed^{012} \right)^{1/2} \norm{u_1}
  \sup_{\omega \in \mathbb S^2} \norm{\Proj_{\R \times T_{r}(\omega)}u_2}.
$$
\end{theorem}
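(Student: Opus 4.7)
The idea is to exploit the geometric fact that when $|\xi_0|\sim N_0\ll N_1\sim |\xi_2|$, the bilinear interaction forces $\xi_1,\xi_2$ to be nearly parallel, so that the effective Fourier support of $u_2$ is concentrated in thin tubes of radius $\sim r$. First, dispose of the trivial subcase $r\gtrsim N_1$: for any $\omega$ the annulus $\{|\xi|\sim N_1\}$ is contained in $T_r(\omega)$ (up to the implicit constant), whence $\sup_\omega\|\Proj_{\R\times T_r(\omega)}u_2\|\sim\|u_2\|$, and the estimate follows from \eqref{M:17} of Theorem \ref{M:Thm} since $N_1^2\ge N_0N_1$. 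Henceforth assume $r\ll N_1$ and set $\gamma=r/N_1\ll 1$.

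On the Fourier support of the interaction, $\theta(\xi_1,\xi_2)\lesssim\gamma$. Indeed, when $\pm_1=\pm_2$, estimate \eqref{D:32} of Lemma \ref{D:Lemma1} yields $\theta_{12}^2\lesssim N_0\Lmax^{012}/N_1^2=(r/N_1)^2$. When $\pm_1\ne\pm_2$, \eqref{D:31} forces $\Lmax^{012}\gtrsim N_1$, whence $r\gtrsim(N_0 N_1)^{1/2}\gtrsim N_0$, and the elementary bound $\theta(\xi_1,\xi_2)\lesssim N_0/N_1\le\gamma$ (from $|\xi_0|\ll N_1$) already suffices. Hence the relevant $\xi_2$'s lie in angular sectors of aperture $\gamma$, equivalently in tubes of radius $\sim N_1\gamma=r$.

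Decompose $u_2=\sum_{\omega\in\Omega(\gamma)}u_2^\omega$ with $u_2^\omega=\Proj_{\xi_2\in\Gamma_\gamma(\omega)}u_2$. Then $u_2^\omega$ is Fourier-supported in $\R\times T_{Cr}(\omega)$, so $\|u_2^\omega\|\lesssim S$, where $S=\sup_\omega\|\Proj_{\R\times T_r(\omega)}u_2\|$. Applying \eqref{M:17} of Theorem \ref{M:Thm} piecewise gives
\[
  \Bignorm{\Proj_{K^{\pm_0}_{N_0,L_0}}(u_1 u_2^\omega)}\lesssim\bigl(N_0 N_1 \Lmin^{012}\Lmed^{012}\bigr)^{1/2}\|u_1\|\,\|u_2^\omega\|.
\]
One then sums in $\omega\in\Omega(\gamma)$ by combining the almost-orthogonality $\sum_\omega\|u_2^\omega\|^2\sim\|u_2\|^2$ from \eqref{B:204} with the uniform bound $\|u_2^\omega\|\lesssim S$, and controlling the overlap of the output Fourier supports in the annulus $\{|\xi_0|\sim N_0\}$ via Lemma \ref{B:Lemma3}.

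\textbf{Main obstacle.} The delicate point is the summation step, which must simultaneously exploit the $\ell^2$-orthogonality of the $u_2^\omega$ and the uniform bound $\|u_2^\omega\|\lesssim S$, while accounting for the nontrivial overlap of output tubes $T_{Cr}(\omega)$ at scale $|\xi_0|\sim N_0$ (the overlap count is $\sim(N_1/N_0)^2$, since at radius $N_0$ the tubes from distinct $\omega$'s only separate by $\sim N_0\gamma=N_0 r/N_1\ll r$). A triangle-inequality summation falls short, and a brute-force Cauchy--Schwarz with the overlap count loses too much. Balancing the two bounds carefully—paying a loss of $(N_1/N_0)^{1/2}$ relative to the sharper per-piece constant $(N_0 N_1\Lmin^{012}\Lmed^{012})^{1/2}$—yields exactly the $(N_1^2\Lmin^{012}\Lmed^{012})^{1/2}$ in the stated estimate, which is the price paid for replacing $\|u_2\|$ by $S$ on the right-hand side, and is precisely the improvement over \eqref{M:17} when $u_2$ fails to concentrate in any single tube.
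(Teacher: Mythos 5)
Your setup is sound: the reduction to $r\ll N_1$, the aperture $\gamma=r/N_1$, the bound $\theta(\xi_1,\xi_2)\lesssim\gamma$ from Lemma \ref{D:Lemma1}, the angular decomposition, and the per-piece application of Theorem \ref{M:Thm} are all correct. The gap is in the summation over $\omega$, which you explicitly leave unresolved: you write that a brute-force Cauchy--Schwarz ``loses too much'' and that ``balancing the two bounds carefully'' yields the result, but no such balancing is exhibited, and in fact none exists at the level of the spatial-overlap argument you set up. Concretely, the outputs $\Proj_{K^{\pm_0}_{N_0,L_0}}(u_1 u_2^\omega)$ are supported in $T_{r}(\omega)\cap\{\angles{\xi_0}\sim N_0\}$, and at radius $N_0$ the tubes for $\gamma$-separated $\omega$ overlap with multiplicity $M\sim(r/(N_0\gamma))^2=(N_1/N_0)^2$. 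Almost-orthogonality then gives
\begin{equation*}
  \Bignorm{\Proj_{K^{\pm_0}_{N_0,L_0}}(u_1u_2)}^2
  \lesssim M\,(N_0N_1\Lmin^{012}\Lmed^{012})\,\|u_1\|^2 S^2
  = \frac{N_1^3}{N_0}\,\Lmin^{012}\Lmed^{012}\,\|u_1\|^2 S^2,
\end{equation*}
which is worse than the target $N_1^2\Lmin^{012}\Lmed^{012}\|u_1\|^2 S^2$ by a factor $N_1/N_0$. (One can close the cases $L_1=\Lmax^{012}$ or $L_2=\Lmax^{012}$ by replacing \eqref{M:17} with the sharper per-piece estimate \eqref{M:14}, which contributes $(N_0^2\Lmin^{012}\Lmed^{012})^{1/2}$; but in the generic case $L_0=\Lmax^{012}$ the loss is irreducible at this level of argument, and there is no choice of per-piece bound that ``balances'' against the overlap $(N_1/N_0)^2$.)

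The idea you are missing is a space-time orthogonality, not the purely spatial tube-overlap you invoke. Each localized piece lies not only in a spatial tube but, by \eqref{B:112}, in a thickened null hyperplane $H_d(\omega)$, and the multiplicity of these hyperplanes across the angular net is controlled by Lemma \ref{J:Lemma} — which the paper explicitly names as ``a key ingredient'' of the proof of this theorem. Exploiting it requires a second, finer angular decomposition and a Cauchy--Schwarz step pairing that finer decomposition of one factor against the $H_d$-restriction of the other (the pattern used in section \ref{G:20:7} around \eqref{G:20:18}--\eqref{G:20:22}); the full proof is in \cite{Selberg:2008a}. Without this device, the factor $(N_1/N_0)^{1/2}$ cannot be recovered, and the proof as written does not close.
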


In fact, we do not use Theorem \ref{J:Thm2} as stated, but we use the main ideas from its proof, a key ingredient of which is the following partial orthogonality estimate for a family of thickened null hyperplanes corresponding to a set of well-separated directions on the unit sphere.

\begin{lemma}\label{J:Lemma} Suppose $N,d > 0$, $\omega_0 \in \mathbb S^2$ and $0 < \gamma < \gamma' < 1$. The estimate
\begin{equation}\label{J:40}
  \sum_{\genfrac{}{}{0pt}{1}{\omega \in \Omega(\gamma)}{\theta(\omega,\omega_0) \le \gamma'}} \chi_{H_d(\omega)}(\tau,\xi)
  \lesssim \frac{\gamma'}{\gamma} + \frac{d}{N\gamma^2}
\end{equation}
holds for all $(\tau,\xi) \in \R^{1+3}$ with $\abs{\xi} \sim N$.
\end{lemma}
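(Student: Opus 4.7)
Fix $(\tau,\xi)$ with $\abs{\xi}\sim N$, set $e=\xi/\abs{\xi}$ and $\alpha_0=\theta(\omega_0,e)$, and parametrize $\mathbb S^2$ by spherical coordinates $(\alpha,\beta)$ with pole at $e$. The condition $(\tau,\xi)\in H_d(\omega)$ reads $\abs{\tau+\abs{\xi}\cos\alpha(\omega)}\lesssim d$, i.e.\ $\cos\alpha(\omega)\in I$ for an interval $I\subset\R$ with $\abs{I}\lesssim d/N$. So the set $E = \{\omega : (\tau,\xi)\in H_d(\omega)\}$ is a latitude band relative to $e$; the main tool is that the substitution $u=\cos\alpha$ turns $\sin\alpha\,d\alpha\,d\beta$ into $du\,d\beta$.

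Writing $C = C(\omega_0,\gamma')$ for the cap of radius $\gamma'$ around $\omega_0$, the $\gamma$-separation of $\Omega(\gamma)$ and the standard packing bound on $\mathbb S^2$ show that the left-hand side of \eqref{J:40} is bounded by $\gamma^{-2}\bigabs{(E\cap C)^{+\gamma/2}}$, where $(\cdot)^{+\gamma/2}$ denotes spherical $\gamma/2$-thickening. If $\omega'\in(E\cap C)^{+\gamma/2}$ with witness $\omega\in E\cap C$ at distance $\le\gamma/2$, then $\theta(\omega',\omega_0)\lesssim\gamma'$, and the mean value theorem combined with $\abs{\alpha(\omega)-\alpha(\omega')}\le\theta(\omega,\omega')$ yields
$$
  \bigabs{\cos\alpha(\omega)-\cos\alpha(\omega')}
  \;\lesssim\; \max\!\bigl(\sin\alpha(\omega),\sin\alpha(\omega')\bigr)\cdot\gamma.
$$
I then split into two regimes.

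Case 1 ($\sin\alpha_0\gtrsim\gamma'$): The thickened cap stays away from $\pm e$, so $\sin\alpha\sim\sin\alpha_0$ on it. In $(\alpha,\beta)$ coordinates the thickened cap is (essentially) the ellipse $\abs{\alpha-\alpha_0}\lesssim\gamma'$, $\abs{\beta-\beta_0}\lesssim\gamma'/\sin\alpha_0$, and the thickened band cuts it as a strip of $\alpha$-thickness $\lesssim d/(N\sin\alpha_0)+\gamma$. Integrating $\sin\alpha\sim\sin\alpha_0$ gives area $\lesssim\gamma'\bigl(d/(N\sin\alpha_0)+\gamma\bigr)$; dividing by $\gamma^2$ and using $\sin\alpha_0\gtrsim\gamma'$ yields precisely $\gamma'/\gamma+d/(N\gamma^2)$.

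Case 2 ($\sin\alpha_0\ll\gamma'$): Here $\omega_0$ lies within $O(\gamma')$ of one of $\pm e$, so every $\omega$ in the thickened cap has $\sin\alpha(\omega)\lesssim\gamma'$ (the case near $-e$ is symmetric). Plugging $\sin\alpha\lesssim\gamma'$ into the mean value bound improves the $\cos\alpha$-thickening from the trivial $\gamma$ down to $\lesssim\gamma\gamma'$, so the thickened set lies in $\{\cos\alpha\in I^{+C'\gamma\gamma'}\}$ for some absolute constant $C'$; the substitution $u=\cos\alpha$ then produces area $\lesssim d/N+\gamma\gamma'$, and dividing by $\gamma^2$ again gives $d/(N\gamma^2)+\gamma'/\gamma$.

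The delicate step is Case 2: pole proximity must be exploited to replace the naive $\gamma$-thickening of $I$ by the sharper $\gamma\gamma'$-thickening; without this one would end up with a stray $1/\gamma$ in place of the desired $\gamma'/\gamma$, which is strictly weaker since $\gamma'<1$.
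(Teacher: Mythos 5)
Your argument is correct. The paper itself gives no proof of this lemma (it is deferred to the reference cited for Theorem \ref{J:Thm2}), so there is nothing in the present text to compare against; but every step of your proposal holds up under scrutiny. The packing reduction $\text{LHS}\lesssim\gamma^{-2}\abs{(E\cap C)^{+\gamma/2}}$ is standard, the parametrization with pole at $e=\xi/\abs{\xi}$ correctly identifies $E$ as a latitude band with $\abs{I}\lesssim d/N$, and the mean-value bound $\abs{\cos\alpha(\omega)-\cos\alpha(\omega')}\lesssim\max(\sin\alpha(\omega),\sin\alpha(\omega'))\,\theta(\omega,\omega')$ is valid since the two angles differ by at most $\gamma/2<1/2$, so $\sin$ at the intermediate point is comparable to the maximum of the endpoint values. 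In Case~1 the comparability $\sin\alpha\sim\sin\alpha_0$ on the thickened cap (valid once $\sin\alpha_0\ge K\gamma'$ for $K$ large enough) lets you localize both $\alpha$ and $\beta$ and the surface measure integrates to $\lesssim\gamma'\bigl(d/(N\sin\alpha_0)+\gamma\bigr)$, which together with $\gamma'/\sin\alpha_0\lesssim 1$ gives the claim. In Case~2 the proximity to $\pm e$ forces $\sin\alpha\lesssim\gamma'$ throughout the thickened cap, upgrading the naive $\gamma$-thickening of $I$ to a $\gamma\gamma'$-thickening, and the substitution $u=\cos\alpha$ then yields area $\lesssim d/N+\gamma\gamma'$; this case also absorbs the regime $\gamma'\sim 1$, where the bound $\sin\alpha\le 1\lesssim\gamma'$ already suffices. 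You correctly identified that the cap restriction (used via the $\beta$-localization in Case~1 and via the pole proximity in Case~2) is precisely what produces $\gamma'/\gamma$ rather than a useless $1/\gamma$. The proof is complete and self-contained.
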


\subsection{Null form estimate with ball restriction} In the following analogue of Theorem \ref{G:Thm}, the tube is replaced by a ball. Then the symbol $\theta_{12}$ in \eqref{D:1:1} and \eqref{D:1:2} can be replaced by $\sqrt{\theta_{12}}$, defining $\mathfrak B_{\sqrt{\theta_{12}}}$ and $\mathfrak B'_{\sqrt{\theta_{12}}}$.

\begin{theorem}\label{G:Thm2} Let $r > 0$, and let $B \subset \R^3$ be a ball of radius $r$ and arbitrary center. Then with notation as in \eqref{B:200},
$$
  \bignorm{\Proj_{\R \times B} \mathfrak B_{\sqrt{\theta_{12}}}(u_1,u_2)}
  \lesssim \left( r^2 L_1 L_2 \right)^{1/2}
  \norm{u_1}\norm{u_2}.
$$
The same holds if $\Proj_{\R \times B}$ is placed in front of either $u_1$ or $u_2$, instead of outside the product. Moreover, the same estimates hold for $\mathfrak B'_{\sqrt{\theta_{12}}}$
\end{theorem}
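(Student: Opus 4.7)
\emph{Proof proposal.}

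My plan is to reduce Theorem \ref{G:Thm2} to the tube-restricted null-form estimate of Theorem \ref{G:Thm} by (i) geometrically containing the ball in a tube through the origin, and (ii) performing a dyadic angular decomposition to isolate pieces where $\sqrt{\theta_{12}}$ is essentially the constant $\gamma^{1/2}$.

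For (i), let $B = B(c, r)$ and set $\omega = c/\abs{c}$ if $c \ne 0$, or pick any $\omega \in \mathbb S^2$ if $c = 0$. Then $P_{\omega^\perp} c = 0$, so any $\xi = c + \eta \in B$ (with $\abs{\eta} \le r$) satisfies $\abs{P_{\omega^\perp} \xi} = \abs{P_{\omega^\perp} \eta} \le r$; hence $B \subset T_r(\omega)$ and it suffices to prove the analog of the estimate with $B$ replaced by $T_r(\omega)$. For (ii), write $\sqrt{\theta_{12}} \sim \sum_{\gamma\,\text{dyadic},\,0<\gamma\le1} \gamma^{1/2} \chi_{\theta_{12} \sim \gamma}$ and apply Lemma \ref{B:Lemma4} to each $\gamma$-piece to split into sector pairs $(\omega_1, \omega_2) \in \Omega(\gamma) \times \Omega(\gamma)$ with $\theta(\omega_1, \omega_2) \sim \gamma$. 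The piece $u_j^{\gamma, \omega_j}$ is Fourier-supported in the spatial tube $T_{N_j\gamma}(\omega_j)$, so Theorem \ref{G:Thm} combined with $\sqrt{\theta_{12}} \sim \gamma^{1/2}$ yields the baseline
\[
\bignorm{\mathfrak B_{\sqrt{\theta_{12}}}(u_1^{\gamma, \omega_1}, u_2^{\gamma, \omega_2})} \lesssim (\min(N_1,N_2)^2 \gamma L_1 L_2)^{1/2} \norm{u_1^{\gamma, \omega_1}}\norm{u_2^{\gamma, \omega_2}}.
\]

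The heart of the proof is then to exploit the output tube $T_r(\omega)$ to extract the sharp $r^2$ factor. The output $\xi_0 = \xi_1 - \xi_2$ of the $(\gamma,\omega_1,\omega_2)$-piece is concentrated in a pillbox of transverse extent $\sim \min(N_1,N_2)\gamma$ aligned with $\omega_1 \approx \omega_2$; intersecting with $T_r(\omega)$ localizes the output to a region of effective cross-section $\min(r, \min(N_1,N_2)\gamma)^2$. A partial-orthogonality analysis --- combining Lemma \ref{B:Lemma3} to count the sector pairs whose pillboxes intersect $T_r(\omega)$, \eqref{B:208} to sum the norms, and a Lemma \ref{J:Lemma}--type overlap bound in the plane transverse to $\omega$ when $r \ll \min(N_1,N_2)\gamma$ --- replaces the $\min(N_1, N_2)^2$ baseline factor by $r^2$ uniformly in $\gamma$. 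The resulting dyadic sum in $\gamma$ is geometric and bounded, yielding the claimed estimate. The variants placing $\Proj_{\R\times B}$ in front of $u_1$ or $u_2$ follow by duality, and the case of $\mathfrak B'_{\sqrt{\theta_{12}}}$ is handled identically using the $\mathfrak B'$-version of Theorem \ref{G:Thm}.

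\textbf{Main obstacle.} The delicate point is the two-regime split in the partial orthogonality step. When $r \gtrsim \min(N_1,N_2)\gamma$ the output tube is wider than each angular pillbox, and one needs a sharp count (via Lemma \ref{B:Lemma3}) of the sector directions $\omega_1$ whose pillbox intersects $T_r(\omega)$. When $r \ll \min(N_1,N_2)\gamma$ the tube slices each pillbox into thin slabs, and a Lemma \ref{J:Lemma}--style transverse orthogonality is required to avoid any logarithmic loss in the dyadic $\gamma$-sum and thereby recover exactly the factor $(r^2 L_1 L_2)^{1/2}$.
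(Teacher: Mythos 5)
The paper does not actually prove Theorem~\ref{G:Thm2}; it is recalled from \cite{Selberg:2008a}, and the remarks around \eqref{G:40:36} and the proof of \eqref{M:18} indicate that the argument there goes via a direct Cauchy--Schwarz together with a volume estimate (after tiling $\xi_1,\xi_2$ into $r$-balls, which the ball restriction on $\xi_0$ pairs up $O(1)$-to-one). Your proposal takes a genuinely different route — angular decomposition and reduction to the tube-restricted Theorem~\ref{G:Thm} — but it has a real gap: the "partial-orthogonality analysis," which you correctly identify as the heart of the matter, is asserted rather than carried out, and as described it does not obviously work.

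Two concrete problems. First, replacing $B$ by the tube $T_r(\omega)$ in step (i) discards exactly the compactness of the ball that upgrades the admissible weight from $\theta_{12}$ in Theorem~\ref{G:Thm} to $\sqrt{\theta_{12}}$ here; once you have only a tube restriction on the output, the target estimate with $\sqrt{\theta_{12}}$ is plausibly false, and everything now hinges on recovering the lost localization from the dyadic angular pieces. Second, the claimed "effective cross-section $\min(r,\Nmin^{12}\gamma)^2$" is not justified: the output $\xi_0 = \xi_1-\xi_2$ of a $(\gamma,\omega_1,\omega_2)$-piece is not confined to a compact pillbox. When $N_1\sim N_2$ it sweeps out essentially the full tube $T_{\Nmax^{12}\gamma}(\omega_1)$ (the radial component $\xi_0\cdot\omega_1 \approx |\xi_1|-|\xi_2|$ is unconstrained in $[-C\Nmax^{12},C\Nmax^{12}]$), and when $\omega_1\approx\omega$ this intersects $T_r(\omega)$ in an unbounded set, so the per-sector localization you need is not there. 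Lemma~\ref{J:Lemma} bounds the overlap of thickened null hyperplanes in the $(\tau,\xi)$-variables at fixed $|\xi|$, which is not the same as the spatial transverse orthogonality you invoke. Finally, even granting a $\min(r,\Nmin^{12}\gamma)^2$ bound per scale, there are $\sim\log(\Nmin^{12}/r)$ dyadic $\gamma$ where it saturates at $r^2$, so you also need almost-orthogonality \emph{across} the $\gamma$-scales — a step you flag as the obstacle but never supply. Until it is actually proved, this is an outline, not a proof; I would instead pursue the direct Cauchy--Schwarz/volume route that the paper itself points to.
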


\section{Some properties of bilinear interactions}\label{F}

Here we fill in some details about the bilinear interaction $X_0=X_1-X_2$, where $X_j=(\tau_j,\xi_j)$. Given signs $(\pm_0,\pm_1,\pm_2)$, we define the hyperbolic weights $\hypwt_j$ as in \eqref{D:3}, and we assume $\xi_j \neq 0$ for $j=0,1,2$, so that the unit vectors $e_j$ and the angles $\theta_{jk}$ are well-defined, as in \eqref{C:32} and \eqref{C:40}.

In Lemma \ref{D:Lemma1} we related the angle $\theta_{12}$ to the size of the hyperbolic weights $\hypwt_j$ and the elliptic weights $\abs{\xi_j}$. The sign $\pm_0$ was arbitrary, but by keeping track of the sign $\pm_0$ we can get additional information. In fact, since $\tau_0=\tau_1-\tau_2$,
\begin{equation}\label{F:0}
  \hypwt_0 - \hypwt_1 + \hypwt_2
  = \pm_0\abs{\xi_0}-\pm_1\abs{\xi_1} \pm_2\abs{\xi_2},
\end{equation}
so by \eqref{B:4}, \eqref{B:6} and the fact that $\xi_0=\xi_1-\xi_2$, we get the information in Table \ref{F:Table}.

\begin{table}
\def\arraystretch{1.7}
\begin{center}
\begin{tabular}{|c|c|}
\hline
  $(\pm_0,\pm_1,\pm_2) =$ & $\abs{\hypwt_0} + \abs{\hypwt_1} + \abs{\hypwt_2} \ge$
  \\
  \hline\hline
  $(+,+,+)$ or $(-,-,-)$& $\abs{\xi_0} + \abs{\xi_2} - \abs{\xi_1}
  \sim \min(\abs{\xi_0},\abs{\xi_2}) \theta_{02}^2$
  \\
  \hline
  $(-,+,+)$ or $(+,-,-)$& $\abs{\xi_0} + \abs{\xi_1} - \abs{\xi_2}
  \sim \min(\abs{\xi_0},\abs{\xi_1}) \theta_{01}^2$
  \\
  \hline
  $(+,+,-)$ or $(-,-,+)$& $\abs{\xi_0} + \abs{\xi_1} + \abs{\xi_2}$
  \\
  \hline
  $(-,+,-)$ or $(+,-,+)$& $\abs{\xi_1} + \abs{\xi_2} - \abs{\xi_0}
  \sim \min(\abs{\xi_1},\abs{\xi_2}) \theta_{12}^2$
  \\
  \hline
\end{tabular}
\end{center}
\caption{Estimates for the bilinear interaction.}
\label{F:Table}
\end{table}

From this table we see that for certain bilinear interactions,
\begin{equation}\label{F:2}
  \max\left(\abs{\hypwt_0},\abs{\hypwt_1},\abs{\hypwt_2}\right)
  \gtrsim \abs{\xi_0},
\end{equation}
which is good because it excludes the null interaction. In general, \eqref{F:2} holds if $\pm_0$ is \emph{different from} the following sign:

\begin{definition}\label{F:Def}
We define the sign, depending on $(\pm_1,\pm_2)$ and $(\abs{\xi_1},\abs{\xi_2})$,
$$
  \pm_{12}
  =
  \begin{cases}
  + \qquad &\text{if $(\pm_1,\pm_2) = (+,+)$ and $\abs{\xi_1} > \abs{\xi_2}$},
  \\
  - \qquad &\text{if $(\pm_1,\pm_2) = (+,+)$ and $\abs{\xi_1} \le \abs{\xi_2}$},
  \\
  + \qquad &\text{if $(\pm_1,\pm_2) = (+,-)$}.
  \end{cases}
$$
The definitions in the remaining cases $(\pm_1,\pm_2) = (-,-), (-,+)$ are then obtained by reversing all three signs $\pm_{12},\pm_1,\pm_2$ above.
\end{definition}

As remarked already we then have, by the above table:

\begin{lemma}\label{F:Lemma2}
If $\pm_0 \neq \pm_{12}$, then \eqref{F:2} holds.
\end{lemma}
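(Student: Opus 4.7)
The plan is to deduce the lemma directly from identity \eqref{F:0}. Since $\tau_0-\tau_1+\tau_2=0$, that identity combined with the triangle inequality gives
\[
\abs{\hypwt_0}+\abs{\hypwt_1}+\abs{\hypwt_2} \ge \bigabs{\hypwt_0-\hypwt_1+\hypwt_2} = \bigabs{{\pm_0}\abs{\xi_0} - {\pm_1}\abs{\xi_1} + {\pm_2}\abs{\xi_2}},
\]
so it will be enough to show that the right-hand side is $\gtrsim\abs{\xi_0}$ whenever $\pm_0\ne\pm_{12}$. The only geometric facts needed beyond this are the triangle inequalities $\abs{\xi_j}\le\abs{\xi_k}+\abs{\xi_l}$ for every permutation of $\{0,1,2\}$, all immediate from $\xi_0=\xi_1-\xi_2$.

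The case analysis splits on $(\pm_1,\pm_2)$. First I would dispatch the case $\pm_1\ne\pm_2$: by Definition \ref{F:Def} one has $\pm_{12}=\pm_1$, so the hypothesis forces $\pm_0=-\pm_1=\pm_2$, which makes the three terms in $\pm_0\abs{\xi_0}-\pm_1\abs{\xi_1}+\pm_2\abs{\xi_2}$ carry a common sign. Its absolute value is therefore $\abs{\xi_0}+\abs{\xi_1}+\abs{\xi_2}\ge\abs{\xi_0}$. For the case $\pm_1=\pm_2$, I would exploit the invariance of both \eqref{F:0} and Definition \ref{F:Def} under the global sign reversal $(\pm_0,\pm_1,\pm_2)\mapsto(-\pm_0,-\pm_1,-\pm_2)$ to reduce to $\pm_1=\pm_2=+$. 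Then two sub-cases remain: if $\abs{\xi_1}>\abs{\xi_2}$, Definition \ref{F:Def} gives $\pm_{12}=+$, so $\pm_0=-$, and the identity's right-hand side has absolute value $\abs{\xi_0}+\abs{\xi_1}-\abs{\xi_2}$ (using $\abs{\xi_2}\le\abs{\xi_0}+\abs{\xi_1}$), which is $\ge\abs{\xi_0}$ since $\abs{\xi_1}\ge\abs{\xi_2}$; if instead $\abs{\xi_1}\le\abs{\xi_2}$, then $\pm_{12}=-$, $\pm_0=+$, and a symmetric calculation gives $\abs{\xi_0}+\abs{\xi_2}-\abs{\xi_1}\ge\abs{\xi_0}$.

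Combining the cases yields $\abs{\hypwt_0}+\abs{\hypwt_1}+\abs{\hypwt_2}\gtrsim\abs{\xi_0}$ whenever $\pm_0\ne\pm_{12}$, and \eqref{F:2} follows. There is no genuine obstacle here; the proof is purely a matter of sign bookkeeping. The conceptual point is that Definition \ref{F:Def} has been calibrated so that the excluded choice $\pm_0=\pm_{12}$ is exactly the one for which a fully null interaction $\hypwt_0=\hypwt_1=\hypwt_2=0$ is compatible with the magnitude constraint from \eqref{F:0}; once that pathological sign is removed, the triangle-inequality bookkeeping automatically produces the claimed lower bound.
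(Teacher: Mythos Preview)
Your proof is correct and is essentially the same approach as the paper's: the paper simply says the lemma follows from Table~\ref{F:Table}, which records exactly the case-by-case evaluation of the right-hand side of \eqref{F:0} that you carry out explicitly. Your write-up is in fact more self-contained, since you supply the sign bookkeeping and triangle-inequality checks that the table leaves implicit.
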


Now consider the case of equal signs. The possible interactions are illustrated in Figures \ref{F:Fig1} and \ref{F:Fig2}. From the sine rule we obtain:
\begin{lemma}\label{F:Lemma1}
If $\pm_0 = \pm_{12}$, then
\begin{equation}\label{F:4}
  \min\left(\theta_{01},\theta_{02}\right)
  \sim
  \frac{\min\left(\abs{\xi_1},\abs{\xi_2}\right)}{\abs{\xi_0}} \sin \theta_{12}.
\end{equation}
Moreover, if $\pm_0 = \pm_{12}$ and $\pm_1 \neq \pm_2$, then
\begin{equation}\label{F:5}
  \max\left(\theta_{01},\theta_{02}\right)
  \sim
  \theta_{12}.
\end{equation}
\end{lemma}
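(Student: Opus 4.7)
The plan is to interpret the constraint $\xi_0 = \xi_1 - \xi_2$ as a genuine Euclidean triangle in $\R^3$ with side lengths $\abs{\xi_0}, \abs{\xi_1}, \abs{\xi_2}$, and then read off both \eqref{F:4} and \eqref{F:5} from the sine rule and the triangle angle sum. To normalize signs I introduce $v_j = \pm_j \xi_j$, so that $e_j = v_j/\abs{v_j}$ and $\theta_{jk} = \vangle(v_j, v_k)$. Since flipping all three signs simultaneously preserves every $\theta_{jk}$, and since the statement is symmetric under $1 \leftrightarrow 2$, I can reduce to the two representative configurations with $\pm_0 = \pm_{12}$: (A) $(\pm_0,\pm_1,\pm_2)=(+,+,+)$ with $\abs{\xi_1}>\abs{\xi_2}$; and (C) $(\pm_0,\pm_1,\pm_2)=(+,+,-)$.

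In case (A) I take the triangle $0, \xi_1, \xi_2$. Inspecting the inward-pointing side vectors at each vertex identifies the vertex angles as $\theta_{12}$ at $0$, $\theta_{01}$ at $\xi_1$, and $\pi - \theta_{02}$ at $\xi_2$ (the $\pi-$ arises because the edge leaving $\xi_2$ toward $\xi_1$ carries direction $+e_0$ while the other edge carries $-e_2$). The angle sum forces $\theta_{02} = \theta_{01} + \theta_{12}$, so $\min(\theta_{01},\theta_{02}) = \theta_{01}$, and the sine rule $\abs{\xi_2}/\sin\theta_{01} = \abs{\xi_0}/\sin\theta_{12}$ is then precisely \eqref{F:4} provided $\sin\theta_{01} \sim \theta_{01}$, i.e.\ $\theta_{01} \le \pi/2$. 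This last step follows from $\xi_1 \cdot \xi_0 = \abs{\xi_1}^2 - \xi_1 \cdot \xi_2 > 0$, which uses exactly the hypothesis $\abs{\xi_1} > \abs{\xi_2}$ encoded into Definition \ref{F:Def}.

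Case (C) is parallel but now $v_0 = v_1 + v_2$, so I use the triangle $0, v_1, v_0$ instead. The three vertex angles come out to $\theta_{01}$ at $0$, $\pi - \theta_{12}$ at $v_1$, and $\theta_{02}$ at $v_0$, so the angle sum yields the cleaner identity $\theta_{01} + \theta_{02} = \theta_{12}$. Statement \eqref{F:5} is immediate since $\max(\theta_{01},\theta_{02}) \in [\theta_{12}/2, \theta_{12}]$. For \eqref{F:4}, assuming without loss of generality that $\abs{\xi_1} \ge \abs{\xi_2}$, the sine rule in this triangle forces $\theta_{02} \ge \theta_{01}$, so $\min(\theta_{01},\theta_{02}) = \theta_{01} \le \theta_{12}/2 \le \pi/2$ and $\sin\theta_{01} \sim \theta_{01}$; applying $\abs{\xi_2}/\sin\theta_{01} = \abs{\xi_0}/\sin\theta_{12}$ closes the case.

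The only place that needs genuine care, rather than bookkeeping, is verifying $\min(\theta_{01},\theta_{02}) \le \pi/2$ so that $\sin$ can be replaced by the angle itself. This is exactly where the definition of $\pm_{12}$ pays off: it orients the triangle so that the minimum angle is opposite the smaller of $\abs{\xi_1},\abs{\xi_2}$ and is therefore small. In the complementary case $\pm_0 \neq \pm_{12}$ the triangle would be geometrically inadmissible, which is precisely why Lemma \ref{F:Lemma2} instead extracts a lower bound of size $\abs{\xi_0}$ on $\max_j \abs{\hypwt_j}$. Beyond this subtlety, the argument reduces to two applications of the sine rule.
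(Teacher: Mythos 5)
Your proof is correct and follows essentially the same route as the paper's: you reduce to the representative sign configurations, use the Euclidean triangle formed by $\xi_0=\xi_1-\xi_2$ (or $v_0=v_1+v_2$), read off the angle-sum identities $\theta_{02}=\theta_{01}+\theta_{12}$ (resp.\ $\theta_{12}=\theta_{01}+\theta_{02}$), and apply the sine rule together with the observation that the minimum angle is at most $\pi/2$. The only difference is cosmetic: the paper gets the angle relations and the $\pi/2$ bound by inspecting Figures \ref{F:Fig1}--\ref{F:Fig2}, while you derive them analytically (angle sum plus the dot-product sign $\xi_1\cdot\xi_0>0$), which is arguably a bit more self-contained.
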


\begin{proof} It suffices to consider the cases $(\pm_1,\pm_2) = (+,+), (+,-)$. In both cases, the law of sines yields (see Figures \ref{F:Fig1} and \ref{F:Fig2})
\begin{align}
  \label{F:6}
  \abs{\xi_2}\sin\theta_{12} &= \abs{\xi_0}\sin\theta_{01},
  \\
  \label{F:8}
  \abs{\xi_1}\sin\theta_{12} &= \abs{\xi_0}\sin\theta_{02}.
\end{align}
Now we claim that
\begin{equation}\label{F:10}
  \min\left(\theta_{01},\theta_{02}\right)
  = \begin{cases}
  \theta_{01} \quad &\text{if $\abs{\xi_1} > \abs{\xi_2}$},
  \\
  \theta_{02} \quad &\text{if $\abs{\xi_1} \le \abs{\xi_2}$},
  \end{cases}
\end{equation}
and
\begin{equation}\label{F:12}
  0 \le \min\left(\theta_{01},\theta_{02}\right) \le \frac{\pi}{2}.
\end{equation}
Clearly, \eqref{F:6}--\eqref{F:12} imply \eqref{F:4}, so it remains to prove the claim.

First, assume that $(\pm_1,\pm_2) = (+,+)$. If $\abs{\xi_1} > \abs{\xi_2}$, then from Figure \ref{F:Fig1}(a) we see that $\theta_{02} = \theta_{01} + \theta_{12}$ and $\theta_{01} \in [0,\pi/2]$, whereas if $\abs{\xi_1} \le \abs{\xi_2}$, then as in Figure \ref{F:Fig1}(b) we have $\theta_{01} = \theta_{02} + \theta_{12}$ and $\theta_{02} \in [0,\pi/2]$. Thus, \eqref{F:10} and \eqref{F:12} hold.

Second, consider $(\pm_1,\pm_2) = (+,-)$. From Figure \ref{F:Fig2} we see that $\theta_{12} = \theta_{01} + \theta_{02}$, so the minimum of $\theta_{01}$ and $\theta_{02}$ must belong to $[0,\pi/2]$, proving \eqref{F:12}.  If $\abs{\xi_1} > \abs{\xi_2}$, the minimum is $\theta_{01}$, whereas if $\abs{\xi_1} \le \abs{\xi_2}$, the minimum is $\theta_{02}$, so \eqref{F:10} holds. Since $\theta_{12} = \theta_{01} + \theta_{02}$, \eqref{F:5} is immediate.
\end{proof}

\begin{figure}
\begin{center}
  \includegraphics{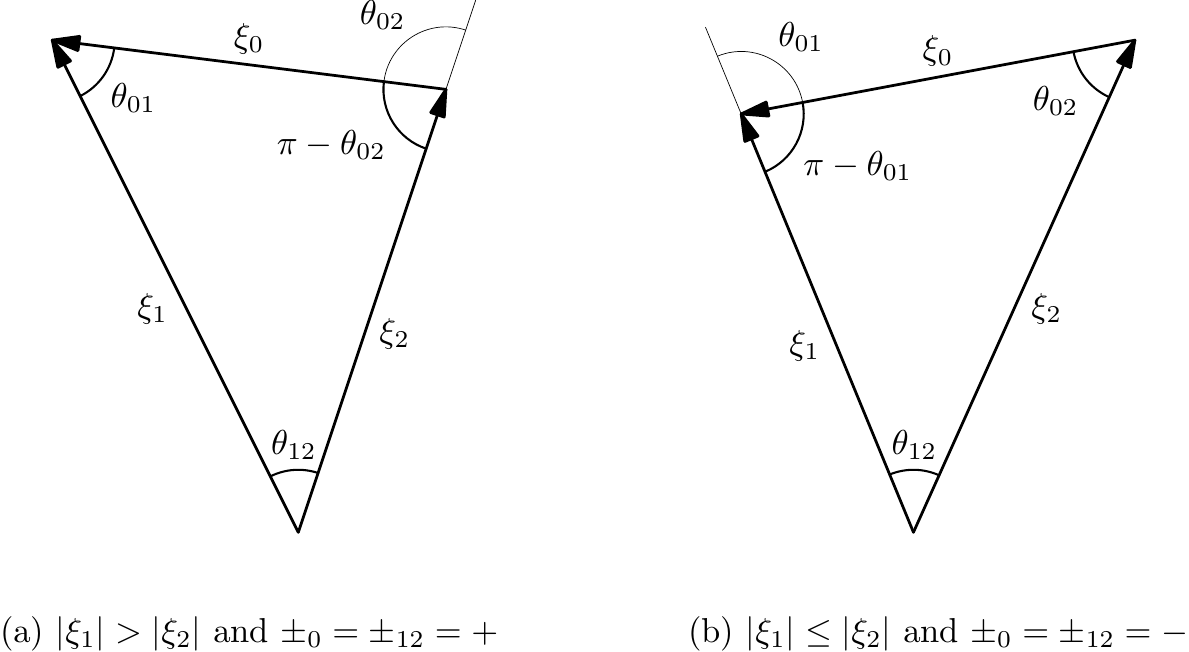}
\caption{$(\pm_1,\pm_2) = (+,+)$ and $\pm_0=\pm_{12}$.}
\label{F:Fig1}
\end{center}
\end{figure}

\begin{figure}
\begin{center}
  \includegraphics{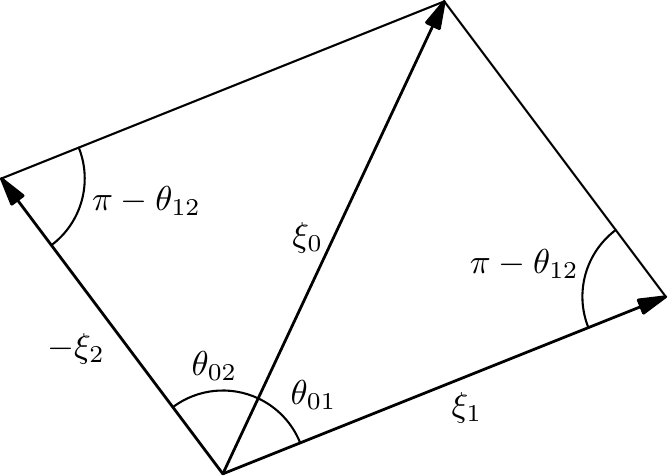}
\caption{$(\pm_1,\pm_2) = (+,-)$ and $\pm_0=\pm_{12}=+$. Here $\theta_{12} = \theta_{01} + \theta_{02}$.}
\label{F:Fig2}
\end{center}
\end{figure}

The following is reminiscent of \eqref{D:30} in Lemma \ref{D:Lemma1}.

\begin{lemma}\label{F:Lemma4}
For all signs,
\begin{equation}\label{F:14}
  \max\left( \abs{\hypwt_0}, \abs{\hypwt_1}, \abs{\hypwt_2} \right)
  \gtrsim \abs{\xi_0} \min\left(\theta_{01},\theta_{02}\right)^2. 
\end{equation}
\end{lemma}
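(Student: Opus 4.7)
The plan is to prove this by a case split using Definition \ref{F:Def}, combined with Lemmas \ref{F:Lemma2}, \ref{F:Lemma1} and \ref{D:Lemma1}. All the geometric input is already available in those three lemmas, so only bookkeeping is required.

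First, if $\pm_0 \neq \pm_{12}$, I would invoke Lemma \ref{F:Lemma2} directly: it gives $\max(|\hypwt_0|,|\hypwt_1|,|\hypwt_2|) \gtrsim |\xi_0|$, and since angles take values in $[0,\pi]$, trivially $\min(\theta_{01},\theta_{02})^2 \lesssim 1$, so \eqref{F:14} is immediate.

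Next, suppose $\pm_0 = \pm_{12}$, so that Lemma \ref{F:Lemma1} applies. The key step is to use \eqref{F:4} to rewrite
$$
  |\xi_0|\min(\theta_{01},\theta_{02})^2
  \sim \frac{\min(|\xi_1|,|\xi_2|)^2 \sin^2\theta_{12}}{|\xi_0|}
  \le \frac{|\xi_1||\xi_2|\theta_{12}^2}{|\xi_0|},
$$
using $\sin\theta \le \theta$ and $\min(|\xi_1|,|\xi_2|)^2 \le |\xi_1||\xi_2|$. Then I would apply Lemma \ref{D:Lemma1}. If \eqref{D:34} fails, \eqref{D:32} directly dominates the right-hand side by $\max(|\hypwt_0|,|\hypwt_1|,|\hypwt_2|)$, finishing the estimate. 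If instead \eqref{D:34} holds, then \eqref{D:31} gives $\max(|\hypwt_0|,|\hypwt_1|,|\hypwt_2|) \gtrsim \min(|\xi_1|,|\xi_2|)$, and the crude bound $|\xi_0|\min(\theta_{01},\theta_{02})^2 \lesssim |\xi_0| \ll \min(|\xi_1|,|\xi_2|)$ closes the argument.

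The one delicate spot is the ``antipodal'' regime \eqref{D:34}, in which $\theta_{12}$ is close to $\pi$ so $\sin\theta_{12}$ is not comparable to $\theta_{12}$; but this is never actually an obstacle, because in that regime \eqref{D:31} alone already dominates $|\xi_0|$ and the angular factor is harmlessly absorbed into a constant. No new geometric computation beyond Lemmas \ref{D:Lemma1}, \ref{F:Lemma2}, \ref{F:Lemma1} is needed.
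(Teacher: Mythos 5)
Your proof is correct and takes essentially the same approach as the paper: dispatch $\pm_0 \neq \pm_{12}$ via Lemma \ref{F:Lemma2}, then when $\pm_0=\pm_{12}$ combine \eqref{F:4} with Lemma \ref{D:Lemma1}. The only cosmetic difference is that you organize the second case by whether \eqref{D:34} holds, whereas the paper splits on $\theta_{12}\ll 1$ versus $\theta_{12}\sim 1$; since \eqref{D:34} forces $\theta_{12}\sim 1$ and its negation yields \eqref{D:32}, the two splits lead to the same computations.
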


\begin{proof} If $\pm_0\neq\pm_{12}$, \eqref{F:14} holds by Lemma \ref{F:Lemma2}, so we may assume $\pm_0=\pm_{12}$, hence \eqref{F:4} is valid. We split into the cases $\theta_{12} \ll 1$ and $\theta_{12} \sim 1$.

If $\theta_{12} \ll 1$, Lemma \ref{D:Lemma1} implies \eqref{D:32}, and using also \eqref{F:4} we get
$$
  \abs{\xi_0} \min\left(\theta_{01},\theta_{02}\right)^2
  \sim
  \frac{\min\left(\abs{\xi_1},\abs{\xi_2}\right)^2}{\abs{\xi_0}} \theta_{12}^2
  \lesssim
  \frac{\min\left(\abs{\xi_1},\abs{\xi_2}\right)\max\left( \abs{\hypwt_0}, \abs{\hypwt_1}, \abs{\hypwt_2} \right)}{\max\left(\abs{\xi_1},\abs{\xi_2}\right)},
$$
so \eqref{F:14} holds. 

Next, assume $\theta_{12} \sim 1$. Then by \eqref{D:30} in Lemma \ref{D:Lemma1},
$$
  \min\left(\abs{\xi_1},\abs{\xi_2}\right)
  \lesssim\max\left( \abs{\hypwt_0}, \abs{\hypwt_1}, \abs{\hypwt_2} \right),
$$
and combining this with \eqref{F:4} we get
$$
  \abs{\xi_0} \min\left(\theta_{01},\theta_{02}\right)^2
  \lesssim
  \abs{\xi_0} \min\left(\theta_{01},\theta_{02}\right)
  \lesssim
  \min\left(\abs{\xi_1},\abs{\xi_2}\right)
  \lesssim
  \max\left( \abs{\hypwt_0}, \abs{\hypwt_1}, \abs{\hypwt_2} \right),
$$
so \eqref{F:14} again holds. 
\end{proof}

Based on the previous lemmas we now prove the following.

\begin{lemma}\label{F:Lemma3} If $\pm_0 = \pm_{12}$ and $\pm_1=\pm_2$, then
\begin{equation}\label{F:20}
  \frac{\abs{\xi_1}\abs{\xi_2}\theta_{12}^2}{\abs{\xi_0}} \sim \min\left( \abs{\xi_0}, \abs{\xi_1}, \abs{\xi_2}\right) \max(\theta_{01},\theta_{02})^2
  \lesssim
  \max\left( \abs{\hypwt_0}, \abs{\hypwt_1}, \abs{\hypwt_2} \right),
\end{equation}
whereas if $\pm_0 = \pm_{12}$ and $\pm_1 \neq \pm_2$, then
\begin{equation}\label{F:22}
  \max\left(\theta_{01},\theta_{02}\right)
  \sim
  \theta_{12}.
\end{equation}

\end{lemma}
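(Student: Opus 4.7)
The second statement \eqref{F:22} is precisely \eqref{F:5} in Lemma \ref{F:Lemma1}, so nothing new needs to be done there. For \eqref{F:20} the plan is to treat separately the equivalence
\begin{equation*}
  \frac{\abs{\xi_1}\abs{\xi_2}\theta_{12}^2}{\abs{\xi_0}}
  \sim \min(\abs{\xi_0},\abs{\xi_1},\abs{\xi_2})\max(\theta_{01},\theta_{02})^2
\end{equation*}
and the subsequent upper bound by $\max(\abs{\hypwt_0},\abs{\hypwt_1},\abs{\hypwt_2})$. The upper bound is almost a triviality once the equivalence is in hand: since the assumption $\pm_1 = \pm_2$ excludes the exceptional case \eqref{D:34} of Lemma \ref{D:Lemma1}, \eqref{D:32} supplies directly $\max(\abs{\hypwt_0},\abs{\hypwt_1},\abs{\hypwt_2}) \gtrsim \abs{\xi_1}\abs{\xi_2}\theta_{12}^2/\abs{\xi_0}$.

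The equivalence is where the real work lies. By the global symmetry $\pm_j \to \mp_j$ (for $j=0,1,2$) we may take $\pm_1 = \pm_2 = +$; by swapping the labels $1$ and $2$ we may further assume $\abs{\xi_1} \ge \abs{\xi_2}$. The geometry of Figure \ref{F:Fig1}(a) then gives $\theta_{02} = \theta_{01} + \theta_{12}$, so $\max(\theta_{01},\theta_{02}) = \theta_{02}$ and $\min(\abs{\xi_1},\abs{\xi_2}) = \abs{\xi_2}$, and the desired equivalence reduces to
\begin{equation*}
  \frac{\abs{\xi_1}\abs{\xi_2}\theta_{12}^2}{\abs{\xi_0}}
  \sim \min(\abs{\xi_0},\abs{\xi_2})\,\theta_{02}^2.
\end{equation*}
The decisive algebraic step is to replace both $\theta_{12}^2$ and $\theta_{02}^2$ by the law-of-cosines expressions in the triangle with sides $\abs{\xi_0},\abs{\xi_1},\abs{\xi_2}$, using $\theta^2 \sim 2(1 - \cos\theta)$ on $[0,\pi]$:
\begin{equation*}
  \theta_{12}^2 \sim \frac{ab}{\abs{\xi_1}\abs{\xi_2}},
  \qquad
  \theta_{02}^2 \sim \frac{ac}{\abs{\xi_0}\abs{\xi_2}},
\end{equation*}
where $a = \abs{\xi_0} + \abs{\xi_2} - \abs{\xi_1}$, $b = \abs{\xi_0} + \abs{\xi_1} - \abs{\xi_2}$ and $c = \abs{\xi_0} + \abs{\xi_1} + \abs{\xi_2}$, all nonnegative by the triangle inequality. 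After substitution the equivalence collapses to showing that $c\min(\abs{\xi_0},\abs{\xi_2})/(b\abs{\xi_2}) \sim 1$.

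A short case split finishes the argument. If $\abs{\xi_0} \ge \abs{\xi_2}$, then $\abs{\xi_0} + \abs{\xi_1} \ge 2\abs{\xi_2}$ yields $b \sim c \sim \abs{\xi_0} + \abs{\xi_1}$ and the ratio is $c/b \sim 1$. If $\abs{\xi_0} < \abs{\xi_2}$, the triangle inequality $\abs{\xi_0} \ge \abs{\xi_1} - \abs{\xi_2}$ combined with $\abs{\xi_0} < \abs{\xi_2}$ forces $\abs{\xi_1} \sim \abs{\xi_2}$, from which $b \sim \abs{\xi_0}$ and $c \sim \abs{\xi_2}$, again giving a ratio $\sim 1$. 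The genuinely delicate point is this last regime: neither $\theta_{12}$ nor $\theta_{02}$ need be small, so the naive guess $\theta_{02} \sim \theta_{12}$ fails, and one must actually exploit the fact that the constraint $\abs{\xi_0} < \abs{\xi_2}$ forces $\abs{\xi_1}$ and $\abs{\xi_2}$ to be comparable.
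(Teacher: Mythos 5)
Your proof is correct, but the route is genuinely different from the paper's. The paper passes through the single pivot quantity $a = \abs{\xi_0} + \abs{\xi_2} - \abs{\xi_1}$ (for the case $\abs{\xi_1}>\abs{\xi_2}$): it reads $\max(\abs{\hypwt_j})\gtrsim a$ off Table \ref{F:Table}, then observes via \eqref{B:4} that $a \sim \min(\abs{\xi_0},\abs{\xi_2})\,\theta_{02}^2$ and via \eqref{B:6} that $a = \abs{\xi_0}-\bigabs{\abs{\xi_1}-\abs{\xi_2}} \sim \abs{\xi_1}\abs{\xi_2}\theta_{12}^2/\abs{\xi_0}$, so both the equivalence and the upper bound fall out in one stroke. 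You instead expand $\theta_{12}^2$ and $\theta_{02}^2$ directly from the law of cosines into the factors $a,b,c$ and close by an explicit two-case verification that $c\min(\abs{\xi_0},\abs{\xi_2})/(b\abs{\xi_2})\sim 1$; this is essentially re-deriving \eqref{B:4}/\eqref{B:6} from scratch rather than quoting them, so it is more self-contained but somewhat longer. For the upper bound you invoke \eqref{D:32} of Lemma \ref{D:Lemma1} after noting that $\pm_1=\pm_2$ rules out \eqref{D:34}; this is a legitimate and arguably cleaner shortcut than the paper's appeal to Table \ref{F:Table}. Your diagnosis of the delicate regime $\abs{\xi_0}<\abs{\xi_2}$ (where $\theta_{02}$ need not be small even though $\theta_{12}$ is, and $\abs{\xi_1}\sim\abs{\xi_2}$ is the mechanism) is accurate. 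One small caveat: the reduction to $\abs{\xi_1}\ge\abs{\xi_2}$ by relabeling also flips $\xi_0\to-\xi_0$ and hence $\pm_0$, and Definition \ref{F:Def} is not literally symmetric at the boundary $\abs{\xi_1}=\abs{\xi_2}$; this is harmless for a $\sim$-type estimate, but worth a word.
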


\begin{proof} The last statement was proved in Lemma \ref{F:Lemma1}, so we only prove \eqref{F:20}. It suffices to consider $(\pm_1,\pm_2) = (+,+)$. If $\abs{\xi_1} > \abs{\xi_2}$, then $\pm_0 = \pm_{12} = +$, and by Table \ref{F:Table},
$$
  \max\left( \abs{\hypwt_0}, \abs{\hypwt_1}, \abs{\hypwt_2} \right)
  \gtrsim 
  \abs{\xi_0} + \abs{\xi_2} - \abs{\xi_1}
  \sim \min\left( \abs{\xi_0}, \abs{\xi_2}\right) \theta_{02}^2,
$$
but also
$$
  \max\left( \abs{\hypwt_0}, \abs{\hypwt_1}, \abs{\hypwt_2} \right)
  \gtrsim 
  \abs{\xi_0} + \abs{\xi_2} - \abs{\xi_1}
  = \abs{\xi_0} - \bigabs{\abs{\xi_1} - \abs{\xi_2}}
  \sim \frac{\abs{\xi_1}\abs{\xi_2}\theta_{12}^2}{\abs{\xi_0}},
$$
where we used \eqref{B:6}. Combining these estimates, and noting that $\theta_{01} \le \theta_{02}$ by \eqref{F:10}, we get the desired estimate. The case $\abs{\xi_1} \le \abs{\xi_2}$ is treated similarly.
\end{proof}

\section{Proof of the dyadic quadrilinear estimate, Part I}\label{G}

Here we prove Theorem \ref{C:Thm2} under the assumption
\begin{equation}\label{G:2}
  L_0 \sim L_0'.
\end{equation}
By symmetry, we may assume
\begin{equation}\label{G:6}
  \theta_{12} \le \theta_{34},
\end{equation}
and
\begin{equation}\label{G:6:2}
  L_1 \le L_2,
  \qquad
  L_3 \le L_4,
\end{equation}
We distinguish the cases
\begin{subequations}\label{G:4}
\begin{alignat}{2}  
  \label{G:4a}
  &L_2 \le L_0',&&
  \\
  \label{G:4b}
  &&
  &L_4 \le L_0,
  \\
  \label{G:4c}
  &L_2 > L_0',&
  \qquad 
  &L_4 > L_0,
\end{alignat}
\end{subequations}
each of which may be split further into the subcases \eqref{D:98} and \eqref{D:99} (recall that \eqref{D:97} has been completely dealt with). In each case it is of course understood that the region of integration in \eqref{C:58} is restricted accordingly.

In view of \eqref{G:6}, the cases \eqref{D:98}, \eqref{D:99} simplify to
\begin{subequations}\label{G:8}
\begin{alignat}{2}
  \label{G:8a}
  &\theta_{12} \ll \phi \lesssim \theta_{34},&
  \qquad &\bigl( \implies \abs{q_{1234}} \lesssim \phi\theta_{34} \bigr)
  \\
  \label{G:8b}
  &\theta_{12} \le \theta_{34} \ll \phi,&
  \qquad &\bigl( \implies \abs{q_{1234}} \lesssim \phi^2 \bigr),
\end{alignat}
\end{subequations}
where the symbol estimates on the right hold by Lemma \ref{D:Lemma2}. By Lemma \ref{D:Lemma1},
\begin{equation}\label{D:134:10}
  \theta_{12} \lesssim \gamma \equiv \min \left( \gamma^*, \biggl(\frac{N_0\Lmax^{0'12}}{N_1N_2}\biggr)^{1/2} \right),
  \qquad \text{for some $0 < \gamma^* \ll 1$}.
\end{equation}
In fact, here we can choose any $0 < \gamma^* \ll 1$ that we want, by adjusting the implicit constant in \eqref{D:97}. By Lemma \ref{D:Lemma1} we also have
\begin{equation}\label{E:4}
  \theta_{34} \lesssim \gamma' \equiv \left( \frac{\Lmax^{034}}{\Nmin^{34}} \right)^{1/2}.
\end{equation}

Observe that, with notation as in \eqref{C:40} and \eqref{D:70},
\begin{equation}\label{E:2}
  \phi \le \min(\theta_{01},\theta_{02}) + \min(\theta_{03},\theta_{04}),
\end{equation}
since $\theta_{jk} \le \theta_{0j} + \theta_{0k}$. By Lemma \ref{F:Lemma4},
\begin{equation}\label{E:2:2}
  \min(\theta_{01},\theta_{02})
  \lesssim \biggl(\frac{\Lmax^{0'12}}{N_0} \biggr)^{1/2},
  \qquad
  \min(\theta_{03},\theta_{04})
  \lesssim \biggl(\frac{\Lmax^{034}}{N_0} \biggr)^{1/2}.
\end{equation}

To simplify, we introduce the shorthand
\begin{equation}
  \label{E:3:1}
  u_{0'12}
  = \Proj_{K^{\pm_0}_{N_0,L_0'}}\left( u_1 \overline{u_2} \right),
  \qquad
  u_{043}
  = \Proj_{K^{\pm_0}_{N_0,L_0}}\left( u_4 \overline{u_3} \right),
\end{equation}
with notation as in \eqref{B:200}. We define $\pm_{12}$ and $\pm_{43}$ as in Definition \ref{F:Def}, recalling that $\xi_0 = \xi_1-\xi_2 = \xi_4-\xi_3$ by \eqref{C:35}. Note the following important relations between the angles $\theta_{12}, \theta_{01}, \theta_{02}$ in the low and high output interactions (recall \eqref{C:41}):
\begin{alignat}{2}
  \label{E:3:2}
  &\pm_0=\pm_{12}, \; \theta_{12} \ll 1, \; N_0 \ll N_1 \sim N_2&
  \quad \implies \quad
  &\theta_{01} \sim \theta_{02} \sim \frac{N_1}{N_0} \theta_{12},
  \\
  \label{E:3:3}
  &\pm_0=\pm_{12}, \; \theta_{12} \ll 1, \; N_1 \lesssim N_0 \sim N_2&
  \quad \implies \quad
  &\theta_{12} \sim \theta_{01} \sim \frac{N_0}{N_1} \theta_{02}.
\end{alignat}
This follows by Lemma \ref{F:Lemma1}, \eqref{F:10} in the proof of that lemma, and Lemma \ref{F:Lemma3}. Note also that \eqref{E:3:2} can only happen if $\pm_1=\pm_2$. Of course, \eqref{E:3:3} applies symmetrically when $N_2 \lesssim N_0 \sim N_1$. Analogous estimates apply to the index 043.

\subsection{Case \eqref{G:4a}}

Then we can treat the cases \eqref{G:8a} and \eqref{G:8b} simultaneously by using Lemma \ref{D:Lemma6} and pairing up $u_1$ with $u_3$, and $u_2$ with $u_4$. After an angular decomposition we then apply the null form estimate with tube restriction.

Discard the characteristic functions of $K^{\pm_0}_{N_0,L_0}, K^{\pm_0}_{N_0,L_0'}$ in \eqref{C:58} (this is reasonable in view of \eqref{G:4a} and \eqref{G:2}), and estimate $\abs{q_{1234}}$ by Lemma \ref{D:Lemma6}. To achieve the desired pairing of the $u$'s, we change the variables $(\tau_0',\tau_0,\xi_0)$ in \eqref{C:58} to
$$
  \tilde\tau_0' = \tau_1+\tau_3,
  \qquad
  \tilde\tau_0 = \tau_2+\tau_4,
  \qquad
  \tilde\xi_0 = \xi_1+\xi_3 = \xi_2+\xi_4.
$$
By \eqref{C:35} and \eqref{C:49}, $\tilde\tau_0' - \tilde\tau_0 = \tau_0' - \tau_0$, so the symbol \eqref{C:90:2} is invariant under the change of variables:
\begin{equation}\label{G:5}
  a_{L_0,L_0'}^{\pm_0}(\tau_0,\tau_0',\xi_0) =
  a_{L_0,L_0'}^{\pm_0}(\tilde\tau_0,\tilde\tau_0',\tilde\xi_0).
\end{equation}
Note that this relies on the assumption $L_0 \sim L_0'$. We conclude that
\begin{equation}\label{D:134}
\begin{aligned}
  J_{\boldN,\boldL}^{\mathbf\Sigma}
  &\lesssim
  \int
  T_{L_0,L_0'} \mathcal F\mathfrak B'_{\theta_{13}}(u_1,u_3)(\tilde X_0)
  \cdot \mathcal F \mathfrak B'_{\theta_{24}}(u_2,u_4)(\tilde X_0)
  \, d\tilde X_0
  \\
  &\lesssim
  \norm{\mathfrak B'_{\theta_{13}}(u_1,u_3)}
  \norm{\mathfrak B'_{\theta_{24}}(u_2,u_4)},
\end{aligned}
\end{equation}
where we used Lemma \ref{D:Lemma3}.

The question is now how to estimate the last two factors in \eqref{D:134}. Observe that there is no point in applying Lemma \ref{D:Lemma1} with respect to the angles $\theta_{13}, \theta_{24}$, since we know nothing about the associated $\hypwt_0$-weights. In other words, Theorem \ref{D:Thm} is not useful here. Instead, we use the upper bound \eqref{D:134:10} for $\theta_{12}$. Although this angle does not explicitly appear in \eqref{D:134}, it can still make its influence felt if we use Lemma \ref{B:Lemma4} to angularly decompose $(\pm_1\xi_1,\pm_2\xi_2)$ before we use \eqref{D:134}. The angular decomposition restricts the spatial Fourier supports to tubes, so we can apply Theorem \ref{G:Thm}. Let us turn to the details.

Making use of the restriction \eqref{D:134:10}, we apply Lemma \ref{B:Lemma5} to the pair $(\pm_1\xi_1,\pm_2\xi_2)$, and then we use \eqref{D:134}, thus obtaining
\begin{equation}\label{D:134:2}
  J_{\boldN,\boldL}^{\mathbf\Sigma}
  \lesssim
  \sum_{\omega_1,\omega_2}
  \norm{\mathfrak B'_{\theta_{13}}(u_1^{\gamma,\omega_1},u_3)}
  \norm{ \mathfrak B'_{\theta_{24}}(u_2^{\gamma,\omega_2},u_4)},
\end{equation}
where the sum is over $\omega_1,\omega_2 \in \Omega(\gamma)$ satisfying $\theta(\omega_1,\omega_2) \lesssim \gamma$, and we use the notation \eqref{B:202} (indexed by $1,2$, except for $\gamma$). Since the spatial frequency $\xi_j$ of $u_j^{\gamma,\omega_j}$ is restricted to a tube of radius comparable to $N_j\gamma$ about $\R\omega_j$, we can apply Theorem \ref{G:Thm} to \eqref{D:134:2}. The result is
\begin{equation}\label{D:134:3}
\begin{aligned}
  J_{\boldN,\boldL}^{\mathbf\Sigma}
  &\lesssim
  N_1 N_2 \gamma^2
  \left( L_1 L_2 L_3 L_4 \right)^{1/2}
  \sum_{\omega_1,\omega_2}\norm{u_1^{\gamma,\omega_1}}\norm{u_2^{\gamma,\omega_2}}\norm{u_3}\norm{u_4}
  \\
  &\lesssim
  N_0 L_0'
  \left( L_1 L_2 L_3 L_4 \right)^{1/2}
  \norm{u_1}\norm{u_2}\norm{u_3}\norm{u_4},
\end{aligned}
\end{equation}
where we summed $\omega_1,\omega_2$ as in \eqref{B:208}, and used the definition\eqref{D:134:10} of $\gamma$, taking into account the assumption \eqref{G:4a}. In view of \eqref{G:2}, this proves \eqref{C:80}.

This concludes case \eqref{G:4a}.

\subsection{Case \eqref{G:4b}}

If $\theta_{34} \ll 1$, then we have the analogue of \eqref{D:134:10}, so by symmetry the argument used for case \eqref{G:4a} above applies, with the roles of the indices 12 and 34 reversed.

It then remains to consider $\theta_{34} \sim 1$. Then $\Nmin^{34} \lesssim L_0$, by Lemma \ref{D:Lemma1}. Moreover, we may assume $L_2 > L_0'$, since the case $L_2 \le L_0'$ was completely dealt with above. Now trivially estimate $\abs{q_{1234}} \lesssim 1$. Then with notation as in \eqref{E:3:1},
\begin{align*}
  J_{\boldN,\boldL}^{\mathbf\Sigma}
  &\lesssim
  \norm{u_{0'12}}
  \norm{u_{043}}
  \\
  &\lesssim
   \left(N_0^2L_0'L_1\right)^{1/2}\left((\Nmin^{34})^2L_3L_4\right)^{1/2}
  \norm{u_1} \norm{u_2} \norm{u_3} \norm{u_4}
  \\
  &\lesssim
   \left(N_0^2L_1L_2\right)^{1/2}\left(L_0^2L_3L_4\right)^{1/2}
  \norm{u_1} \norm{u_2} \norm{u_3} \norm{u_4},
\end{align*}
where the first inequality holds by the Cauchy-Schwarz inequality and Lemma \ref{D:Lemma3}, the second by Theorem \ref{M:Thm}, and the third by the assumption $L_2 > L_0'$ and the fact that $\Nmin^{34} \lesssim L_0$. In view of \eqref{G:2}, this implies \eqref{C:80}.

This concludes the case \eqref{G:4b}.

\subsection{Case \eqref{G:4c}}

So far we were able to treat \eqref{G:8a} and \eqref{G:8b} simultaneously, but from now on we need to separate the two, and we further divide into subcases depending on which term dominates in the right hand side of \eqref{E:2}:
\begin{subequations}\label{G:10}
\begin{alignat}{2}
  \label{G:10a}
  &\theta_{12} \ll \phi \lesssim \theta_{34},&
  \qquad
  &\min(\theta_{01},\theta_{02}) \ge \min(\theta_{03},\theta_{04}),
  \\
  \label{G:10b}
  &\theta_{12} \ll \phi \lesssim \theta_{34},&
  \qquad
  &\min(\theta_{01},\theta_{02}) < \min(\theta_{03},\theta_{04}),
  \\
  \label{G:10c}
  &\theta_{12} \le \theta_{34} \ll \phi,&
  \qquad
  &\min(\theta_{01},\theta_{02}) < \min(\theta_{03},\theta_{04}),
  \\
  \label{G:10d}
  &\theta_{12} \le \theta_{34} \ll \phi,&
  \qquad
  &\min(\theta_{01},\theta_{02}) \ge \min(\theta_{03},\theta_{04}).
\end{alignat}
\end{subequations}
Subcase \eqref{G:10b} is by far the most difficult, and will be split further into  subcases.

\subsection{Case \eqref{G:4c}, subcase \eqref{G:10a}}\label{G:12:0}

Then by \eqref{G:8a} and \eqref{E:4}--\eqref{E:2:2},
\begin{equation}\label{G:12}
  \abs{q_{1234}}
  \lesssim \phi\theta_{34}
  \lesssim
  \biggl( \frac{L_2}{N_0} \biggr)^{1/2}
  \left( \frac{L_4}{\Nmin^{34}} \right)^{1/2},
\end{equation}
hence
\begin{equation}\label{G:12:2}
\begin{aligned}
  J_{\boldN,\boldL}^{\mathbf\Sigma}
  &\lesssim
  \biggl( \frac{L_2}{N_0} \biggr)^{1/2}
  \left( \frac{L_4}{\Nmin^{34}} \right)^{1/2} 
  \norm{u_{0'12}} \bignorm{u_{043}}
  \\
  &\lesssim
  \biggl( \frac{L_2}{N_0}
  \cdot \frac{L_4}{\Nmin^{34}} 
  \cdot N_0^2 L_0' L_1
  \cdot N_0 \Nmin^{034} L_0 L_3 \biggr)^{1/2}
  \prod_{j=1}^4 \norm{u_j},
\end{aligned}
\end{equation}
where we used the Cauchy-Schwarz inequality, Lemma \ref{D:Lemma3} and Theorem \ref{M:Thm}.

\subsection{Case \eqref{G:4c}, subcase \eqref{G:10b}}\label{G:14:0}

Then by \eqref{G:8a} and \eqref{E:4},
\begin{equation}\label{G:14:2}
  \abs{q_{1234}} \lesssim \phi\theta_{34}
  \lesssim \min(\theta_{03},\theta_{04}) \left( \frac{L_4}{\Nmin^{34}} \right)^{1/2},
\end{equation}
so by \eqref{D:134:10} and Lemma \ref{B:Lemma5}, applied to the pair $(\pm_1\xi_1,\pm_2\xi_2)$,
\begin{equation}\label{G:14:4}
  J_{\boldN,\boldL}^{\mathbf\Sigma}
  \lesssim
  \sum_{\omega_1,\omega_2} \left( \frac{L_4}{\Nmin^{34}} \right)^{1/2}
  \norm{\mathfrak B_{\theta_{03}}' \left( \Proj_{K^{\pm_0}_{N_0,L_0}} \mathcal F^{-1} T_{L_0,L_0'}^{\pm_0} \mathcal F u_{0'12}^{\gamma,\omega_1,\omega_2}, u_3 \right)} \norm{u_4},
\end{equation}
where
\begin{equation}\label{G:14:6}
  u^{\gamma,\omega_1,\omega_2}_{0'12}
  =
  \Proj_{K^{\pm_0}_{N_0,L_0'}} 
  \left( u_1^{\gamma,\omega_1}
  \overline{ u_2^{\gamma,\omega_2} } \right)
\end{equation}
and the sum is over $\omega_1,\omega_2 \in \Omega(\gamma)$ with $\theta(\omega_1,\omega_2) \lesssim \gamma$. Since the spatial Fourier support of $u_j^{\gamma,\omega_j}$ is contained in a tube of radius comparable to $N_j\gamma$ about $\R\omega_j$, it follows that the spatial Fourier support of $u^{\gamma,\omega_1,\omega_2}_{0'12}$ is contained in a tube of radius comparable to $\Nmax^{12}\gamma$ around $\R\omega_1$. Therefore, by Theorem \ref{G:Thm}, Lemma \ref{D:Lemma3} and Theorem \ref{M:Thm},
\begin{equation}\label{G:14:8}
\begin{aligned}
  J_{\boldN,\boldL}^{\mathbf\Sigma}
  &\lesssim
  \sum_{\omega_1,\omega_2} \left( \frac{L_4}{\Nmin^{34}} \right)^{1/2}
  \Nmax^{12}\gamma \left(L_0L_3\right)^{1/2}
  \norm{u_{0'12}^{\gamma,\omega_1,\omega_2}}
  \norm{u_3} \norm{u_4}
  \\
  &\lesssim
  \left( \frac{L_4}{\Nmin^{34}} \right)^{1/2}
  \Nmax^{12} \left( \frac{N_0L_2}{N_1N_2} \right)^{1/2} \left(L_0L_3
  \cdot N_0 \Nmin^{012} L_0'L_1 \right)^{1/2} 
  \prod_{j=1}^4 \norm{u_j}
  \\
  &\lesssim
  \left( \frac{N_0}{\Nmin^{34}} N_0^2 L_0L_0'L_1L_2L_3L_4  \right)^{1/2} 
  \prod_{j=1}^4 \norm{u_j}.
\end{aligned}
\end{equation}
Here we summed $\omega_1,\omega_2$ as in \eqref{B:208}, in the second step we used the definition of $\gamma$ from \eqref{D:134:10}, recalling the assumption \eqref{G:4c}, and in the last step we used \eqref{C:43}.

From \eqref{G:14:8} we get the desired estimate if $N_0 \lesssim \Nmin^{34}$, but also whenever we are able to gain an extra factor $(\Nmin^{34}/N_0)^{1/2}$. In particular, this happens if $\pm_0 \neq \pm_{43}$, since then $N_0 \lesssim \Lmax^{034}$ by Lemma \ref{F:Lemma2}, so instead of \eqref{E:4} we can use the estimate $\theta_{34} \lesssim 1 \lesssim (\Lmax^{034}/N_0)^{1/2}$, thereby gaining  the desired factor.

Thus, we may assume $\pm_0 = \pm_{43}$. Moreover, we can assume $\pm_0=\pm_{12}$, since if this fails to hold, then Lemma \ref{F:Lemma2} implies $N_0 \lesssim L_2$, hence the argument in section \ref{G:12:0} applies.

It remains to consider $N_3 \ll N_0 \sim N_4$ and $N_4 \ll N_0 \sim N_3$, but the case $N_3 \ll N_0 \sim N_4$ is easy: By \eqref{E:3:3} and \eqref{G:14:2},
\begin{equation}\label{G:14:10}
  N_3 \ll N_0 \sim N_4 \implies
  \theta_{04} \lesssim \frac{N_3}{N_0} \theta_{34}, \quad \theta_{03} \sim \theta_{34},
  \quad
  \abs{q_{1234}} \lesssim \frac{N_3}{N_0} \theta_{03} \theta_{34},
\end{equation}
hence we gain a factor $N_3/N_0$ in \eqref{G:14:8}, which is more than enough.

That leaves the interaction $N_4 \ll N_0 \sim N_3$, which is much harder; we split it further into $N_0 \lesssim N_2$ and $N_2 \ll N_0$, treated in the next two sections. Recall also that $\pm_0=\pm_{12}=\pm_{34}$ by the above reductions, and that we are assuming \eqref{G:6:2}, by symmetry.

\subsection{Case \eqref{G:4c}, subcase \eqref{G:10b}, $\pm_0=\pm_{12}=\pm_{43}$, $N_4 \ll N_0 \sim N_3$, $N_0 \lesssim N_2$}\label{G:20:0}

We can insert $\Proj_{\abs{\xi_4} \lesssim N_4}$ in front of $\mathfrak B_{\theta_{03}}'$ in \eqref{G:14:8}, and instead of Theorem \ref{G:Thm} we then apply Theorem \ref{J:Thm1}. Let us check that the hypotheses of the latter are satisfied. We have $N_4 \ll N_0 \sim N_3$, hence $N_0,N_3 \gg 1$ and $\theta_{03} \ll 1$ (by the analogue of \eqref{G:14:10}), so $\mathfrak B_{\theta_{03}}$ can be replaced by $\mathfrak B_{\theta_{03} \ll 1}$ in \eqref{G:14:4}. The hypothesis \eqref{J:0:2} translates to, in the present situation,
\begin{equation}\label{G:20:2}
  \Nmax^{12} \gamma \ll N_0,
\end{equation}
where $\gamma$ is given by \eqref{D:134:10}.

But if \eqref{G:20:2} fails, then $N_0 \ll N_1 \sim N_2$, so by \eqref{D:134:10} we see that the failure of \eqref{G:20:2} is equivalent to $N_0 \lesssim L_2$, hence the argument in section \ref{G:12:0} applies.

Thus, we can assume \eqref{G:20:2}, hence Theorem \ref{J:Thm1} applies, so in \eqref{G:14:8} we can replace $\norm{u_{0'12}^{\gamma,\omega_1,\omega_2}}$ by
$$
  \sup_{I} \norm{\Proj_{\xi_0 \cdot \omega_1 \in I} u_{0'12}^{\gamma,\omega_1,\omega_2}},
$$
where the supremum is over all intervals $I \subset \R$ with length $\abs{I} = N_4$. But since $\gamma \ll 1$, Theorem \ref{J:Thm3} implies, via duality,
\begin{equation}\label{G:20:6}
  \sup_I \norm{\Proj_{\xi_0 \cdot \omega_1 \in I}
  u_{0'12}^{\gamma,\omega_1,\omega_2}}
  \lesssim
  \bigl(N_4 \Nmin^{01} L_0'L_1 \bigr)^{1/2}
  \norm{u_1^{\gamma,\omega_1}} \norm{u_2^{\gamma,\omega_2}}.
\end{equation}
Thus, in the second line of \eqref{G:14:8}, the combination $N_0\Nmin^{012}L_0'L_1$ can be replaced by $N_4\Nmin^{01}L_0'L_1$, and since we are assuming $N_0 \lesssim N_2$, this means that we gain the desired factor $(N_4/N_0)^{1/2}$.

\subsection{Case \eqref{G:4c}, subcase \eqref{G:10b}, $\pm_0=\pm_{12}=\pm_{43}$, $N_4 \ll N_0 \sim N_3$, $N_2 \ll N_0$}\label{G:20:7}

To make the argument from the previous section work, we need to somehow gain a factor $(N_2/N_0)^{1/2}$. We shall apply an argument which essentially is the same as the one introduced in \cite{Selberg:2008a} to prove the result stated here as Theorem \ref{J:Thm2}.

Let us first dispose of the easy case $N_2 \sim 1$. Then we simply estimate
\begin{equation}\label{G:20:8:2}
  J_{\boldN,\boldL}^{\mathbf\Sigma}
  \lesssim
  \norm{u_{0'12}} \bignorm{u_{043}}
  \lesssim
  \left( N_2^2 L_1 L_2
  \cdot N_0^2 L_0 L_3 \right)^{1/2}
  \prod_{j=1}^4 \norm{u_j},
\end{equation}
where we used Lemma \ref{D:Lemma3} and Theorem \ref{M:Thm}.

For the rest of this section we therefore assume $1 \ll N_2 \ll N_0 \sim N_1$. This ensures that the region described by $\angles{\xi_2} \sim N_2$ does not degenerate to a ball.

By \eqref{E:3:3} and \eqref{D:134:10},
\begin{equation}\label{G:20:8}
  \theta_{12} \sim \theta_{02} \sim \frac{N_0}{N_2} \theta_{01},
  \qquad \text{hence}
  \qquad
  \theta_{01} \lesssim \alpha \equiv \frac{N_2}{N_0} \gamma.
\end{equation}
Now modify \eqref{G:14:4} by applying Lemma \ref{B:Lemma5} again, this time to $(\pm_0\xi_0,\pm_1\xi_1)$:
\begin{multline}\label{G:20:10}
  J_{\boldN,\boldL}^{\mathbf\Sigma}
  \lesssim
  \sum_{\omega_1,\omega_2} \sum_{\omega_0',\omega_1'} \left( \frac{L_4}{N_4} \right)^{1/2}
  \\
  \times
  \norm{\Proj_{\abs{\xi_4} \lesssim N_4}
  \mathfrak B_{\theta_{03} \ll 1}' \left( \Proj_{K^{\pm_0}_{N_0,L_0}} \mathcal F^{-1} T_{L_0,L_0'}^{\pm_0} \mathcal F u_{0'12}^{\gamma,\omega_1,\omega_2;\alpha,\omega_0',\omega_1'}, u_3 \right)} \norm{u_4},
\end{multline}
where the second sum is over $\omega_0',\omega_1' \in \Omega(\alpha)$ satisfying $\theta(\omega_0',\omega_1') \lesssim \alpha$, and
\begin{align}
  \label{G:20:12}
  u_{0'12}^{\gamma,\omega_1,\omega_2;\alpha,\omega_0',\omega_1'}
  &= \Proj_{\pm_0\xi_0 \in \Gamma_{\alpha}(\omega_0')} 
  \Proj_{K^{\pm_0}_{N_0,L_0'}} \left( u_1^{\gamma,\omega_1;\alpha,\omega_1'}
  u_2^{\gamma,\omega_2} \right)
  \\
  \label{G:20:14}
  u_1^{\gamma,\omega_1;\alpha,\omega_1'}
  &=
  \Proj_{\pm_1\xi_1 \in \Gamma_{\alpha}(\omega_1')} u_1^{\gamma,\omega_1}.
\end{align}
Observe that the spatial Fourier support of \eqref{G:20:12} is contained in a tube of radius comparable to $N_0\alpha \sim N_2\gamma$ around $\R\omega_0'$; this tube is much thinner than the one for \eqref{G:14:6}, which is of radius comparable to $N_1\gamma$ around $\R\omega_1$, hence we gain a factor $N_2/N_0$ when we apply Theorem \ref{J:Thm1}, compared to our estimates in section \ref{G:20:0}. On the other hand, we now have the additional sum over $\omega_0',\omega_1'$. To come out on top, we have to make sure that this sum does not cost us more than a factor $(N_0/N_2)^{1/2}$; this requires some orthogonality, which is supplied by the following crucial information, to be fed into Lemma \ref{J:Lemma}.

As in \eqref{C:35:1}, let $(X_0',X_1,X_2)$ denote the bilinear interaction for \eqref{G:20:12}, so that $X_0'=X_1-X_2$. By \eqref{B:112},
$$
  X_0' \in H_{\max(L_0',N_0\alpha^2)}(\omega_1'),
  \qquad
  X_1 \in H_{\max(L_1,N_0\alpha^2)}(\omega_1'),
$$
where the latter relies on the assumption $\theta(\omega_0',\omega_1') \lesssim \alpha$. Therefore,
\begin{equation}\label{G:20:16}
  X_2 = X_1-X_0' \in H_d(\omega_1'),
  \qquad \text{where}
  \qquad
  d = \max\bigl(\Lmax^{0'1},N_0\alpha^2\bigr),
\end{equation}
so we can insert $\Proj_{H_d(\omega_1')}$ in front of $u_2^{\gamma,\omega_2}$ in \eqref{G:20:12}.

With this information in hand, we estimate \eqref{G:20:10}. Apply Theorem \ref{J:Thm1}, recalling the crucial fact that the tube radius is now $N_2\gamma$. Repeating also the argument from section \ref{G:20:0}, but now with $u_2^{\gamma,\omega_2}$ replaced by $\Proj_{H_d(\omega_1')}u_2^{\gamma,\omega_2}$, we get
\begin{equation}\label{G:20:18}
\begin{aligned}
  J_{\boldN,\boldL}^{\mathbf\Sigma}
  &\lesssim
  \left( \frac{L_4}{N_4} \right)^{1/2}
  N_2\gamma \left(L_0L_3\right)^{1/2}
  \bigl(N_4 N_0 L_0'L_1 \bigr)^{1/2}
  \\
  &\qquad
  \times
  \sum_{\omega_1,\omega_2} \sum_{\omega_0',\omega_1'}
  \bignorm{u_1^{\gamma,\omega_1;\alpha,\omega_1'}}
  \norm{\Proj_{H_d(\omega_1')} u_2^{\gamma,\omega_2}}
  \bignorm{u_3} \bignorm{u_4}
  \\
  &\lesssim
  \left( \frac{L_4}{N_4} \right)^{1/2}
  N_2 \left( \frac{L_2}{N_2} \right)^{1/2} \left(L_0L_3\right)^{1/2}
  \bigl(N_4 N_0 L_0'L_1 \bigr)^{1/2}
  \\
  &\qquad
  \times
  \sum_{\omega_1,\omega_2}
  \sqrt{B(\omega_1)}
  \bignorm{u_1^{\gamma,\omega_1}}
  \norm{u_2^{\gamma,\omega_2}}
  \bignorm{u_3} \bignorm{u_4},
\end{aligned}
\end{equation}
where
\begin{equation}\label{G:20:20}
  B(\omega_1)
  =
  \sup_{(\tau,\xi), \; \abs{\xi} \sim N_2}
  \sum_{\genfrac{}{}{0pt}{1}{\omega_1' \in \Omega(\alpha)}{\theta(\omega_1',\omega_1) \lesssim \gamma}} \chi_{H_d(\omega_1')}(\tau,\xi)
\end{equation}
The second inequality in \eqref{G:20:18} was obtained by the Cauchy-Schwarz inequality, and we used also Lemma \eqref{B:Lemma3} and \eqref{B:204}.

If we can prove that
\begin{equation}\label{G:20:22}
  \sup_{\omega \in \mathbb S^2} B(\omega)
  \lesssim
  \frac{N_0}{N_2},
\end{equation}
then summing $\omega_1,\omega_2$ as in \eqref{B:208} we get the desired estimate \eqref{C:80} from \eqref{G:20:18}.

By Lemma \ref{J:Lemma},
$$
  \sup_{\omega \in \mathbb S^2} B(\omega)
  \lesssim
  \frac{\gamma}{\alpha} + \frac{d}{N_2\alpha^2}.
$$
The first term on the right hand side is comparable to $N_0/N_2$, by \eqref{G:20:8}. As for the second term, this is also comparable to $N_0/N_2$ if $d = N_0\alpha^2$. In view of the definition of $d$, in \eqref{G:20:16}, it then remains to consider $d=\Lmax^{0'1}$, which happens when $N_0\alpha^2 \le \Lmax^{0'1}$. Then instead of \eqref{G:20:22} we only get
\begin{equation}\label{G:20:24}
  \sup_{\omega \in \mathbb S^2} B(\omega)
  \lesssim
  \frac{\Lmax^{0'1}}{N_2\alpha^2},
\end{equation}
but to compensate we can use the following replacement for \eqref{G:20:6}:
\begin{equation}\label{G:20:26}
  \norm{\Proj_{\xi_0 \cdot \omega_1 \in I}
  u_{0'12}^{\gamma,\omega_1,\omega_2}}
  \lesssim
  \bigl(N_4 (N_2\gamma)^2 \Lmin^{0'1} \bigr)^{1/2}
  \norm{u_1^{\gamma,\omega_1}} \norm{u_2^{\gamma,\omega_2}},
\end{equation}
which in fact reduces to a more or less trivial volume estimate; a proof is given in \cite[Sect.\ 8]{Selberg:2008a}. If we now apply \eqref{G:20:26} instead of \eqref{G:20:6}, but of course with $u_2^{\gamma,\omega_2}$ replaced by $\Proj_{H_d(\omega_1')}u_2^{\gamma,\omega_2}$, then we must multiply \eqref{G:20:18} by the square root of
$$
  \frac{(N_2 \gamma)^2}{N_0\Lmax^{0'1}}
$$
But multiplying this by the right hand side of \eqref{G:20:24} gives the factor in the right hand side of \eqref{G:20:22}, so the net effect is the same.

The proof for case \eqref{G:4c}, subcase \eqref{G:10b} is now complete. Note that we did not use the assumption $L_0 \sim L_0'$ for this.

\subsection{Case \eqref{G:4c}, subcase \eqref{G:10c}}

Then by \eqref{G:8b}, \eqref{E:2} and \eqref{E:2:2},
\begin{equation}\label{G:22:2}
  \abs{q_{1234}} \lesssim \phi^2
  \lesssim \min(\theta_{03},\theta_{04}) \left(\frac{L_4}{N_0} \right)^{1/2}.
\end{equation}
Comparing with \eqref{G:14:2}, we then we get \eqref{G:14:8} with an extra factor $(\Nmin^{34}/N_0)^{1/2}$, implying the desired estimate.

\subsection{Case \eqref{G:4c}, subcase \eqref{G:10d}} This follows by the argument from the previous section, by symmetry (reverse the roles of 12 and 34). This works because we know that $\theta_{12},\theta_{34} \ll 1$, hence \eqref{D:134:10} holds, as does its analogue for $\theta_{34}$.

This concludes the proof of Theorem \ref{C:Thm2} for $L_0 \sim L_0'$.

\section{Proof of the dyadic quadrilinear estimate, Part II}\label{E}

It remains to prove Theorem \ref{C:Thm2} when
\begin{equation}\label{G:24}
  L_0 \ll L_0' \qquad \text{or} \qquad L_0 \gg L_0'.
\end{equation}
By symmetry we may assume
\begin{equation}\label{G:24:2}
  L_1 \le L_2,
  \qquad
  L_3 \le L_4.
\end{equation}
Unlike in section \ref{G}, we do not use \eqref{G:4a}--\eqref{G:4c} as our main cases, the reason being that the argument used in cases \eqref{G:4a} and \eqref{G:4b} only works when $L_0 \sim L_0'$, since it relies on \eqref{G:5}, and moreover $L_0,L_0'$ do not appear symmetrically in \eqref{D:134:3}. Instead we use \eqref{D:98} and \eqref{D:99} as our main cases. In the case \eqref{D:98}, we assume without loss of generality that $\theta_{12} \le \theta_{34}$. This we could also assume in case \eqref{D:99}, of course, but it serves no purpose. Instead, in case \eqref{D:99} we use the symmetry to assume without loss of generality that the first term in \eqref{E:2} dominates. We also use the latter to split \eqref{D:98} into two subcases. Thus, it suffices to consider the following cases:
\begin{subequations}\label{E:1}
\begin{alignat}{2}
  \label{E:1a}
  &\theta_{12} \ll \phi \lesssim \theta_{34},&
  \qquad
  &\min(\theta_{01},\theta_{02}) \ge \min(\theta_{03},\theta_{04}),
  \\
  \label{E:1c}
  &\theta_{12} \ll \phi \lesssim \theta_{34},&
  \qquad
  &\min(\theta_{01},\theta_{02}) < \min(\theta_{03},\theta_{04}),
  \\
  \label{E:1e}
  &\theta_{12}, \theta_{34} \ll \phi,&
  \qquad
  &\min(\theta_{01},\theta_{02}) \le \min(\theta_{03},\theta_{04}).
\end{alignat}
\end{subequations}
The latter two we will further split into
\begin{subequations}\label{E:0}
\begin{alignat}{2}  
  \label{E:0a}
  &L_2 > L_0',&
  \qquad 
  &L_4 > L_0,
  \\
  \label{E:0b}
  &L_2 \le L_0',&
  \qquad 
  &L_4 > L_0,
  \\
  \label{E:0c}
  &L_2 \le L_0',&
  \qquad
  &L_4 \le L_0,
  \\
  \label{E:0d}
  &L_2 > L_0',&
  \qquad
  &L_4 \le L_0.
\end{alignat}
\end{subequations}

We may assume that
\begin{equation}\label{G:24:4:2}
  \Nmin^{12}, \Nmin^{34} \gg 1,
\end{equation}
since otherwise trivial estimates analogous to \eqref{G:20:8:2} apply.

\subsection{Case \eqref{E:1a}}

Then by \eqref{E:4}--\eqref{E:2:2},
$$
  \abs{q_{1234}}
  \lesssim \phi\theta_{34}
  \lesssim
  \biggl( \frac{\Lmax^{0'12}}{N_0} \biggr)^{1/2}
  \left( \frac{\Lmax^{034}}{\Nmin^{34}} \right)^{1/2},
$$
so with notation as in \eqref{E:3:1},
\begin{equation}\label{E:6}
  J_{\boldN,\boldL}^{\mathbf\Sigma}
  \lesssim
  \biggl( \frac{\Lmax^{0'12}}{N_0} \biggr)^{1/2}
  \left( \frac{\Lmax^{034}}{\Nmin^{34}} \right)^{1/2}
  \bignorm{T_{L_0,L_0'}^{\pm_0}\mathcal F u_{0'12}} \bignorm{u_{043}}.
\end{equation}
If we apply Lemma \ref{D:Lemma3} followed by Theorem \ref{M:Thm}, we get the desired estimate except in the case $N_0 \ll N_1 \sim N_2$. But we can do better by applying the following:

\begin{lemma}\label{E:Lemma1} Assuming \eqref{G:24} holds, we have
\begin{equation}\label{E:12}
  \bignorm{T_{L_0,L_0'}^{\pm_0}\mathcal F u_{0'12}}
  \lesssim
  \left(N_0 \Nmin^{012} \Lmin^{0'12} \Lmed^{0'12} \right)^{1/2} 
  \norm{u_1} \norm{u_2}.
\end{equation}
\end{lemma}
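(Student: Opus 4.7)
The plan is to exploit the assumption $L_0\ne L_0'$ to render the operator $T^{\pm_0}_{L_0,L_0'}$ essentially transparent, reducing the claim to a bound on the $L^2_x$ norm of the time trace $u_{0'12}(0,\cdot)$, and then to recover that trace by duality against the bilinear estimates of Theorem \ref{M:Thm}. Under \eqref{G:24} the kernel $a^{\pm_0}_{L_0,L_0'}$ is simply $(L_0L_0')^{-1/2}$ on its support set, and since $\mathcal F u_{0'12}$ is already supported in $K^{\pm_0}_{N_0,L_0'}$,
\[
T^{\pm_0}_{L_0,L_0'}\mathcal F u_{0'12}(\tau_0,\xi_0)
= \frac{\chi_{K^{\pm_0}_{N_0,L_0}}(\tau_0,\xi_0)}{(L_0L_0')^{1/2}}\, G(\xi_0), \qquad
G(\xi_0):=\int \mathcal F u_{0'12}(\tau_0',\xi_0)\,d\tau_0'.
\]
Since $G(\xi_0)$ is, up to an absolute constant, the spatial Fourier transform of $u_{0'12}(0,\cdot)$, direct computation of $\bignorm{T^{\pm_0}_{L_0,L_0'}\mathcal F u_{0'12}}$ and Plancherel yield
\[
\bignorm{T^{\pm_0}_{L_0,L_0'}\mathcal F u_{0'12}}^2 \sim \frac{1}{L_0'} \bignorm{u_{0'12}(0,\cdot)}_{L^2_x}^2,
\]
so it suffices to prove $\bignorm{u_{0'12}(0,\cdot)}_{L^2_x}^2 \lesssim L_0' N_0 \Nmin^{012}\Lmin^{0'12}\Lmed^{0'12}\norm{u_1}^2\norm{u_2}^2$.

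By duality against a unit-norm $\psi \in L^2(\R^3)$ the latter reduces to a trilinear bound: expanding the projection gives $\int u_{0'12}(0,x)\overline{\psi(x)}\,dx = \iint u_1\overline{u_2}\,\overline\Psi\,dt\,dx$, where $\Psi$ is defined by $\widetilde \Psi \simeq \chi_{K^{\pm_0}_{N_0,L_0'}}(\tau,\xi)\overline{\widehat\psi(\xi)}$ and therefore satisfies $\norm{\Psi}_{L^2}^2 \sim L_0'$. Applying the dualized form of \eqref{M:14} (for $j=1$ or $j=2$),
\[
\Abs{\iint u_1\overline{u_2}\,\overline\Psi}
\lesssim \bigl(\Nmin^{012}\Nmin^{0j} L_0' L_j\bigr)^{1/2}\norm{u_1}\norm{u_2}\norm{\Psi},
\]
and using $\Nmin^{0j}\le N_0$ together with $\norm{\Psi}^2\sim L_0'$, we arrive at $\bignorm{T^{\pm_0}_{L_0,L_0'}\mathcal F u_{0'12}}^2 \lesssim N_0\Nmin^{012}L_0'L_j\norm{u_1}^2\norm{u_2}^2$. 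Choosing $L_j = \min(L_1,L_2)$, this matches the target estimate whenever $L_0'\le \max(L_1,L_2)$, since then $\Lmin^{0'12}\Lmed^{0'12}\sim L_0' L_j$. The dualized form of \eqref{M:10} similarly gives $\Nmin^{012}\Nmin^{12} L_1 L_2$, which by \eqref{C:43} is $\sim N_0 \Nmin^{012} L_1 L_2$ in the high-output regime $N_0\sim \Nmax^{012}$, so that case is also handled.

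The hard case is the remaining regime of low output ($N_0 \ll N_1\sim N_2$) combined with $L_0' > \max(L_1,L_2)$: here the bounds above yield only $N_0 N_1 L_1 L_2$, while the target is $N_0^2 L_1 L_2 = N_0\Nmin^{012}\Lmin^{0'12}\Lmed^{0'12}$, so a factor of $(N_0/N_1)^{1/2}$ must still be recovered. I would proceed in close analogy with the proof of case \eqref{G:10b} (section \ref{G:14:0}): Lemma \ref{D:Lemma1} forces the small-angle constraint $\theta_{12}\lesssim \gamma := (L_0'/N_1)^{1/2}$ on the bilinear interaction, and a Lemma \ref{B:Lemma5} decomposition of $(\pm_1\xi_1,\pm_2\xi_2)$ into pairs of $\gamma$-sectors then places each piece $u_j^{\gamma,\omega_j}$ in a tube of radius $\sim N_1\gamma$. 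Applying the null form estimate with tube restriction (Theorem \ref{G:Thm}) to the decomposed bilinear pieces paired against $\overline\Psi$, and summing over $(\omega_1,\omega_2)$ via the angular orthogonality \eqref{B:208}, produces precisely the missing $(N_0/N_1)^{1/2}$ and closes the bound.
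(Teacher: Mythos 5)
Your reduction to the time-zero trace is a clean and legitimate reformulation. Under \eqref{G:24}, your computation $\bignorm{T^{\pm_0}_{L_0,L_0'}\mathcal F u_{0'12}}^2\sim \bignorm{u_{0'12}(0,\cdot)}^2_{L^2_x}/L_0'$ is correct, and it makes transparent the mechanism behind the paper's Lemma \ref{D:Lemma5}: whatever bound on the $\tau_0'$-support of $\mathcal F u_{0'12}(\cdot,\xi_0)$ one has, feeding it into Cauchy--Schwarz gives the corresponding gain over $\fixednorm{u_{0'12}}$. Your cases $L_0'\le\max(L_1,L_2)$ and (high output, $L_0'>\max(L_1,L_2)$) then close correctly via duality and \eqref{M:14}, \eqref{M:10} together with $\norm{\Psi}^2\sim L_0'$; this is essentially the paper's use of Lemma \ref{D:Lemma3} followed by Theorem \ref{M:Thm}.

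However, your sketch for the hard case (low output, $L_0'>\max(L_1,L_2)$) has two genuine gaps. First, your angular scale $\gamma=(L_0'/N_1)^{1/2}$ comes from \eqref{D:30}, but the paper's \eqref{D:134:10} uses the sharper estimate \eqref{D:32}, valid since $\theta_{12}\ll 1$ forces $\pm_1=\pm_2$ in the low-output regime; this gives $\gamma\sim(N_0L_0'/(N_1N_2))^{1/2}$, smaller than yours by exactly the factor $(N_0/N_1)^{1/2}$ you need. With your $\gamma$, the hyperplane thickness after angular decomposition is $d'=\max(\Lmax^{12},\Nmax^{12}\gamma^2)=\max(\Lmax^{12},L_0')=L_0'$, so the $\tau_0'$-localization gives no gain whatsoever. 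Second, Theorem \ref{G:Thm} is not applicable here: it estimates $\mathfrak B_{\theta_{12}}(\cdot,\cdot)$, but your trilinear form $\iint u_1\overline{u_2}\,\overline\Psi$ has no null-form factor $\theta_{12}$ in it — you cannot simply insert one, since that makes the integrand smaller, not larger, and no lower bound on $\theta_{12}$ is available. The correct closing step, which fits naturally into your framework, is the hyperplane constraint \eqref{B:112}/\eqref{E:12:4}: with the correct $\gamma$, $\mathcal F u_{0'12}^{\gamma,\omega_1,\omega_2}$ is supported in $\tau_0'+\xi_0\cdot\omega_1=O(d')$ with $d'=\max(\Lmax^{12},N_0L_0'/N_1)$, so Cauchy--Schwarz in $\tau_0'$ yields $\norm{G^{\gamma,\omega_1,\omega_2}}\lesssim(d')^{1/2}\norm{u_{0'12}^{\gamma,\omega_1,\omega_2}}$, which combined with \eqref{M:10} (if $d'=N_0L_0'/N_1$) or \eqref{M:14} (if $d'=\Lmax^{12}$) and the angular summation \eqref{B:208} produces precisely $\norm{G}\lesssim(L_0'N_0^2L_1L_2)^{1/2}\norm{u_1}\norm{u_2}$. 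This is exactly the paper's argument, rephrased through your trace reduction, and it is what you should substitute for the appeal to Theorem \ref{G:Thm}.
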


The main point, comparing with \eqref{M:17} in Theorem \ref{M:Thm}, is that $\Nmin^{12}$ there is replaced by $\Nmin^{012}$ in \eqref{E:12}. Plugging the latter into \eqref{E:6}, and estimating $\norm{u_{043}}$ by the analogue of \eqref{M:17}, we get \eqref{C:80}.

\begin{proof}[Proof of Lemma \ref{E:Lemma1}]
If $\Lmax^{0'12} = \Lmax^{12}$, then \eqref{E:12} holds by Lemma \ref{D:Lemma3} and \eqref{M:14}, so we assume $\Lmax^{0'12} = L_0'$ for the rest of the proof.

Since $\theta_{12} \ll 1$, we have $\theta_{12} \lesssim \gamma$ with $\gamma$ as in \eqref{D:134:10}. Then by Lemma \ref{B:Lemma5}, the left hand side of \eqref{E:12} is dominated by the sum, with notation as in \eqref{G:14:6},
$$
  S = \sum_{\omega_1,\omega_2}
  \norm{T_{L_0,L_0'}^{\pm_0} \mathcal F u^{\gamma,\omega_1,\omega_2}_{0'12}},
$$
where the sum is over $\omega_1,\omega_2 \in \Omega(\gamma)$ with $\theta(\omega_1,\omega_2) \lesssim \gamma$. By \eqref{B:112},
\begin{equation}\label{E:12:2}
  \mathcal F u_1^{\gamma,\omega_1} \subset H_{\max(L_1,N_1\gamma^2)}(\omega_1),
  \qquad
  \mathcal F u_2^{\gamma,\omega_2} \subset H_{\max(L_2,N_2\gamma^2)}(\omega_1),
\end{equation}
where the latter relies on the assumption $\theta(\omega_1,\omega_2) \lesssim \gamma$. We conclude that
\begin{equation}\label{E:12:4}
  \supp \mathcal F u^{\gamma,\omega_1,\omega_2}_{0'12}
  \subset H_{d'}(\omega_1),
  \qquad \text{where}
  \qquad
  d' = \max\left(\Lmax^{12},\Nmax^{12}\gamma^2\right).
\end{equation}
Then by Lemma \ref{D:Lemma5},
\begin{equation}\label{E:12:5}
  S
  \lesssim
  \sum_{\omega_1,\omega_2}
  \left(\frac{d'}{L_0'}\right)^{1/2}
  \norm{u^{\gamma,\omega_1,\omega_2}_{0'12}}.
\end{equation}
If $d' = \Lmax^{12}$, use \eqref{M:14} and sum $\omega_1,\omega_2$ as in \eqref{B:208} to obtain
\begin{equation}\label{E:12:6}
  S
  \lesssim
  \left(\frac{\Lmax^{12}}{L_0'}\right)^{1/2}
  \left( N_0 \Nmin^{012} L_0' \Lmin^{12} \right)^{1/2}
  \norm{u_1} \norm{u_2},
\end{equation}
proving \eqref{E:12}. The other possibility is $d' = \Nmax^{12}\gamma^2 \sim  N_0L_0'/\Nmin^{12}$, where the estimate holds by \eqref{D:134:10}. Then by \eqref{E:12:5} and \eqref{M:10},
\begin{equation}\label{E:12:8}
  S
  \lesssim
  \left(\frac{N_0L_0'}{\Nmin^{12}L_0'}\right)^{1/2}
  \left( \Nmin^{012} \Nmin^{12} L_1 L_2 \right)^{1/2}
  \norm{u_1} \norm{u_2},
\end{equation}
completing the proof of Lemma \ref{E:Lemma1}.
\end{proof}

This concludes case \eqref{E:1a}. 

\subsection{Case \eqref{E:1c}, subcase \eqref{E:0a}}

This is covered by the proof for $L_0 \sim L_0'$, where it corresponds to case \eqref{G:4c}, subcase \eqref{G:10b}; see sections \ref{G:14:0}--\ref{G:20:7}.

\subsection{Case \eqref{E:1c}, subcase \eqref{E:0b}}\label{E:0b:0}

Again we use the argument in sections \ref{G:14:0}--\ref{G:20:7}, but with certain modifications. Observe that \eqref{G:14:2} holds. Now we repeat the argument leading to \eqref{G:14:8}, but instead of using Lemma \ref{D:Lemma3} and Theorem \ref{M:Thm} to estimate $T_{L_0,L_0'}^{\pm_0} \mathcal F u_{0'12}^{\gamma,\omega_1,\omega_2}$, we use Lemma \ref{E:Lemma1}. Then we get
\begin{equation}\label{G:14:8:2}
\begin{aligned}
  J_{\boldN,\boldL}^{\mathbf\Sigma}
  &\lesssim
  \sum_{\omega_1,\omega_2} \left( \frac{L_4}{\Nmin^{34}} \right)^{1/2}
  \Nmax^{12}\gamma \left(L_0L_3\right)^{1/2}
  \bignorm{T_{L_0,L_0'}^{\pm_0} \mathcal F u_{0'12}^{\gamma,\omega_1,\omega_2}}
  \norm{u_3} \norm{u_4}
  \\
  &\lesssim
  \left( \frac{L_4}{\Nmin^{34}} \right)^{1/2}
  \Nmax^{12} \left( \frac{N_0L_0'}{N_1N_2} \right)^{1/2} \left(L_0L_3
  \cdot N_0 \Nmin^{012} L_1L_2 \right)^{1/2} 
  \prod_{j=1}^4 \norm{u_j}
  \\
  &\lesssim
  \left( \frac{N_0}{\Nmin^{34}} N_0^2 L_0L_0'L_1L_2L_3L_4  \right)^{1/2}
  \prod_{j=1}^4 \norm{u_j},
\end{aligned}
\end{equation}
and the last line is exactly as in \eqref{G:14:8}.

Now we continue as in section \ref{G:14:0}. We are done if $N_0 \lesssim \Nmin^{34}$, or whenever we are able to gain an extra factor $(\Nmin^{34}/N_0)^{1/2}$ in \eqref{G:14:8:2}; the latter happens when $N_3 \ll N_0 \sim N_4$, in view of \eqref{G:14:10}.

Thus, we are left with $N_4 \ll N_0 \sim N_3$. Then we proceed as in section \ref{G:20:0}. We may assume \eqref{G:20:2} (otherwise $N_0 \lesssim L_0'$, and then \eqref{E:6} holds), hence Theorem \ref{J:Thm1} applies, so in \eqref{G:14:8:2} we can replace $\bignorm{T_{L_0,L_0'}^{\pm_0} \mathcal F u_{0'12}^{\gamma,\omega_1,\omega_2}}$ by
\begin{equation}\label{G:14:8:4}
  \sup_{I} \norm{T_{L_0,L_0'}^{\pm_0} \mathcal F
  \Proj_{\xi_0 \cdot \omega_1 \in I}  
  u_{0'12}^{\gamma,\omega_1,\omega_2}},
\end{equation}
where the supremum is over $I \subset \R$ with $\abs{I} = N_4$. By Theorem \ref{J:Thm3},
\begin{equation}\label{E:42}
  \sup_{I} \norm{\Proj_{\xi_0 \cdot \omega_1 \in I} u_{0'12}^{\gamma,\omega_1,\omega_2}}
  \lesssim
  \left(N_4 \Nmin^{12} L_1L_2 \right)^{1/2}
  \norm{u_1^{\gamma,\omega_1}} \norm{u_2^{\gamma,\omega_2}}.
\end{equation}
If we combine this with Lemma \ref{D:Lemma3}, we get
\begin{equation}\label{E:42:0}
  \text{l.h.s.\eqref{G:14:8:4}}
  \lesssim
  \left(N_4 \Nmin^{12} L_1L_2 \right)^{1/2}
  \norm{u_1^{\gamma,\omega_1}} \norm{u_2^{\gamma,\omega_2}},
\end{equation}
but this is not enough: It allows us to replace the combination $N_0\Nmin^{012}L_1L_2$ in the second line of \eqref{G:14:8:2} by $N_4\Nmin^{12}L_1L_2$, but what we need is $N_4\Nmin^{012}L_1L_2$. That is, we need
\begin{equation}\label{E:42:1}
  \text{l.h.s.\eqref{G:14:8:4}}
  \lesssim
  \left(N_4 \Nmin^{012} L_1L_2 \right)^{1/2}
  \norm{u_1^{\gamma,\omega_1}} \norm{u_2^{\gamma,\omega_2}}.
\end{equation}
If this holds, then we gain the necessary factor $(N_4/N_0)^{1/2}$ in \eqref{G:14:8:2}.

So let us prove \eqref{E:42:1}. We assume $N_0 \ll N_1 \sim N_2$, since otherwise \eqref{E:42:1} reduces to \eqref{E:42:0}. Recalling \eqref{E:12:4} from the proof of Lemma \ref{E:Lemma1}, we use Lemma \ref{D:Lemma5} to estimate the norm inside the supremum in \eqref{G:14:8:4}, and then we apply either \eqref{E:42} or the variation
\begin{equation}\label{E:42:2}
  \sup_I \norm{\Proj_{\xi_0 \cdot \omega_1 \in I}
  u_{0'12}^{\gamma,\omega_1,\omega_2}}
  \lesssim
  \bigl(N_4 \Nmin^{01} L_0'L_1 \bigr)^{1/2}
  \norm{u_1^{\gamma,\omega_1}} \norm{u_2^{\gamma,\omega_2}},
\end{equation}
which follows from Theorem \ref{J:Thm3} via duality, again using the fact that $\gamma \ll 1$. Specifically, if $d'=\Lmax^{12}$, we use \eqref{E:42:2}, whereas \eqref{E:42} is used if $d'=\Nmax^{12}\gamma^2$; cf.\ \eqref{E:12:6} and \eqref{E:12:8} in the proof of Lemma \ref{E:Lemma1}. Then \eqref{E:42:1} follows.

This concludes case \eqref{E:1c}, subcase \eqref{E:0b}.

\subsection{Case \eqref{E:1c}, subcase \eqref{E:0c}}\label{G:40:0:0}

The argument from section \ref{E:0b:0} does not work, since the $L_4$ in \eqref{G:14:8:2} becomes $L_0$, so there are two $L_0$'s; one of them is due to \eqref{E:4}, and is unavoidable. The other, unwanted factor comes from estimating the null form in \eqref{G:20:10} by Theorem \ref{G:Thm} or Theorem \ref{J:Thm1}, so we have to avoid using those theorems. Instead we shall use an extra decomposition of the spatial frequencies into cubes to gain better control. Then we use Theorems \ref{J:Thm3} and \ref{G:Thm2}, and also Lemma \ref{D:Lemma5}.

For the remainder of section \ref{E}, we change the notation from \eqref{D:134:10}, writing now
\begin{equation}\label{D:134:10:2}
  \theta_{12} \lesssim \gamma \equiv \biggl(\frac{N_0\Lmax^{0'12}}{N_1N_2}\biggr)^{1/2}.
\end{equation}

By \eqref{E:1c}, \eqref{E:2} and \eqref{E:2:2},
\begin{equation}\label{G:40:0:2}
  \phi
  \lesssim \min(\theta_{03},\theta_{04})
  \lesssim
  \left( \frac{L_0}{N_0} \right)^{1/2},
\end{equation}
hence
\begin{equation}\label{G:40:0}
  \abs{q_{1234}} \lesssim \phi\theta_{34}
  \lesssim
  \left( \frac{L_0}{N_0} \right)^{1/2}
  \theta_{34}.
\end{equation}
Therefore, applying Lemma \ref{B:Lemma5} to the pair $(\pm_1\xi_1,\pm_2\xi_2)$ and Lemma \ref{B:Lemma4} to the pair $(\pm_3\xi_3,\pm_4\xi_4)$, and recalling \eqref{D:134:10} and \eqref{E:4},
\begin{multline}\label{G:40:2}
  J_{\boldN,\boldL}^{\mathbf\Sigma}
  \lesssim
  \sum_{\omega_1,\omega_2}
  \sum_{0 < \gamma_{34} \lesssim \gamma'}
  \sum_{\omega_3,\omega_4}
  \left( \frac{L_0}{N_0} \right)^{1/2}
  \gamma_{34}
  \\
  \times
  \int T_{L_0,L_0'}^{\pm_0} \mathcal F u_{0'12}^{\gamma,\omega_1,\omega_2}(X_0)
  \cdot
  \mathcal F u_{043}^{\gamma_{34},\omega_4,\omega_3}(X_0) \, dX_0,
\end{multline}
where $\gamma'$ is defined as in \eqref{E:4}, $u_{0'12}^{\gamma,\omega_1,\omega_2}$ is defined as in \eqref{G:14:6}, and similarly
\begin{equation}\label{G:14:6:2}
  u^{\gamma_{34},\omega_4,\omega_3}_{043}
  =
  \Proj_{K^{\pm_0}_{N_0,L_0}} 
  \left( u_4^{\gamma_{34},\omega_4}
  \overline{ u_3^{\gamma_{34},\omega_3} } \right)
\end{equation}
The sums in \eqref{G:40:2} are over $\omega_1,\omega_2 \in \Omega(\gamma)$ with $\theta(\omega_1,\omega_2) \lesssim \gamma$, over dyadic $\gamma_{34}$, and over $\omega_3,\omega_4 \in \Omega(\gamma_{34})$ satisfying
\begin{equation}\label{G:14:6:4}
  3\gamma_{34} \le \theta(\omega_3,\omega_4) \le 12\gamma_{34}.
\end{equation}
Due to this separation, $\theta_{34} \sim \gamma_{34}$ in the bilinear interaction of \eqref{G:14:6:2}.

Recall that the spatial Fourier support of $u^{\gamma,\omega_1,\omega_2}_{0'12}$ is contained in a tube of radius
$$
  r \sim \Nmax^{12} \gamma
$$
around $\R\omega_1$, where $\gamma$ is given by \eqref{D:134:10:2}. Cover $\R$ by almost disjoint intervals $I$ of length $r$, and write
$$
  u_{0'12}^{\gamma,\omega_1,\omega_2}
  =
  \sum_I
  \Proj_{\xi_0 \cdot \omega_1 \in I} u_{0'12}^{\gamma,\omega_1,\omega_2},
$$
where the sum has cardinality $O(N_0/r)$. The spatial frequency $\xi_0$ of the summand is restricted to a cube
$$
  Q_0=Q_0(I)
$$
of side-length comparable to $r$. Let $\mathcal Q_0$ denote the cover of $\R^3$ by almost disjoint translates of $Q_0$, and restrict the spatial frequencies $\xi_1,\xi_2,\xi_3,\xi_4$ to cubes $Q_1,Q_2,Q_3,Q_4 \in \mathcal Q_0$, respectively. Since $\xi_0=\xi_1-\xi_2$, then once $Q_1$ has been chosen, the choice of $Q_2$ is restricted to the set
\begin{equation}\label{G:40:3}
  \left\{ Q_2 \in \mathcal Q_0 \colon Q_2 \cap (Q_1-Q_0) \neq \emptyset \right\},
\end{equation}
which has cardinality $O(1)$. This implies that, by the Cauchy-Schwarz inequality,
\begin{equation}\label{G:40:4}
  \sum_{Q_1,Q_2}
  \bignorm{\Proj_{Q_1}u_1^{\gamma,\omega_1}}
  \bignorm{\Proj_{Q_2}u_2^{\gamma,\omega_2}}
  \lesssim
  \bignorm{u_1^{\gamma,\omega_1}}
  \bignorm{u_2^{\gamma,\omega_2}},
\end{equation}
where the sum over $Q_2$ of course is restricted to the set \eqref{G:40:3}. The pair $(Q_3,Q_4)$ is similarly restricted, since $\xi_0=\xi_4-\xi_3$.

After this extra decomposition, \eqref{G:40:2} is replaced by
\begin{multline}\label{G:40:12}
  J_{\boldN,\boldL}^{\mathbf\Sigma}
  \lesssim
  \sum_{\omega_1,\omega_2}
  \sum_{0 < \gamma_{34} \lesssim \gamma'}
  \sum_{\omega_3,\omega_4}
  \sum_I
  \sum_{Q_1,Q_2}
  \sum_{Q_3,Q_4}
  \left( \frac{L_0}{N_0} \right)^{1/2} \gamma_{34}
  \\
  \times
  \int T_{L_0,L_0'}^{\pm_0} \mathcal F u_{0'12}^{\gamma,\omega_1,\omega_2;Q_1,Q_2}(X_0)
  \cdot
  \mathcal F u_{043}^{\gamma_{34},\omega_4,\omega_3;Q_4,Q_3}(X_0) \, dX_0,
\end{multline}
where
\begin{equation}\label{G:40:13}
\begin{aligned}
  u_{0'12}^{\gamma,\omega_1,\omega_2;Q_1,Q_2}
  &= \Proj_{K^{\pm_0}_{N_0,L_0'}}
  \left( \Proj_{Q_1} u_1^{\gamma,\omega_1}
  \overline{ \Proj_{Q_2} u_2^{\gamma,\omega_2} } \right),
  \\
  u_{043}^{\gamma_{34},\omega_4,\omega_3;Q_4,Q_3}
  &= \Proj_{K^{\pm_0}_{N_0,L_0}}
  \left( \Proj_{Q_4} u_4^{\gamma_{34},\omega_4}
  \overline{ \Proj_{Q_3} u_3^{\gamma_{34},\omega_3} } \right).
\end{aligned}
\end{equation}
The sum over $Q_2$ in \eqref{G:40:12} is restricted to the set \eqref{G:40:3} determined by $Q_1$ and $I$, and similarly for the pair $(Q_3,Q_4)$.

Consider the integral in \eqref{G:40:12} for a fixed choice of $\gamma_{34}$, the $\omega$'s, $I$ and the $Q$'s. We use the notation \eqref{C:35:1}--\eqref{C:35} for the bilinear interactions in this integral, so in particular $\xi_j \in Q_j$ and $\abs{\xi_j} \sim N_j$ for $j=1,2,3,4$, recalling \eqref{G:24:4:2}. Our plan is now to apply Lemma \ref{D:Lemma5}. By \eqref{E:12:4} we have
\begin{equation}\label{G:40:14}
  \tau_0' + \xi_0 \cdot \omega_1 = O(d'),
  \qquad
  \text{where}
  \qquad
  d'=\max\left(L_2,\Nmax^{12}\gamma^2\right).
\end{equation}
We claim that also
\begin{equation}\label{G:40:16}
  \tau_0 + \xi_0 \cdot \omega_3 = c + O(d),
  \qquad
  \text{where}
  \qquad
  d=\max\left(L_4,\frac{r^2}{\Nmin^{34}},r\gamma_{34}\right),
\end{equation}
and $c \in \R$ is a constant depending on $(Q_3,Q_4)$ and $(\omega_3,\omega_4)$.

Let us prove \eqref{G:40:16}. Denote by $\xi_j^*$ the center of the cube $Q_j$, so that
\begin{equation}\label{G:40:17:0}
\abs{\xi_j-\xi_j^*} \lesssim r.
\end{equation}
Let $\omega_j^* = \pm_j\xi_j^*/\abs{\xi_j^*}$. Replacing the side-length $r$ of the cubes by $2r$ if necessary, we may assume that $\omega_j^* \in \Gamma_{\gamma_{34}}(\omega_j)$ for $j=3,4$. Since $\theta(\omega_3,\omega_4) \lesssim \gamma_{34}$, we then have
\begin{equation}\label{G:40:17}
  \theta(\omega_j^*,\omega_3) \lesssim \gamma_{34} \qquad \text{for $j=3,4$}.
\end{equation}
Now write, for $j=3,4$,
\begin{equation}\label{G:40:18}
  \tau_j + \xi_j \cdot \omega_3 =
  \tau_j + \xi_j \cdot \omega_j^*
  + (\xi_j-\xi_j^*) \cdot (\omega_3 - \omega_j^*)
  + c_j,
\end{equation}
where
$$
  c_j = \xi_j^* \cdot (\omega_3 - \omega_j^*).
$$
Since $\xi_j \in Q_j$ and $\abs{\xi_j} \sim N_j$, we conclude from \eqref{B:112} that
\begin{equation}\label{G:40:20}
  \tau_j + \xi_j \cdot \omega_j^* = O\left(\max\left(L_j,\frac{r^2}{N_j}\right)\right)
  \qquad (j=3,4).
\end{equation}
By \eqref{G:40:17:0} and \eqref{G:40:17},
\begin{equation}\label{G:40:22}
  (\xi_j-\xi_j^*) \cdot (\omega_3 - \omega_j^*)
  = O(r\gamma_{34}).
\end{equation}
Now plug the estimates \eqref{G:40:20} and \eqref{G:40:22} into \eqref{G:40:18} for $j=3,4$, and subtract. Since $\tau_0=\tau_4-\tau_3$ and $\xi_0=\xi_4-\xi_3$, this proves \eqref{G:40:16}, with $c=c_4-c_3$.

In view of \eqref{G:40:14}, \eqref{G:40:16} and Lemma \ref{D:Lemma5}, we can dominate the integral in \eqref{G:40:12} by the product of
\begin{equation}\label{G:40:30:2}
  \left(\min\left(1,\frac{d'}{L_0'}\right) \right)^{1/2}
  \norm{u_{0'12}^{\gamma,\omega_1,\omega_2;Q_1,Q_2}}
\end{equation}
and
\begin{equation}\label{G:40:30:4}
  \left(\frac{d}{L_0} \right)^{1/2}
  \norm{u_{043}^{\gamma_{34},\omega_4,\omega_3;Q_4,Q_3}}.
\end{equation}

By Theorem \ref{J:Thm3},
\begin{equation}\label{G:40:32}
  \norm{u_{0'12}^{\gamma,\omega_1,\omega_2;Q_1,Q_2}}
  \lesssim C\,
  \bignorm{\Proj_{Q_1}u_1^{\gamma,\omega_1}}
  \bignorm{\Proj_{Q_2}u_2^{\gamma,\omega_2}}
\end{equation}
holds with
\begin{align}
  \label{G:40:32:4}
  C^2 &\sim r\Nmin^{01}L_0'L_1,
  \\
  \label{G:40:32:2}
  C^2 &\sim r\Nmin^{12}L_1L_2.
\end{align}
From the definitions of $d'$ and $\gamma$ (see \eqref{G:40:14} and \eqref{D:134:10}) and by the assumption \eqref{E:0c},
\begin{equation}\label{G:40:32:6}
  \frac{d'}{L_0'}
  \sim \max\left(\frac{L_2}{L_0'},\frac{N_0}{\Nmin^{12}}\right).
\end{equation}
If $N_0 \sim \Nmax^{12}$, we estimate \eqref{G:40:30:2} by l.h.s.\eqref{G:40:32} and use \eqref{G:40:32:2}. If, on the other hand, $N_0 \ll N_1 \sim N_2$, then we combine \eqref{G:40:32:6} with \eqref{G:40:32}, observing that the product of \eqref{G:40:32:6} with the minimum of \eqref{G:40:32:4} and \eqref{G:40:32:2} is dominated by $rN_0L_1L_2$. We conclude:
\begin{equation}\label{G:40:32:8}
  \eqref{G:40:30:2} \lesssim \left(r\Nmin^{012}L_1L_2\right)^{1/2}
  \bignorm{\Proj_{Q_1}u_1^{\gamma,\omega_1}}
  \bignorm{\Proj_{Q_2}u_2^{\gamma,\omega_2}}.
\end{equation}

We further claim that
\begin{equation}\label{G:40:34}
  \norm{u_{043}^{\gamma_{34},\omega_4,\omega_3;Q_4,Q_3}}
  \lesssim C\,
  \bignorm{\Proj_{Q_3}u_3^{\gamma_{34},\omega_3}}
  \bignorm{\Proj_{Q_4}u_4^{\gamma_{34},\omega_4}}
\end{equation}
holds with
\begin{align}
  \label{G:40:36}
  C^2
  &\sim
  r^3 L_3,
  \\
  \label{G:40:38}
  C^2
  &\sim
  r \Nmin^{34} L_3 L_4,
  \\
  \label{G:40:40}
  C^2
  &\sim
  \frac{r^2 L_3 L_4}{\gamma_{34}}.
\end{align}
In fact, \eqref{G:40:40} holds by Theorem \ref{G:Thm2}, in view of the separation assumption \eqref{G:14:6:4}; \eqref{G:40:38} holds by Theorem \ref{J:Thm3}, and \eqref{G:40:36} reduces to a trivial volume estimate (see the proof of Theorem \ref{G:Thm2} in \cite{Selberg:2008a}).

Now observe that $d$, defined by \eqref{G:40:16}, times the minimum of \eqref{G:40:36}--\eqref{G:40:40}, is comparable to $r^3 L_3 L_4$. Therefore,
\begin{equation}\label{G:40:42}
  \eqref{G:40:30:4}
  \lesssim
  \left(\frac{r^3L_3L_4}{L_0}\right)^{1/2}
  \bignorm{\Proj_{Q_3}u_3^{\gamma_{34},\omega_3}}
  \bignorm{\Proj_{Q_4}u_4^{\gamma_{34},\omega_4}}.
\end{equation}

Now estimate the integral in \eqref{G:40:12} by the product of \eqref{G:40:30:2} and \eqref{G:40:30:4}, and use \eqref{G:40:32:8} and \eqref{G:40:42}. The result is
\begin{equation}\label{G:40:42:2}
\begin{aligned}
  J_{\boldN,\boldL}^{\mathbf\Sigma}
  &\lesssim
  \sum_{\omega_1,\omega_2}
  \sum_{0 < \gamma_{34} \lesssim \gamma'}
  \sum_{\omega_3,\omega_4}
  \sum_I
  \left( \frac{L_0}{N_0} \right)^{1/2}
  \gamma_{34}
  \left(
  \frac{r^3L_3L_4}{L_0}
  \cdot
  r\Nmin^{012}L_1L_2 \right)^{1/2}
  \\
  &\quad
  \times
  \left(\sum_{Q_1,Q_2}
  \bignorm{\Proj_{Q_1}u_1^{\gamma,\omega_1}}
  \bignorm{\Proj_{Q_2}u_2^{\gamma,\omega_2}}\right)
  \left(\sum_{Q_3,Q_4}
  \bignorm{\Proj_{Q_3}u_3^{\gamma_{34},\omega_3}}
  \bignorm{\Proj_{Q_4}u_4^{\gamma_{34},\omega_4}}\right)
  \\
  &\lesssim \left( \sum_I \frac{r}{N_0} \right)
  \sum_{0 < \gamma_{34} \lesssim \gamma'} r\gamma_{34}
  \left( \frac{\Nmin^{012}}{N_0} \right)^{1/2}
  \left( N_0^2 L_1L_2 L_3L_4 \right)^{1/2}
  \\
  &\quad
  \times
  \left(\sum_{\omega_1,\omega_2}
  \bignorm{u_1^{\gamma,\omega_1}}
  \bignorm{u_2^{\gamma,\omega_2}} \right)
  \left( \sum_{\omega_3,\omega_4}
  \bignorm{u_3^{\gamma_{34},\omega_3}}
  \bignorm{u_4^{\gamma_{34},\omega_4}} \right)
  \\
  &\lesssim
  \frac{\Nmax^{12}\gamma\gamma'}{(L_0L_0')^{1/2}}
  \left( \frac{\Nmin^{012}}{N_0} \right)^{1/2}
  \left( N_0^2 L_0L_0'L_1L_2 L_3L_4 \right)^{1/2}
  \norm{u_1}\norm{u_2}\norm{u_3}\norm{u_4},
\end{aligned}
\end{equation}
where to get the second inequality we summed the $Q$'s using \eqref{G:40:4} and its analogue for $(Q_3,Q_4)$. In the final step we used the definition $r \sim \Nmax^{12}\gamma$, we summed $I$ using the fact that the index set has cardinality $O(N_0/r)$, we summed the $\omega$'s as in \eqref{B:208}, and finally we used the fact that
\begin{equation}\label{G:40:50}
  \sum_{0 < \gamma_{34} \lesssim \gamma'} \gamma_{34}
  \sim \gamma',
\end{equation}
where the sum is over dyadic $\gamma_{34}$, of course.

Note that the above implies \eqref{C:80} if the expression
\begin{equation}\label{G:40:52}
  A = \frac{(\Nmax^{12}\gamma\gamma')^2\Nmin^{012}}{N_0L_0L_0'}
\end{equation}
is $O(1$). In view of \eqref{D:134:10:2}, \eqref{E:4} and \eqref{E:0c},
\begin{equation}\label{G:40:54}
  A
  \lesssim
  \frac{(\Nmax^{12})^2\Nmin^{012}}{N_0L_0L_0'}
  \cdot \frac{N_0L_0'}{N_1N_2}
  \min\left(1,\frac{L_0}{\Nmin^{34}}\right).
\end{equation}
If we use the second factor in the last minimum, we get
\begin{equation}\label{G:40:56}
  A 
  \lesssim
  \frac{\Nmax^{12}\Nmin^{012}}{\Nmin^{12}\Nmin^{34}}
  \lesssim \frac{N_0}{\Nmin^{34}},
\end{equation}
where we used \eqref{C:43}. This proves \eqref{G:40:52} except when
$$
  \Nmin^{34} \ll N_0,
$$
which we now assume. If $\pm_0\neq\pm_{43}$, then $N_0 \lesssim L_0$ by Lemma \ref{F:Lemma2}, so we can estimate the first factor in the minimum in \eqref{G:40:54} by $1 \lesssim L_0/N_0$, thereby gaining a factor $\Nmin^{34}/N_0$ compared to \eqref{G:40:56}. If, on the other hand, $\pm_0=\pm_{43}$, then by Lemma \ref{F:Lemma1} and \eqref{E:4},
$$
  \min(\theta_{03},\theta_{04}) \lesssim \frac{\Nmin^{34}}{N_0} \theta_{34}
  \lesssim
  \frac{\Nmin^{34}}{N_0} \left( \frac{L_0}{\Nmin^{34}} \right)^{1/2}
  = \left(\frac{\Nmin^{34}}{N_0} \right) \left( \frac{L_0}{N_0} \right)^{1/2},
$$
which means that compared to \eqref{G:40:0:2} we gain a factor $(\Nmin^{34}/N_0)^{1/2}$, which comes up squared in \eqref{G:40:52}.

This completes case \eqref{E:1c}, subcase \eqref{E:0c}.

\subsection{Case \eqref{E:1c}, subcase \eqref{E:0d}}\label{G:40:51}

This adds another layer of difficulty compared to the previous section, for a certain asymmetric interaction. As far as possible, however, we repeat the preceding argument.

The only difference from the previous case is that now $L_2 > L_0'$, instead of $L_2 \le L_0'$. This difference only shows up in the expression \eqref{D:134:10} for $\gamma$, however, and this expression is not used explicitly in the previous section until the estimate \eqref{G:40:32:6}. But in the present case, $d'/L_0' > 1$, hence \eqref{G:40:30:2} is just equal to the left hand side of \eqref{G:40:32}, so instead of \eqref{G:40:32:8} we use \eqref{G:40:32} with constant $C$ as in \eqref{G:40:32:4} (using \eqref{G:40:32:2} will not work now). Comparing \eqref{G:40:32:4} with \eqref{G:40:32:8}, we see that there will only be a problem if $N_2 \ll N_0 \sim N_1$.

To be precise, instead of \eqref{G:40:52} we will now have
\begin{equation}\label{G:40:52:2}
  A = \frac{(\Nmax^{12}\gamma\gamma')^2\Nmin^{12}}{N_0L_0L_2},
\end{equation}
leading to
\begin{equation}\label{G:40:54:2}
  A
  \lesssim
  \frac{(\Nmax^{12})^2\Nmin^{12}}{N_0L_0L_2}
  \cdot \frac{N_0L_2}{N_1N_2}
  \cdot \min\left(1,\frac{L_0}{\Nmin^{34}}\right)
  = \frac{\Nmin^{12}}{\Nmin^{012}} \times \text{r.h.s.}\eqref{G:40:54},
\end{equation}
so we are done except in the case
$$
  N_2 \ll N_0 \sim N_1,
$$
which we now assume. Then we must somehow gain a factor $N_2/N_0$ in \eqref{G:40:52:2}. We use the same idea as in section \ref{G:20:7}. We may assume
$$
  N_2 \gg 1,
$$
since otherwise \eqref{G:20:8:2} applies. We may further assume
$$
  \pm_0=\pm_{012},
$$
since otherwise \eqref{E:6} applies.

Then \eqref{G:20:8} holds, and we use this to make an extra angular decomposition in the analysis of the previous section, for the pair $(\pm_0\xi_0,\pm_1\xi_1)$. In view of \eqref{G:20:16}, the effect of this extra decomposition is that we can replace $\Proj_{Q_1} u_1^{\gamma,\omega_1}$ and $\Proj_{Q_2} u_2^{\gamma,\omega_2}$ in \eqref{G:40:13} by, respectively,
\begin{equation}\label{G:40:13:2}
  \Proj_{Q_1} u_1^{\gamma,\omega_1;\alpha,\omega_1'},
  \qquad
  \Proj_{H_d(\omega_1')} \Proj_{Q_2} u_2^{\gamma,\omega_2},
\end{equation}
where $u_1^{\gamma,\omega_1;\alpha,\omega_1'}$ is defined by \eqref{G:40:14} and $d$ is given by \eqref{G:20:16}. Here $\omega_1' \in \Omega(\alpha)$. There is also a vector $\omega_0' \in \Omega(\alpha)$, but since $\theta(\omega_0',\omega_1') \lesssim \alpha$ we know that only $O(1)$ $\omega_0'$'s can interact with a given $\omega_1'$, hence summing $\omega_0'$ is not a problem.

A key observation is that the spatial output $\xi_0$ is now restricted to a tube of radius
$$
  r' \sim N_0\alpha \sim N_2\gamma
$$
around $\R\omega_0'$, and the relation between this and the radius $r \sim N_0 \gamma$ used in the previous section is
$$
  \frac{r'}{r} \sim \frac{N_2}{N_0}.
$$

If we now repeat the decomposition into cubes as in the previous section, but now with $r$ replaced by $r'$, then apply \eqref{G:40:32} with $C$ as in \eqref{G:40:32:4} but with $r$ replaced by $r'$ and with the substitutions \eqref{G:40:13:2}, and we apply also \eqref{G:40:34} with $r$ replaced by $r'$, then we get
\begin{equation}\label{G:40:42:4}
\begin{aligned}
  J_{\boldN,\boldL}^{\mathbf\Sigma}
  &\lesssim
  \sum_{\omega_1,\omega_2}
  \sum_{0 < \gamma_{34} \lesssim \gamma'}
  \sum_{\omega_3,\omega_4}
  \sum_{\omega_1'}
  \sum_I
  \left( \frac{L_0}{N_0} \right)^{1/2}
  \gamma_{34}
  \left( \frac{(r')^3L_3L_4}{L_0}
  \cdot
  r'N_0L_0'L_1 \right)^{1/2}
  \\
  &\quad
  \times
  \bignorm{u_1^{\gamma,\omega_1;\alpha,\omega_1'}}
  \bignorm{\Proj_{H_d(\omega_1')}u_2^{\gamma,\omega_2}}
  \bignorm{u_3^{\gamma_{34},\omega_3}}
  \bignorm{u_4^{\gamma_{34},\omega_4}}
  \\
  &\lesssim \left( \sum_I \frac{r'}{N_0} \right)
  \sum_{0 < \gamma_{34} \lesssim \gamma'}
  \frac{r'\gamma_{34}}{(L_0L_2)^{1/2}}
  \left( N_0^2 L_0 L_0' L_1 L_2 L_3 L_4 \right)^{1/2}
  \\
  &\quad
  \times
  \left(\sum_{\omega_1,\omega_2}
  \sum_{\omega_1'}
  \bignorm{u_1^{\gamma,\omega_1;\alpha,\omega_1'}}
  \bignorm{\Proj_{H_d(\omega_1')}u_2^{\gamma,\omega_2}} \right)
  \left( \sum_{\omega_3,\omega_4}
  \bignorm{u_3^{\gamma_{34},\omega_3}}
  \bignorm{u_4^{\gamma_{34},\omega_4}} \right)
  \\
  &\lesssim
  \frac{r'\gamma'}{(L_0L_2)^{1/2}}
  \left( N_0^2 L_0L_0'L_1L_2 L_3L_4 \right)^{1/2}
  \sqrt{\sup_{\omega_1 \in \mathbb S^2} B(\omega_1)} 
  \norm{u_1} \norm{u_2} \norm{u_3} \norm{u_4},
\end{aligned}
\end{equation}
where $B(\omega_1)$ is defined by \eqref{G:20:20}. So now instead of \eqref{G:40:52} we have
$$
  A = \frac{(r'\gamma')^2}{L_0L_2} \sup_{\omega_1 \in \mathbb S^2} B(\omega_1),
$$
and \eqref{G:40:54} is replaced by
\begin{align*}
  A
  &\lesssim
  \frac{N_2^2}{L_0L_2}
  \cdot \frac{N_0L_2}{N_1N_2}
  \min\left(1,\frac{L_0}{\Nmin^{34}}\right) \sup_{\omega_1 \in \mathbb S^2} B(\omega_1)
  \\
  &\lesssim
  \frac{N_2}{L_0}
  \min\left(1,\frac{L_0}{\Nmin^{34}}\right) \sup_{\omega_1 \in \mathbb S^2} B(\omega_1).
\end{align*}

When \eqref{G:20:22} holds we are done, since then we get
\begin{equation}\label{G:40:56:2}
  A \lesssim \frac{N_0}{L_0} \min\left(1,\frac{L_0}{\Nmin^{34}}\right).
\end{equation}
and by the same argument as at the end of the previous section we also know how to deal with the case $\Nmin^{34} \ll N_0$.

If, on the other hand, \eqref{G:20:22} does not hold, then as shown in section \ref{G:20:7} we have instead \eqref{G:20:24}. But to compensate we can use the fact that \eqref{G:40:32} holds with
$$
  C^2 \sim r' (N_2\gamma)^2 \Lmin^{0'1},
$$
as follows from \eqref{G:20:26}. Then as observed in section \ref{G:20:7}, the net effect is the same, hence \eqref{G:40:56:2} holds.

This completes case \eqref{E:1c}, subcase \eqref{E:0d}

\subsection{Case \eqref{E:1e}, subcases \eqref{E:0a} and \eqref{E:0b}}

Then by \eqref{G:8b}, \eqref{E:2} and \eqref{E:2:2},
$$
  \abs{q_{1234}} \lesssim \phi^2
  \lesssim \min(\theta_{03},\theta_{04})^2
  \lesssim \min(\theta_{03},\theta_{04}) \left( \frac{L_4}{N_0} \right)^{1/2},
$$
hence we can proceed as in section \ref{G:14:0}, but recalling also that we have Lemma \ref{E:Lemma1} at our disposal. The result is that we can dominate $J_{\boldN,\boldL}^{\mathbf\Sigma}$ by the last line of \eqref{G:14:8}, but without the factor $N_0/\Nmin^{34}$. Thus, \eqref{C:80} holds.

\subsection{Case \eqref{E:1e}, subcase \eqref{E:0c}}

Here we would like to follow as closely as possible the argument for case \eqref{E:1c}, subcase \eqref{E:0c}, from section \ref{G:40:0:0}.

Since $\theta_{12},\theta_{34} \ll 1$, we have \eqref{D:134:10:2} and similarly
\begin{equation}\label{D:134:10:4}
  \theta_{34} \lesssim \gamma' \equiv \left(\frac{N_0L_0}{N_3N_4}\right)^{1/2},
\end{equation}
which replaces \eqref{E:4}.

We still have \eqref{G:40:0:2}, but  \eqref{G:40:0} is replaced by
\begin{equation}\label{G:50:4}
  \abs{q_{1234}} \lesssim \phi^2,
\end{equation}
hence the factor $\gamma_{34}$ in \eqref{G:40:12} is replaced by the upper bound for $\phi$ in \eqref{G:40:0:2}.

Since there is no $\gamma_{34}$, it may seem that we have a problem with the estimate \eqref{G:40:34} with $C$ as in \eqref{G:40:40}, since this is a null form estimate which requires that we have at least a square root of $\gamma_{34}$ (the dyadic size of $\theta_{34}$).

But the combination \eqref{G:40:34}, \eqref{G:40:40} is only used when we pick up the third factor $r\gamma_{34}$ in the maximum defining $d$ in \eqref{G:40:16}, so we are still able to use \eqref{G:40:40}.

Proceeding as in section \ref{G:40:0:0}, we then get \eqref{G:40:42:2} with the following  modifications: The factors $\gamma_{34}$ in the first and third lines are replaced by the upper bound in \eqref{G:40:0:2}, and instead of the factor $\gamma'$ in the last line, which comes from the sum \eqref{G:40:50}, we now have
\begin{equation}\label{G:50:6}
  \left(\frac{L_0}{N_0}\right)^{1/2} \sum_{0 < \gamma_{34} \lesssim \gamma'} 1.
\end{equation}
Of course the sum diverges, unless we can further restrict the range of the dyadic number $\gamma_{34}$.

The separation assumption \eqref{G:14:6:4} is only needed when we apply the null form estimate \eqref{G:40:40}, i.e., when the factor $r\gamma_{34}$ dominates in the definition of $d$ in \eqref{G:40:16}; then in particular,
\begin{equation}\label{D:134:10:6}
  \gamma_{34} \gtrsim \frac{r}{\Nmin^{34}} \sim \frac{\Nmax^{12}}{\Nmin^{34}}\gamma
  = \frac{\Nmax^{12}}{\Nmin^{34}} \left( \frac{N_0L_0'}{N_1N_2} \right)^{1/2}.
\end{equation}
On the other hand, we also have the upper bound \eqref{D:134:10:4} for $\gamma_{34}$. The cardinality of the set of dyadic numbers $\gamma_{34}$ satisfying both \eqref{D:134:10:4} and \eqref{D:134:10:6} is comparable to
\begin{equation}\label{G:50:10}
  \log
  \left( \frac{\gamma'}{r/\Nmin^{34}} \right)
  \sim
  \log \left( \frac{(N_1N_2)^{1/2}}{\Nmax^{12}}
  \cdot \frac{\Nmin^{34}}{(N_3N_4)^{1/2}} \left(\frac{L_0}{L_0'}\right)^{1/2}
  \right)
  \lesssim \log L_0,
\end{equation}
so the corresponding part of the sum in \eqref{G:50:6} is $O(\log L_0)$.

It then remains to consider
$$
  \theta_{34} \ll \frac{r}{\Nmin^{34}},
$$
but then we do not need the separation assumption \eqref{G:14:6:4}, so here we can avoid a summation over $\gamma_{34}$ altogether by using Lemma \ref{B:Lemma5} instead of Lemma \ref{B:Lemma4}.

\subsection{Case \eqref{E:1e}, subcase \eqref{E:0d}}

This follows by the argument from section \eqref{G:40:51} with the same modifications as in the previous section. Now $L_0'$ in \eqref{G:50:10} is replaced by $L_2$, but this does not change the final estimate in \eqref{G:50:10} (recall that all the $L$'s are greater than or equal to one).

This completes the proof of Theorem \ref{C:Thm2}.

\section{Summation of the dyadic pieces}\label{Y}

By summing the dyadic estimates from Theorem \ref{C:Thm2}, we prove that \eqref{C:12} holds for any $s > 0$ and all sufficiently small $\varepsilon > 0$ (depending on $s$). Split the integral \eqref{C:26} into two parts:
$$
  J^{\boldsymbol\Sigma}
  =
  J^{\boldsymbol\Sigma}_{\abs{\xi_0} \ge 1}
  +
  J^{\boldsymbol\Sigma}_{\abs{\xi_0} < 1},
$$
by restricting to the regions where $\abs{\xi_0} \ge 1$ and $\abs{\xi_0} < 1$, respectively.

\subsection{The high frequency part}

Recall that \eqref{C:56} holds for $J^{\boldsymbol\Sigma}_{\abs{\xi_0} \ge 1}$. Now we combine the estimate from Theorem \ref{C:Thm2} with the trivial estimate
\begin{equation}\label{Y:2}
  J_{\boldN,\boldL}^{\mathbf\Sigma}
  \lesssim \left( \left(\Nmin^{012} \Nmin^{034}\right)^3
  \Lmin^{0'12} \Lmin^{034} \right)^{1/2}
  \prod_{j=1}^4\norm{u_j},
\end{equation}
which is immediate from \eqref{M:18} and Lemma \ref{D:Lemma3}. Taking \eqref{Y:2} to the power $8\varepsilon$ and \eqref{C:80} to the power $1-8\varepsilon$, we get
$$
  J_{\boldN,\boldL}^{\mathbf\Sigma}
  \lesssim
  \left( \left(\Nmin^{012} \Nmin^{034}\right)^3
  \Lmin^{0'12} \Lmin^{034} \right)^{4\varepsilon}
  \left( N_0^2 L_0L_0'L_1L_2L_3L_4 \right)^{1/2-4\varepsilon}
  \log\angles{L_0}
  \prod_{j=1}^4\norm{u_j}.
$$
Estimating $\log\angles{L_0} \lesssim L_0^{\varepsilon}$ and $(\Lmin^{0'12} \Lmin^{034})^{4\varepsilon} \lesssim (L_1L_3)^{4\varepsilon}$, inserting the above into \eqref{C:56}, and recalling the notation \eqref{B:200}, we see that is enough to prove
\begin{equation}\label{Y:10}
  S \lesssim \norm{F_1}\norm{F_2}\norm{F_3}\norm{F_4},
\end{equation}
where
$$
  S = \sum_{\boldN,\boldL} \frac{N_4^s  \bigl( \Nmin^{012} \Nmin^{034} \bigr)^{12\varepsilon}}{N_0^{8\varepsilon}(N_1N_2N_3)^s (L_0L_0'L_1 L_2 L_3L_4)^{\varepsilon}}
  \prod_{j=1}^4 \bignorm{\chi_{K^{\pm_j}_{N_j,L_j}} F_j}.
$$
The sum over $\boldN$ is restricted by the condition \eqref{C:41} and its counterpart for the inded $034$. Recall that all the $N$'s and $L$'s are greater than or equal to one.

Summing $\boldL$ is trivial:
$$
  \sum_{\boldL} \frac{1}{(L_0L_0'L_1 L_2 L_3L_4)^{\varepsilon}}
  \prod_{j=1}^4 \Bignorm{\chi_{K^{\pm_j}_{N_j,L_j}} F_j}
  \le C
  \prod_{j=1}^4 \norm{\chi_{\angles{\xi_j} \sim N_j} F_j},
$$
where $C = \sum_{\boldL} (L_0L_0'L_1 L_2 L_3L_4)^{-\varepsilon} < \infty$, so it only remains to prove \eqref{Y:10} for the reduced sum
\begin{equation}\label{Y:20}
  S' = \sum_{\boldN} \frac{N_4^s  \bigl( \Nmin^{012} \Nmin^{034} \bigr)^{12\varepsilon}}{N_0^{8\varepsilon}(N_1N_2N_3)^s}
  \prod_{j=1}^4 \norm{\chi_{\angles{\xi_j} \sim N_j} F_j}.
\end{equation}
Since  $(\Nmin^{012} \Nmin^{034})^{12\varepsilon} \le N_0^{24\varepsilon} \lesssim (\Nmax^{12})^{24\varepsilon}$,
\begin{equation}\label{Y:22}
  S' \lesssim \sum_{\boldN} \frac{N_4^s}{N_0^{8\varepsilon}(N_1N_2)^{s-24\varepsilon} N_3^s}
  \prod_{j=1}^4 \norm{\chi_{\angles{\xi_j} \sim N_j} F_j}.
\end{equation}
To ensure that $s-25\varepsilon > 0$, we choose $\varepsilon > 0$ so small that $25\varepsilon \le s$.

We now split $S' = S_1 + S_2 + S_3$, corresponding to the cases $N_4 \lesssim N_0 \sim N_3$,  $N_0 \ll N_3 \sim N_4$ and $N_3 \ll N_0 \sim N_4$, respectively.

\subsubsection{The case $N_4 \lesssim N_0 \sim N_3$} Since $\sum_{N_4 \lesssim N_0} N_4^s \sim N_0^s \sim N_3^s$,
$$
  S_1 \lesssim \sum_{N_0,N_1,N_2,N_3} \frac{1}{N_0^{7\varepsilon}(N_1N_2)^{s-24\varepsilon} N_3^\varepsilon}
  \prod_{j=1}^4 \norm{\chi_{\angles{\xi_j} \sim N_j} F_j},
$$
and this is trivially bounded by right hand side of \eqref{Y:10}.

\subsubsection{The case $N_0 \ll N_3 \sim N_4$} Then
$$
  S_2 \lesssim \sum_{\boldN} \frac{\chi_{N_3 \sim N_4}}{N_0^{8\varepsilon}(N_1N_2)^{s-24\varepsilon}}
  \prod_{j=1}^4 \norm{\chi_{\angles{\xi_j} \sim N_j} F_j}.
$$
Here $N_0,N_1,N_2$ sum outright, whereas $N_3 \sim N_4$ can be summed using the Cauchy-Schwarz inequality:
$$
  \sum_{N_3 \sim N_4 \ge 1} \norm{\chi_{\angles{\xi_3} \sim N_3} F_3} \norm{\chi_{\angles{\xi_4} \sim N_4} F_4}
  \lesssim \norm{F_3}\norm{F_4}.
$$

\subsubsection{The case $N_3 \ll N_0 \sim N_4$} Then $N_4 \lesssim \Nmax^{12}$. Now \eqref{Y:22} is too crude, but from \eqref{Y:20} we see that we can reduce to the sum
\begin{align*}
  S_3 &= \sum_{\boldN} \frac{N_4^s \chi_{N_0 \sim N_4 \lesssim \Nmax^{12}}}{N_0^{8\varepsilon}(\Nmin^{12})^{s-12\varepsilon}(\Nmax^{12})^s N_3^{s-12\varepsilon}}
  \prod_{j=1}^4 \norm{\chi_{\angles{\xi_j} \sim N_j} F_j}
  \\
  &\lesssim \sum_{\boldN} \frac{\chi_{N_0 \sim N_4}}{N_0^{7\varepsilon}(\Nmin^{12})^{s-12\varepsilon}(\Nmax^{12})^{\varepsilon} N_3^{s-12\varepsilon}}
  \prod_{j=1}^4 \norm{\chi_{\angles{\xi_j} \sim N_j} F_j},
\end{align*}
and this is trivial to sum.

\subsection{The low frequency part}\label{R}

Here we prove \eqref{C:12} for $J^{\boldsymbol\Sigma}_{\abs{\xi_0} < 1}$, without any dyadic decomposition. For this, we need the estimate, for $f \in \mathcal S(\R^3)$,
\begin{equation}\label{R:2}
  \norm{\Proj_{\abs{\xi} < 1} f}_{L^\infty} \le \abs{B(0,1)}^{1/2} \norm{f}_{L^2},
\end{equation}
or rather its dual,
\begin{equation}\label{R:4}
  \norm{\Proj_{\abs{\xi} < 1} f}_{L^2} \le \abs{B(0,1)}^{1/2} \norm{f}_{L^1}.
\end{equation}
Here $B(0,1)$ denotes the unit ball $\{ \xi \in \R^3 \colon \abs{\xi} < 1 \}$. Note that \eqref{R:2} follows from the Riemann-Lebesgue lemma and the Cauchy-Schwarz inequality.

We also need (this follows from the triangle inequality in Fourier space)
\begin{equation}\label{R:5}
  \norm{\Proj_{\abs{\xi} < 1} (fg)} \le C_s \norm{P_{\abs{\xi} < 1}(\angles{D}^s \fourierabs{f} \cdot \angles{D}^{-s} \fourierabs{g})},
\end{equation}
where we use the notation $\fourierabs{f} = \mathcal F_x^{-1}\abs{\widehat f\,}$. Furthermore, we need the crude estimate
\begin{equation}\label{R:6}
  \bignorm{\rho{\square}^{-1}F} \lesssim \norm{F}, 
\end{equation}
which follows from a cut-off estimate proved in \cite{Klainerman:1995b}, and we need
\begin{equation}\label{R:8}
  \norm{F}_{L_t^pL_x^2} \le C_{p,b} \norm{F}_{X_\pm^{0,(1-2/p)b}} \qquad (2 \le p \le \infty, \; b > 1/2).
\end{equation} 
The latter is trivial for $p=2$, so by interpolation it suffices to prove it for $p=\infty$, but then by Minkowski's integral inequality, the Riemann-Lebesgue lemma and Plancherel's theorem, the left hand side is bounded by $\fixednorm{\widetilde F(\tau,\xi)}_{L_\xi^2L_\tau^1}$. Inserting $\angles{\tau\pm\abs{\xi}}^b\angles{\tau\pm\abs{\xi}}^{-b}$ and applying the Cauchy-Schwarz inequality in $\tau$, one easily obtains the desired estimate.

Now we estimate, for any $b > 1/2$,
\begin{align*}
  J^{\boldsymbol\Sigma}_{\abs{\xi_0} < 1} &\le \bignorm{\rho\square^{-1} \Proj_{\abs{\xi} < 1}\innerprod{\boldsymbol\alpha^\mu\mathbf\Pi_{\pm_1}\psi_1}{\mathbf\Pi_{\pm_2}\psi_2}} \norm{\Proj_{\abs{\xi} < 1} \innerprod{\boldsymbol\alpha_\mu\mathbf\Pi_{\pm_3}\psi_3}{\mathbf\Pi_{\pm_4}\psi_4}}
  \\
  &\lesssim \norm{\Proj_{\abs{\xi} < 1}\innerprod{\boldsymbol\alpha^\mu\mathbf\Pi_{\pm_1}\psi_1}{\mathbf\Pi_{\pm_2}\psi_2}} \norm{\Proj_{\abs{\xi} < 1} \innerprod{\boldsymbol\alpha_\mu\mathbf\Pi_{\pm_3}\psi_3}{\mathbf\Pi_{\pm_4}\psi_4}}
  \\
  &\le C_s \norm{\Proj_{\abs{\xi} < 1}(\fourierabs{\psi_1}\fourierabs{\psi_2})} \norm{\Proj_{\abs{\xi} < 1} (\angles{D}^s\fourierabs{\psi_3}\cdot\angles{D}^{-s}\fourierabs{\psi_4})}
  \\
  &\le C_s \norm{\fourierabs{\psi_1}\fourierabs{\psi_2}}_{L_t^2L_x^1} \norm{\angles{D}^s\fourierabs{\psi_3}\cdot\angles{D}^{-s}\fourierabs{\psi_4}}_{L_t^2L_x^1}
  \\
  &\le C_s \norm{\psi_1}_{L_t^4L_x^2} \norm{\psi_2}_{L_t^4L_x^2} \norm{\angles{D}^s\psi_3}_{L_t^4L_x^2} \norm{\angles{D}^{-s}\psi_4}_{L_t^4L_x^2}
  \\
  &\le C_{s,b} \norm{\psi_1}_{X_{\pm_1}^{0,b/2}} \norm{\psi_2}_{X_{\pm_2}^{0,b/2}} \norm{\psi_3}_{X_{\pm_3}^{s,b/2}} \norm{\psi_4}_{X_{\pm_4}^{-s,b/2}},
\end{align*}
where to get the second inequality we used \eqref{R:6}, and then we used \eqref{R:5}, \eqref{R:4}, H\"older's inequality and \eqref{R:8}. Finally, if we write $b=1/2+\varepsilon$, where $\varepsilon > 0$, then we see that $b/2 \le 1/2-2\varepsilon$ for all $\varepsilon \le 1/10$, hence we have proved \eqref{C:12} for the low frequency part. Notice that we did not need the null structure.

This concludes the proof of \eqref{C:12}.

\section{Proof of the trilinear estimate}\label{S}

Here we prove \eqref{C:10}. Recall that $A_\mu^{\mathrm{hom.}}$ is the solution of \eqref{A:62}, the data being determined by \eqref{A:42}, \eqref{A:48}, with regularity as in \eqref{A:102}. Thus,
$$
  A_0^{\mathrm{hom.}} = 0,
$$
whereas $A_j^{\mathrm{hom.}}$ for $j=1,2,3$ splits in the usual way:
$$
  A_j^{\mathrm{hom.}} = A_{j,+}^{\mathrm{hom.}} + A_{j,-}^{\mathrm{hom.}},
$$
where
$$
  \widetilde{A_{j,\pm_0}^{\mathrm{hom.}}}(X_0)
  =
  \delta(\tau_0\pm_0\abs{\xi_0}) 
  \frac{g_j^{\pm_0}(\xi_0)}{\abs{\xi_0}\angles{\xi_0}^{s-1/2}}
  \qquad (X_0 = (\tau_0,\xi_0)),
$$
and $g_j^+,g_j^- \in L^2(\R^3)$ are defined by
\begin{equation}\label{S:4}
  g_j^{\pm_0}(\xi_0)
  =
  \abs{\xi_0}\angles{\xi_0}^{s-1/2}
  \left( \frac{\widehat{a_j}(\xi_0)}{2} \pm_0 \frac{\widehat{\dot a_j}(\xi_0)}{2i\abs{\xi_0}} \right),
\end{equation}
hence
\begin{equation}\label{S:6}
  \norm{g^\pm} \lesssim \mathcal I_0,
\end{equation}
where $\mathcal I_0$ is as in \eqref{C:3}. By \eqref{A:32}--\eqref{A:48}, $\nabla \cdot \mathbf a = 0$ and $\nabla \cdot \dot{\mathbf a} = - \abs{\psi_0}^2$, hence
\begin{equation}\label{S:10}
  \xi_0^j g_j^{\pm_0}(\xi_0) \simeq \angles{\xi_0}^{s-1/2} \widehat{\abs{\psi_0}^2}(\xi_0),
\end{equation}
where we implicitly sum over $j=1,2,3$ on the left hand side.

Now write $\boldsymbol\Sigma = (\pm_0,\pm_1,\pm_2)$, and let $I^{\boldsymbol\Sigma}$ be defined like $I^{\pm_1,\pm_2}$ in \eqref{C:14}, except that $A_j^{\mathrm{hom.}}$ is replaced by $A_{j,\pm_0}^{\mathrm{hom.}}$. By Plancherel's formula,
$$
  I^{\boldsymbol\Sigma}
  \simeq
  \iint
  \widehat\rho(\tau_0\pm_0\abs{\xi_0}) 
  \frac{\sigma^j(X_1,X_2) g_j^{\pm_0}(\xi_0)F_1(X_1)
  F_2(X_2)}
  {\abs{\xi_0}\angles{\xi_0}^{s-1/2}\prod_{k=1}^2\angles{\xi_k}^{s_k} \angles{\tau_k\pm_k\abs{\xi_k}}^{b_k}}
  \, d\mu^{21}_{X_0}
  \, dX_0,
$$
where
\begin{gather*}
  s_1=-s_2=s,
  \qquad
  b_1 = 1/2+\varepsilon,
  \qquad
  b_2 = 1/2-2\varepsilon,
  \\
  \widetilde{\psi_k}
  = z_k \bigabs{\widetilde{\psi_k}},
  \qquad
  \bigabs{\widetilde{\psi_k}(X_k)}
  = \frac{F_k(X_k)}{\angles{\xi_k}^{s_k} \angles{\tau_k\pm_k\abs{\xi_k}}^{b_k}},
  \qquad X_k = (\tau_k,\xi_k), 
  \\
  \sigma^j(X_1,X_2) = \innerprod{\boldsymbol\alpha^j\mathbf\Pi(e_1)z_1(X_1)}{\mathbf\Pi(e_2)z_2(X_2)},
  \qquad
  e_k = \pm_k \frac{\xi_k}{\abs{\xi_k}} \in \mathbb S^2.
\end{gather*}
Here $z_k : \R^{1+3} \to \C^4$ is measurable, $\abs{z_k}=1$, $F_k \in L^2(\R^{1+3)}$ and $F_k \ge 0$, for $k=1,2$. The convolution measure $ d\mu^{21}_{X_0}$ is given by the rule in \eqref{B:2}, hence
$$
  X_0 = X_2 - X_1 \qquad \left( \iff \tau_0=\tau_2-\tau_1, \quad \xi_0=\xi_2-\xi_1 \right)
$$
in the above integral. We also define the angles $\theta_{01}$, $\theta_{02}$ and $\theta_{12}$ by \eqref{C:40}.

We want to prove the estimate
\begin{equation}\label{S:28}
  \Abs{I^{\boldsymbol\Sigma}_{\abs{\xi_0} \ge 1}} \lesssim C(\mathcal I_0) \norm{F_1}\norm{F_2}.
\end{equation}
Split
$$
  I^{\boldsymbol\Sigma} =  I^{\boldsymbol\Sigma}_{\abs{\xi_0} \ge 1} +  I^{\boldsymbol\Sigma}_{\abs{\xi_0} < 1}
$$
corresponding to the regions $\abs{\xi_0} \ge 1$  and $\abs{\xi_0} < 1$.

\subsection{Estimate for $I^{\pm_1,\pm_2}_{\abs{\xi_0} \ge 1}$} Let $N_0,N_1,N_2,L_0,L_1,L_2 \ge 1$ be dyadic numbers representing the sizes of the weights, as in section \ref{C}. Taking the absolute value and using the fact that $\widehat\rho$ is a Schwartz function (hence we can get as many powers as we like of $L_0$ in the denominator), we get
\begin{equation}\label{S:20}
  \Abs{I^{\boldsymbol\Sigma}_{\abs{\xi_0} \ge 1}}
  \lesssim
  \sum_{\boldN,\boldL} \frac{N_2^s I^{\boldsymbol\Sigma}_{\boldN,\boldL}}{N_0^{s+1/2} N_1^s L_0 L_1^{1/2+\varepsilon} L_2^{1/2-2\varepsilon}},
\end{equation}
where $\boldN = (N_0,N_1,N_2)$, $\boldL = (L_0,L_1,L_2)$ and, with notation as in \eqref{B:200},
\begin{gather*}
  I^{\boldsymbol\Sigma}_{\boldN,\boldL}
  =
  \iint
  \abs{\sigma_{12}^j(X_1,X_2)}
  \widetilde{u_{0,j}}(X_0)
  \widetilde{u_1}(X_1)
  \widetilde{u_2}(X_2)
  \, d\mu^{21}_{X_0}
  \, dX_0,
  \\
  \widetilde{u_{0,j}} = \chi_{K^{\pm_0}_{N_0,L_0}} F_{0,j},
  \qquad
  F_{0,j}(X_0)
  =
  \frac{\abs{g_j^{\pm_0}(\xi_0)}}{\angles{\tau_0\pm_0\abs{\xi_0}}}
  \qquad (j=1,2,3).
\end{gather*}
The sum over $\boldN$ is restricted by \eqref{C:41}.

By the same type of summation argument that was used in section \ref{Y} (we omit the details), \eqref{S:28} is easily deduced from \eqref{S:20} if we can prove the following:
\begin{align}
  \label{S:30}
  I^{\boldsymbol\Sigma}_{\boldN,\boldL}
  &\lesssim \left(N_0L_0^2L_1L_2\right)^{1/2}
  C(\mathcal I_0) \norm{u_1}\norm{u_2},
  \\
  \label{S:32}
  I^{\boldsymbol\Sigma}_{\boldN,\boldL}
  &\lesssim \left(
  \left(\Nmin^{012}\right)^3 \Lmin^{012}\right)^{1/2}
  C(\mathcal I_0) \norm{u_1}\norm{u_2}.
\end{align}

First, \eqref{S:32} follows from \eqref{M:18}, if we estimate $\abs{\sigma^j(X_1,X_2)} \lesssim 1$ and use the fact that $\norm{u_0} \lesssim \mathcal I_0$, by \eqref{S:6}.

To prove \eqref{S:30}, on the other hand, we need to use the structure of the symbol $\sigma_{12}^j$, encoded in the identity \eqref{D:54}. We claim that
\begin{multline}\label{S:40}
  \Abs{\sigma^j(X_1,X_2) g_j^{\pm_0}(\xi_0)}
  \lesssim \theta_{12} \abs{g^{\pm_0}(\xi_0)}
  + \min(\theta_{01},\theta_{02}) \abs{g^{\pm_0}(\xi_0)}
  \\
  +
  \angles{\xi_0}^{s-3/2} \Abs{\widehat{\abs{\psi_0}^2}(\xi_0)},
\end{multline}
where again we sum over $j=1,2,3$, and we assume $\abs{\xi_0} \ge 1$.

To prove \eqref{S:40}, we use the identity \eqref{D:54} to write
\begin{align*}
  \sigma^j(X_1,X_2) g_j^{\pm_0}(\xi_0)
  &=
  \innerprod{\boldsymbol\alpha^j\mathbf\Pi(e_1)z_1}{\mathbf\Pi(e_2)z_2}
  g_j^{\pm_0}(\xi_0)
  \\
  &=
  \innerprod{\mathbf\Pi(e_2)\mathbf\Pi(-e_1)\boldsymbol\alpha^j z_1}{z_2}
  g_j^{\pm_0}(\xi_0)
  + \innerprod{z_1}{\mathbf\Pi(e_2)z_2} e_1^j g_j^{\pm_0}(\xi_0)
  \\
  &=
  \innerprod{\mathbf\Pi(e_2)\mathbf\Pi(-e_1)\boldsymbol\alpha^j z_1}{z_2}
  g_j^{\pm_0}(\xi_0)
  \\
  & \;\; + \innerprod{z_1}{\mathbf\Pi(e_2)z_2} (e_1^j-e_0^j) g_j^{\pm_0}(\xi_0)
  +  \innerprod{z_1}{\mathbf\Pi(e_2)z_2} e_0^j g_j^{\pm_0}(\xi_0),
\end{align*}
and by \eqref{B:17} and \eqref{S:10} this implies \eqref{S:40} with $\min(\theta_{01},\theta_{02})$ replaced by $\theta_{01}$. But since $\boldsymbol\alpha^j$ is self-adjoint, we can also move it onto the second factor in the inner product defining $\sigma^j$, and then we get instead the angle $\theta_{02}$. This proves \eqref{S:40}.

Corresponding to the first and second terms in the right side of \eqref{S:40}, we need to prove \eqref{S:30} for the integrals
\begin{align*}
  I^{\boldsymbol\Sigma,1}_{\boldN,\boldL}
  &=
  \iint
  \theta_{12}
  \widetilde{u_0}(X_0)
  \widetilde{u_1}(X_1)
  \widetilde{u_2}(X_2)
  \, d\mu^{21}_{X_0}
  \, dX_0,
  \\
  I^{\boldsymbol\Sigma,2}_{\boldN,\boldL}
  &=
  \iint
  \min(\theta_{01},\theta_{02})
  \widetilde{u_0}(X_0)
  \widetilde{u_1}(X_1)
  \widetilde{u_2}(X_2)
  \, d\mu^{21}_{X_0}
  \, dX_0,
\end{align*}
where now
$$
  F_0(X_0) = \frac{\abs{g^{\pm_0}(\xi_0)}}
  {\angles{\tau_0\pm_0\abs{\xi_0}}},
$$
hence $\norm{u_0} \simeq \norm{F_0} \lesssim \mathcal I_0$, by \eqref{S:6}.

For $I^{\boldsymbol\Sigma,1}_{\boldN,\boldL}$, we get \eqref{S:30} (with only one power of $L_0$ inside the parentheses) from the null form estimate in Theorem \ref{D:Thm}.

Now consider $I^{\boldsymbol\Sigma,2}_{\boldN,\boldL}$. By Lemma \ref{F:Lemma4},
$$
  \min(\theta_{01},\theta_{02})
  \lesssim \left( \frac{\Lmax^{012}}{N_0} \right)^{1/2},
$$
so by the Cauchy-Schwarz inequality,
\begin{align*}
  I^{\boldsymbol\Sigma,2}_{\boldN,\boldL}
  &\lesssim
  \left( \frac{\Lmax^{012}}{N_0} \right)^{1/2}
  \norm{u_0} \bignorm{ \Proj_{K^{\pm_0}_{N_0,L_0}} \left( u_1 \overline{u_2} \right) }
  \\
  &\lesssim
  \left( \frac{\Lmax^{012}}{N_0} N_0^2 L_0 \Lmin^{12} \right)^{1/2}
  \mathcal I_0 \norm{u_1}\norm{u_2},
\end{align*}
where we used \eqref{M:14} and $\norm{u_0} \lesssim \mathcal I_0$. This proves \eqref{S:30} for $I^{\boldsymbol\Sigma,2}_{\boldN,\boldL}$. Note that here we may actually pick up two powers of $L_0$ inside the parentheses (recall that this is allowed because $\widehat \rho$ is rapidly decreasing).

Now consider the case where the third term in the right side of \eqref{S:40} dominates. We may assume $\theta_{12} \ll 1$, since otherwise we can reduce to $I^{\boldsymbol\Sigma,1}_{\boldN,\boldL}$ by estimating $\abs{\sigma^j(X_1,X_2)} \lesssim 1$. So by Lemma \ref{D:Lemma1},
$$
  \theta_{12} \lesssim \gamma \equiv \left( \frac{N_0\Lmax^{012}}{N_1N_2} \right)^{1/2},
$$
hence we need to prove \eqref{S:30} for
$$
  I^{\boldsymbol\Sigma,3}_{\boldN,\boldL}
  = N_0^{s-3/2}
  \iint
  \chi_{\theta_{12} \lesssim \gamma}
  \widetilde{u_0}(X_0)
  \widetilde{u_1}(X_1)
  \widetilde{u_2}(X_2)
  \, d\mu^{21}_{X_0}
  \, dX_0,
$$
where now
$$
  F_0(\tau_0,\xi_0)
  =
  \frac{\Abs{\widehat{\abs{\psi_0}^2}(\xi_0)}}{\angles{\tau_0\pm_0\abs{\xi_0}}}.
$$
By Lemma \ref{B:Lemma5} applied to the pair $(\pm_1\xi_1,\pm_2\xi_2)$,
\begin{equation}\label{S:60}
  I^{\boldsymbol\Sigma,3}_{\boldN,\boldL}
  \lesssim N_0^{s-3/2}
  \sum_{\omega_1,\omega_2}
  \iint
  \widetilde{u_0}(X_0)
  \widetilde{u_1^{\gamma,\omega_1}}(X_1)
  \widetilde{u_2^{\gamma,\omega_2}}(X_2)
  \, d\mu^{21}_{X_0}
  \, dX_0,
\end{equation}
where the sum is over $\omega_1,\omega_2 \in \Omega(\gamma)$ with $\theta(\omega_1,\omega_2) \lesssim \gamma$. Thus, $\xi_1,\xi_2$ are both restricted to a tube of radius
$$
  r \sim \Nmax^{12}\gamma \sim \left( \frac{\Nmax^{12}N_0\Lmax^{012}}{\Nmin^{12}} \right)^{1/2}
$$
around $\R\omega_1$, hence the same is true of $\xi_0=\xi_2-\xi_1$, so we get
\begin{align*}
  I^{\boldsymbol\Sigma,3}_{\boldN,\boldL}
  &\lesssim N_0^{s-3/2}
  \sum_{\omega_1,\omega_2}
  \norm{\Proj_{\R \times T_r(\omega_1)} u_0}
  \norm{ \Proj_{K^{\pm_0}_{N_0,L_0}} \left( u_1^{\gamma,\omega_1}
  \overline{u_2^{\gamma,\omega_2}} \right) }
  \\
  &\lesssim
  N_0^{s-3/2}
  \sum_{\omega_1,\omega_2}
  \norm{\Proj_{T_r(\omega_1)}
  \Proj_{\angles{\xi_0} \sim N_0} \abs{\psi_0}^2}
  \left( N_0^2 L_0 \Lmin^{12} \right)^{1/2} \norm{u_1^{\gamma,\omega_1}} 
  \norm{u_2^{\gamma,\omega_2}}.
\end{align*}
where we used \eqref{M:14}. Applying the estimate (proved below)
\begin{equation}\label{S:56}
  \sup_{\omega \in \mathbb S^2} \norm{\Proj_{T_r(\omega)}
  \Proj_{\angles{\xi_0} \sim N_0} \abs{\psi_0}^2}
  \lesssim
  \left( r^2 N_0 \right)^{1/2} N_0^{-s} \norm{\psi_0}_{H^s}^2,
\end{equation}
and summing $\omega_1,\omega_2$ as in \eqref{B:208}, we then obtain
\begin{align*}
  I^{\boldsymbol\Sigma,3}_{\boldN,\boldL}
  &\lesssim N_0^{s-3/2}
  \left( r^2 N_0 \right)^{1/2} N_0^{-s} \norm{\psi_0}_{H^s}^2
  \left( N_0 \Nmin^{012} L_0 \Lmin^{12} \right)^{1/2}
  \norm{u_1}\norm{u_2}
  \\
  &\sim
  N_0^{-3/2}
  \left( \frac{\Nmax^{12}N_0\Lmax^{012}}{\Nmin^{12}} N_0 \right)^{1/2} \norm{\psi_0}_{H^s}^2
  \left( N_0 \Nmin^{012} L_0 \Lmin^{12} \right)^{1/2}
  \norm{u_1}\norm{u_2}
  \\
  &\sim
  \left( \frac{\Nmax^{12}\Nmin^{012}}{\Nmin^{12}} L_0 \Lmin^{12} \Lmax^{012}  \right)^{1/2} \norm{\psi_0}_{H^s}^2
  \norm{u_1}\norm{u_2}
  \\
  &\lesssim
  N_0^{-3/2}
  \left( N_0 L_0 \Lmin^{12} \Lmax^{012} \right)^{1/2} \norm{\psi_0}_{H^s}^2
  \norm{u_1}\norm{u_2},
\end{align*}
where we used \eqref{C:43} in the last step. This proves \eqref{S:30} for $I^{\boldsymbol\Sigma,3}_{\boldN,\boldL}$, under the assumption that \eqref{S:56} holds.

In fact, \eqref{S:56} is an easy consequence of the estimate
\begin{equation}\label{S:66}
  \sup_{\omega \in \mathbb S^2} \norm{\Proj_{T_r(\omega)}
  \Proj_{\angles{\xi_0} \sim N_0}(fg)}
  \lesssim
  \left( r^2 N_0 \right)^{1/2} \norm{f} \norm{g}
  \qquad \left(\forall f,g \in \mathcal S(\R^3)\right)
\end{equation}
which reduces, by an argument based on the Cauchy-Schwarz inequality (see \cite{Tao:2001} or \cite{Selberg:2008a}), to the fact that the volume of the $\xi_0$-support is $O(r^2N_0)$.

This completes the proof of \eqref{S:28}.

\subsection{Estimate for $I^{\pm_1,\pm_2}_{\abs{\xi_0} < 1}$} Since $\angles{\xi_2} \lesssim \angles{\xi_0} + \angles{\xi_1} \lesssim \angles{\xi_1}$, and since $\widehat \rho$ is rapidly decreasing,
\begin{align*}
  \Abs{I^{\boldsymbol\Sigma}_{\abs{\xi_0} < 1}}
  &\lesssim
  \iint
  \frac{\chi_{\abs{\xi_0} < 1} \abs{g^{\pm_0}(\xi_0)}}{\abs{\xi_0}\angles{\tau_0\pm_0\abs{\xi_0}}^2}
  F_1(X_1) F_2(X_2)
  \, d\mu^{21}_{X_0}
  \, dX_0
  \\
  &=
  \iint
  \frac{\chi_{\abs{\xi_0} < 1} \abs{g^{\pm_0}(\xi_0)}}{\abs{\xi_0}\angles{\tau_0\pm_0\abs{\xi_0}}^2}
  F_1(X_1) F_2(X_0+X_1)
  \, dX_1
  \, dX_0
  \\
  &\le
  \int
  \frac{\chi_{\abs{\xi_0} < 1} \abs{g^{\pm_0}(\xi_0)}}{\abs{\xi_0}\angles{\tau_0\pm_0\abs{\xi_0}}^2}
  \, dX_0
  \norm{F_1}\norm{F_2}
  \\
  &\lesssim
  \int
  \frac{\chi_{\abs{\xi_0} < 1} \abs{g^{\pm_0}(\xi_0)}}{\abs{\xi_0}}
  \, d\xi_0
  \norm{F_1}\norm{F_2}
  \\
  &\le
  \left(\int
  \frac{\chi_{\abs{\xi_0} < 1}}{\abs{\xi_0}^2}
  \, d\xi_0 \right)^{1/2}
  \norm{g^{\pm_0}}
  \norm{F_1}\norm{F_2},
\end{align*}
proving \eqref{S:28}.

This completes the proof of \eqref{C:10}.

\section{Estimates for the electromagnetic field}\label{N}

Here we prove Theorem \ref{A:Thm2}. Since Maxwell's equations are linear, uniqueness is trivial, so we only need to construct the solution. Let us define $(\mathbf E,\mathbf B)$ by \eqref{A:6}. Since we know that $\square A_\mu = - J_\mu$ and that $A_\mu$ satisfies the Lorenz gauge condition, a direct calculation shows that \eqref{A:2} is satisfied, so it only remains to prove
$$
  \norm{\mathbf E(t)}_{H^{s-1/2}}
  +
  \norm{\mathbf B(t)}_{H^{s-1/2}}
  \le C
  \qquad (\forall t \in [-T,T]),
$$
where $C$ depends on the data norm $\mathcal I_0$ defined by \eqref{C:3}. But the components of $\mathbf E, \mathbf B$ are just the nonzero components of the electromagnetic tensor
$$
  F_{\kappa\lambda} = \partial_\kappa A_\lambda - \partial_\lambda A_\kappa,
$$
so we need to prove
\begin{equation}\label{N:10}
  \norm{F_{\kappa\lambda}(t)}_{H^{s-1/2}}
  \le C \qquad (\forall t \in [-T,T]).
\end{equation}
Of course, it suffices to consider indices $(\kappa,\lambda) = (k,l), (k,0)$, where $k,l=1,2,3$.

Since $\square A_\kappa = - J_\kappa$,
$$
  \square F_{\kappa\lambda} = -\partial_\kappa J_\lambda + \partial_\lambda J_\kappa.
$$
Split
$$
  F_{\kappa\lambda} = F_{\kappa\lambda}^{\text{hom.}} + F_{\kappa\lambda}^{\text{inh.}},
$$
where $\square F_{\kappa\lambda}^{\text{hom.}} = 0$ with the initial data determined by $(\mathbf E_0,\mathbf B_0)$, and
\begin{equation}\label{N:10:2}
  F_{\kappa\lambda}^{\text{inh.}}
  =
  \square^{-1}
  \left( -\partial_\kappa J_\lambda + \partial_\lambda J_\kappa \right).
\end{equation}

For the homogeneous part $F_{\kappa\lambda}^{\text{hom.}}$, \eqref{N:10} holds by the energy inequality for the wave equation and the assumption $\mathbf E_0,\mathbf B_0 \in H^{s-1/2}$.

It remains to prove \eqref{N:10} for $F_{\kappa\lambda}^{\text{inh.}}$. Splitting $\psi = \psi_+ + \psi_-$ we see from \eqref{A:10} and \eqref{B:2} that
$$
  \Abs{\widetilde{J_\kappa}(X_0)}
  \le
  \sum_{\pm_1,\pm_2}
  \int
  \frac{\Innerprod{\boldsymbol\alpha_\kappa\boldsymbol\Pi(e_1)z_1}{\boldsymbol\Pi(e_2)z_2} G_1(X_1) G_2(X_2)}
  {\angles{\xi_1}^s \angles{\xi_2}^s \angles{\tau_1\pm_1\abs{\xi_1}}^{1/2+\varepsilon} \angles{\tau_2\pm_2\abs{\xi_2}}^{1/2+\varepsilon}}
  \, d\mu^{12}_{X_0},
$$
where $e_j = \pm_j \xi_j/\abs{\xi_j}$, $z_j : \R^{1+3} \to \C^4$ is measurable, $\abs{z_j}=1$, $G_j \in L^2(\R^{1+3)}$ and $G_j \ge 0$, for $j=1,2$. Now observe that the symbol of $(1/i)\partial_\kappa$ is
$$
  X_0^\kappa = \begin{cases} \tau_0 \quad &\text{for $\kappa=0$},
  \\
  \xi_0^\kappa \quad &\text{for $\kappa=1,2,3$},
  \end{cases}
$$
recalling that $\xi_0=(\xi_0^1,\xi_0^2,\xi_0^3)$. Thus, applying Lemma \ref{C:Lemma1} to \eqref{N:10:2}, and writing $\boldsymbol\Sigma = (\pm_0,\pm_1,\pm_2)$, we have
\begin{equation}\label{N:10:4}
  \norm{F_{\kappa\lambda}^{\text{inh.}}(t)}_{H^{s-1/2}}
  \lesssim 
  \sum_{\boldsymbol\Sigma} I^{\boldsymbol\Sigma}
  \qquad (\forall t \in [-T,T]),
\end{equation}
where
\begin{gather*}
  I^{\boldsymbol\Sigma}
  =
  \norm{ \iint \frac{\sigma_{\kappa\lambda}(X_1,X_2) G_1(X_1) G_2(X_2)}
  {\abs{\xi_0}\angles{\xi_0}^{1/2-s} \angles{\tau_0\pm_0\abs{\xi_0}}
  \prod_{j=1}^2\angles{\xi_j}^s\angles{\tau_j\pm_j\abs{\xi_j}}^{1/2+\varepsilon}}
  \, d\mu^{12}_{X_0} \, d\tau_0 }_{L^2_{\xi_0}},
  \\
  \sigma_{\kappa\lambda}(X_1,X_2)
  = X_0^\kappa \Innerprod{\boldsymbol\alpha_\lambda\boldsymbol\Pi(e_1)z_1}{\boldsymbol\Pi(e_2)z_2}
  -
  X_0^\lambda \Innerprod{\boldsymbol\alpha_\kappa\boldsymbol\Pi(e_1)z_1}{\boldsymbol\Pi(e_2)z_2}.
\end{gather*}

Define $\theta_{12}, \theta_{01}, \theta_{02}$ as in \eqref{C:40}. We have the following null structure:

\begin{lemma}\label{N:Lemma} With notation as above,
\begin{align}
  \label{N:12}
  \frac{\Abs{\sigma_{kl}(X_1,X_2)}}{\abs{\xi_0}}
  &\lesssim
  \theta_{12} + \min(\theta_{01},\theta_{02}),
  \\
  \label{N:14}
  \frac{\Abs{\sigma_{k0}(X_1,X_2)}}{\abs{\xi_0}}
  &\lesssim
  \theta_{12} + \min(\theta_{01},\theta_{02})
  + \frac{\bigabs{\tau_0\pm_0\abs{\xi_0}}}{\abs{\xi_0}},
\end{align}
for $k,l=1,2,3$.
\end{lemma}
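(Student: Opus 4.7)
The plan is to reveal the null structure inside $\sigma_{\kappa\lambda}$ by repeatedly applying the commutation identity \eqref{D:54}, namely $\boldsymbol\alpha^j\boldsymbol\Pi(e) = \boldsymbol\Pi(-e)\boldsymbol\alpha^j + e^j \mathbf I_{4\times4}$, together with the basic angle estimate \eqref{B:17} that bounds $\boldsymbol\Pi(-e_1)\boldsymbol\Pi(e_2)$ by $\theta_{12}$. Any inner product that carries an extra factor $\boldsymbol\Pi(-e_1)$ (or $\boldsymbol\Pi(-e_2)$) after this manipulation is then $O(\theta_{12})$, and the remaining terms will be shown to have \emph{algebraic} coefficients depending on $\xi_0,\tau_0,e_0,e_1,e_2$ which either force the smallness of $\theta_{01}$ (or, symmetrically, $\theta_{02}$) or else pay the price $\abs{\hypwt_0} = \abs{\tau_0\pm_0\abs{\xi_0}}$. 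Writing $\xi_0 = \epsilon_0\abs{\xi_0}e_0$ with $\epsilon_0 = \pm_0 \in\{+1,-1\}$ and $\tau_0 = \hypwt_0 - \epsilon_0\abs{\xi_0}$ is the bridge between the Cartesian coordinates appearing in $\sigma_{\kappa\lambda}$ and the unit vectors $e_j$ in which angles are measured.

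For \eqref{N:12}, both $\boldsymbol\alpha_k = \boldsymbol\alpha^k$ and $\boldsymbol\alpha_l = \boldsymbol\alpha^l$, so applying \eqref{D:54} in the first factor of each inner product of $\sigma_{kl}$ gives, modulo an $O(\theta_{12}\abs{\xi_0})$ error,
\[
  \sigma_{kl} \;=\; (\xi_0^k e_1^l - \xi_0^l e_1^k)\,\innerprod{z_1}{\boldsymbol\Pi(e_2)z_2} + O(\theta_{12}\abs{\xi_0}).
\]
Substituting $\xi_0 = \epsilon_0\abs{\xi_0}e_0$, the prefactor becomes $\epsilon_0\abs{\xi_0}(e_0^k e_1^l - e_0^l e_1^k)$, whose absolute value is at most $\abs{\xi_0}\,\abs{e_0\wedge e_1} = \abs{\xi_0}\sin\theta_{01}$; hence $\abs{\sigma_{kl}} \lesssim \abs{\xi_0}(\theta_{12}+\theta_{01})$. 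Moving $\boldsymbol\alpha^j$ to the \emph{second} slot instead (by the hermiticity of $\boldsymbol\alpha^j$ and then applying \eqref{D:54}) produces $e_2^j$ in place of $e_1^j$ and therefore yields the same bound with $\theta_{02}$ in place of $\theta_{01}$. Taking the minimum gives \eqref{N:12}.

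For \eqref{N:14}, the new feature is that $\boldsymbol\alpha_0 = -\mathbf I_{4\times4}$, so the first piece of $\sigma_{k0}$ is simply $-\xi_0^k\innerprod{\boldsymbol\Pi(e_1)z_1}{\boldsymbol\Pi(e_2)z_2}$. Applying \eqref{D:54} to the second piece as before and writing $\boldsymbol\Pi(e_1) = \mathbf I - \boldsymbol\Pi(-e_1)$ in the first piece, both resulting error contributions have the form $\boldsymbol\Pi(-e_1)\boldsymbol\Pi(e_2)$ and are therefore $O(\theta_{12})$. Combining, one obtains
\[
  \sigma_{k0} \;=\; -(\xi_0^k + \tau_0 e_1^k)\,\innerprod{z_1}{\boldsymbol\Pi(e_2)z_2} + O\bigl(\theta_{12}(\abs{\xi_0}+\abs{\tau_0})\bigr).
\]
Now substitute $\xi_0^k = \epsilon_0\abs{\xi_0}e_0^k$ and $\tau_0 = \hypwt_0 - \epsilon_0\abs{\xi_0}$ to get the key algebraic identity
\[
  \xi_0^k + \tau_0 e_1^k \;=\; \epsilon_0\abs{\xi_0}(e_0^k - e_1^k) + \hypwt_0\, e_1^k,
\]
whose absolute value is $\lesssim \abs{\xi_0}\theta_{01} + \abs{\hypwt_0}$. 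Since $\abs{\tau_0}\le \abs{\xi_0}+\abs{\hypwt_0}$ and $\theta_{12}\lesssim 1$, dividing through by $\abs{\xi_0}$ gives exactly the right hand side of \eqref{N:14} with $\theta_{01}$ in place of $\min(\theta_{01},\theta_{02})$. Repeating the argument with $\boldsymbol\alpha^k$ moved to the second slot before using \eqref{D:54} yields $e_2^k$ and hence $\theta_{02}$, and the minimum gives \eqref{N:14}.

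The main obstacle is only bookkeeping: one must be careful to apply \eqref{D:54} so that every "non-null" leftover inner product has the \emph{same} underlying scalar structure (either $\innerprod{z_1}{\boldsymbol\Pi(e_2)z_2}$ or $\innerprod{\boldsymbol\Pi(e_1)z_1}{z_2}$) so that the coefficients can be added into a single expression of the form $\xi_0^k+\tau_0 e_j^k$ or $\xi_0^k e_j^l - \xi_0^l e_j^k$. Once this is done, the conversion from Cartesian coordinates $(\xi_0,\tau_0)$ to the unit-vector-plus-hyperbolic-weight parametrization is the entirety of the null structure in $F_{\kappa\lambda}^{\mathrm{inh.}}$.
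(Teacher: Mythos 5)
Your proof is correct and follows essentially the same route as the paper: raise/lower indices to write $\sigma_{\kappa\lambda}$ in terms of $\boldsymbol\alpha^j$, apply the commutation identity \eqref{D:54} to peel off the $\boldsymbol\Pi(-e_1)$-error (controlled by $\theta_{12}$ via \eqref{B:17}), and convert the remaining algebraic prefactors $\xi_0^k e_1^l - \xi_0^l e_1^k$ and $\xi_0^k + \tau_0 e_1^k$ to $\theta_{01}$ and $\theta_{01} + |\hypwt_0|/|\xi_0|$, with $\theta_{02}$ obtained by moving $\boldsymbol\alpha^j$ to the other slot using self-adjointness. Your write-up only differs in making the intermediate substitution $\tau_0 = \hypwt_0 - \epsilon_0|\xi_0|$ fully explicit, which the paper leaves implicit.
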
 

\begin{proof} By the rule for raising or lowering indices, $\boldsymbol\alpha_k=\boldsymbol\alpha^k$ for $k=1,2,3$, whereas $\boldsymbol\alpha_0=-\boldsymbol\alpha^0 = \mathbf I_{4 \times 4}$. Thus,
\begin{align*}
  \frac{\sigma_{kl}(X_1,X_2)}{\abs{\xi_0}} &= \frac{\xi_0^k}{\abs{\xi_0}} \Innerprod{\boldsymbol\alpha^l\boldsymbol\Pi(e_1)z_1}{\boldsymbol\Pi(e_2)z_2}
  -
  \frac{\xi_0^l}{\abs{\xi_0}} \Innerprod{\boldsymbol\alpha^k\boldsymbol\Pi(e_1)z_1}{\boldsymbol\Pi(e_2)z_2},
  \\
  - \frac{\sigma_{k0}(X_1,X_2)}{\abs{\xi_0}} &= \frac{\xi_0^k}{\abs{\xi_0}} \Innerprod{\boldsymbol\Pi(e_1)z_1}{\boldsymbol\Pi(e_2)z_2}
  +
  \frac{\tau_0}{\abs{\xi_0}} \Innerprod{\boldsymbol\alpha^k\boldsymbol\Pi(e_1)z_1}{\boldsymbol\Pi(e_2)z_2}.
\end{align*}

Then by the commutation identity \eqref{D:54} we see that
$$
  \frac{\Abs{\sigma_{kl}(X_1,X_2)}}{\abs{\xi_0}}
  \lesssim \theta_{12}
  + \abs{e_0^k e_1^l - e_0^l e_1^k}
  \lesssim \theta_{12} + \theta_{01},
$$
but since the $\boldsymbol\alpha^k$ are self-adjoint, we get also the same estimate with $\theta_{02}$ instead of $\theta_{01}$, proving \eqref{N:12}.

Similarly we find that
$$
  \frac{\Abs{\sigma_{k0}(X_1,X_2)}}{\abs{\xi_0}}
  \lesssim \theta_{12}
  + \abs{e_0^k - e_1^k}
  + \frac{\bigabs{\tau_0\pm_0\abs{\xi_0}}}{\abs{\xi_0}}
  \lesssim \theta_{12} + \theta_{01} + \frac{\bigabs{\tau_0\pm_0\abs{\xi_0}}}{\abs{\xi_0}},
$$
but again, by the self-adjointness of the $\boldsymbol\alpha^k$, we can also get $\theta_{02}$ instead of $\theta_{01}$, proving \eqref{N:14}.
\end{proof}

In view of \eqref{N:10:4} and Lemma \ref{N:Lemma}, we reduce \eqref{N:10} to proving (dropping the superscript $\boldsymbol\Sigma$ for simplicity)
\begin{equation}\label{N:20}
  I_1, I_2, I_3 \lesssim \norm{F_1}\norm{F_2},
\end{equation}
where
\begin{align*}
  I_1 &=
  \norm{ \iint \frac{\theta_{12} G_1(X_1) G_2(X_2)}{\angles{\xi_0}^{1/2-s} \angles{\tau_0\pm_0\abs{\xi_0}}
  \prod_{j=1}^2\angles{\xi_j}^s\angles{\tau_j\pm_j\abs{\xi_j}}^{1/2+\varepsilon}}
  \, d\mu^{12}_{X_0} \, d\tau_0 }_{L^2_{\xi_0}},
  \\
  I_2 &=
  \norm{ \iint \frac{\min(\theta_{01},\theta_{02}) G_1(X_1) G_2(X_2)}{\angles{\xi_0}^{1/2-s} \angles{\tau_0\pm_0\abs{\xi_0}} 
  \prod_{j=1}^2\angles{\xi_j}^s\angles{\tau_j\pm_j\abs{\xi_j}}^{1/2+\varepsilon}}
  \, d\mu^{12}_{X_0} \, d\tau_0 }_{L^2_{\xi_0}},
  \\
  I_3 &=
  \norm{ \iint \frac{G_1(X_1) G_2(X_2)}{\angles{\xi_0}^{1/2-s}\abs{\xi_0} 
  \prod_{j=1}^2\angles{\xi_j}^s\angles{\tau_j\pm_j\abs{\xi_j}}^{1/2+\varepsilon}}
  \, d\mu^{12}_{X_0} \, d\tau_0 }_{L^2_{\xi_0}}.
\end{align*}

\subsection{Estimate for $I_1$} By dyadic decomposition as in section \ref{C},
\begin{equation}\label{N:30}
  I_1 \lesssim
  \sum_{\boldN,\boldL}
  \frac{J_{\boldN,\boldL}}{N_0^{1/2-s}(N_1N_2)^sL_0(L_1L_2)^{1/2+\varepsilon}},
\end{equation}
where $\boldN = (N_0,N_1,N_2)$, $\boldL = (L_0,L_1,L_2)$ and
\begin{equation}\label{N:31}
  J_{\boldN,\boldL}
  =
  \norm{ \iint \chi_{K^{\pm_0}_{N_0,L_0}}\!(X_0)
  \, \theta_{12} \,
  \widetilde{u_1}(X_1) \widetilde{u_2}(X_2)
  \, d\mu^{12}_{X_0} \, d\tau_0 }_{L^2_{\xi_0}},
\end{equation}
where we use the notation \eqref{B:200}, but with $G$'s instead of $F$'s.

Applying the Cauchy-Schwarz inequality with respect to $\tau_0$, followed by either Theorem \ref{D:Thm} or the trivial estimate \eqref{M:18} (which holds without the angle),
\begin{align}
  \label{N:32}
  J_{\boldN,\boldL}
  &\lesssim L_0^{1/2} \left(N_0 L_0 L_1 L_2\right)^{1/2}
  \norm{u_1}\norm{u_2},
  \\
  \label{N:34}
  J_{\boldN,\boldL}
  &\lesssim L_0^{1/2} \left(\left(\Nmin^{012}\right)^3 \Lmin^{012}\right)^{1/2}
  \norm{u_1}\norm{u_2},
\end{align}
Take the former to the power $1-2\varepsilon$ and the latter to the power $2\varepsilon$. Then plugging the interpolated estimate into \eqref{N:30} and summing by the same type of argument that was used in section \ref{Y}, we get \eqref{N:20} for $I_1$ (for $\varepsilon > 0$ sufficiently small depending on $s > 0$).

\subsection{Estimate for $I_2$} Again we dominate by a sum like \eqref{N:30}, but now $\theta_{12}$ in \eqref{N:31} is replaced by $\min(\theta_{01},\theta_{02})$. We need to prove \eqref{N:32}. By Lemma \ref{F:Lemma4},
\begin{equation}\label{N:40}
  \min(\theta_{01},\theta_{02})
  \lesssim \left( \frac{\Lmax^{012}}{N_0} \right)^{1/2}.
\end{equation}
By comparison, in the estimate for $I_1$ we used $\theta_{12} \lesssim (\Lmax^{012}/\Nmin^{12})^{1/2}$ to get \eqref{N:32} (implicitly, since we used Theorem \ref{D:Thm}). Thus, the analysis for $I_1$ applies also here if $N_0 \sim \Nmin^{12}$, so we may assume
\begin{equation}\label{N:41}
  N_0 \ll N_1 \sim N_2.
\end{equation}
We distinguish the cases $L_0 \lesssim \Lmax^{12}$ and $L_1,L_2 \ll L_0$.

\subsubsection{The case $L_0 \lesssim \Lmax^{12}$} We apply Cauchy-Schwarz as we did for $I_1$ (hence we pick up a factor $L_0^{1/2}$), and use \eqref{N:40} and the bilinear estimate \eqref{M:14}, obtaining
\begin{align*}
  J_{\boldN,\boldL}
  &\lesssim L_0^{1/2} \left( \frac{\Lmax^{12}}{N_0} \right)^{1/2}
  \left(N_0^2 L_0 \Lmin^{12}\right)^{1/2}
  \norm{u_1}\norm{u_2},
\end{align*}
proving \eqref{N:32}.

\subsubsection{The case $L_1,L_2 \ll L_0$} Now we get into trouble if we try to follow the same approach as above, since we would need Theorem \ref{M:Thm} to hold with $C^2 \sim N_0^2 L_1 L_2$, but this is not true in general. The problem is that we pick up too many powers of $L_0$. But instead of using $\tau_0\pm_0\abs{\xi_0}=O(L_0)$ when we apply the Cauchy-Schwarz inequality with respect to $\tau_0$, we can find another restriction on $\tau_0$ by decomposing into angular sectors based on the maximal size of $\theta_{12}$, as we now show.

We may assume $\theta_{12} \ll 1$, since otherwise we reduce to $I_1$. Therefore
\begin{equation}\label{N:42}
  \theta_{12} \lesssim \gamma \equiv \biggl(\frac{N_0L_0}{N_1N_2}\biggr)^{1/2},
\end{equation}
and now we apply Lemma \ref{D:Lemma5} and \eqref{N:40}, thus dominating $J_{\boldN,\boldL}$ by
\begin{equation}\label{N:43}
  \sum_{\omega_1,\omega_2}
  \left( \frac{L_0}{N_0} \right)^{1/2}
  \norm{ \iint \chi_{K^{\pm_0}_{N_0,L_0}}\!(X_0)
  \,
  \widetilde{u_1^{\gamma,\omega_1}}(X_1)
  \widetilde{u_2^{\gamma,\omega_2}}(X_2)
  \, d\mu^{12}_{X_0} \, d\tau_0 }_{L^2_{\xi_0}},
\end{equation}
where the sum is over $\omega_1,\omega_2 \in \Omega(\gamma)$ with $\theta(\omega_1,\omega_2) \lesssim \gamma$. Then by \eqref{E:12:4},
\begin{equation}\label{N:44}
  \tau_0 + \xi_0\cdot\omega_1 = O(d'),
\end{equation}
where
$$
  d' = \max\left( \Lmax^{12}, \Nmax^{12} \gamma^2 \right)
  \sim \max\left( \Lmax^{12}, \frac{N_0L_0}{N_1} \right),
$$
recalling \eqref{N:41}. So now if we apply the Cauchy-Schwarz inequality with respect to $\tau_0$ in \eqref{N:43} using \eqref{N:44}, we get, defining $u_{012}^{\gamma,\omega_1,\omega_2}$ as in \eqref{G:14:6},
\begin{align*}
  \eqref{N:43}
  &\lesssim
  \sum_{\omega_1,\omega_2}
  \left(d'\right)^{1/2}
  \left( \frac{L_0}{N_0} \right)^{1/2}
  \norm{u_{012}^{\gamma,\omega_1,\omega_2}}
  \\
  &\lesssim
  \left(\max\left( \Lmax^{12}, \frac{N_0L_0}{N_1} \right)\right)^{1/2}
  \left( \frac{L_0}{N_0} \right)^{1/2}
  \\
  &\quad
  \times\min\left( N_0^2L_0\Lmin^{12}, N_0N_1L_1L_2 \right)^{1/2}
  \sum_{\omega_1,\omega_2}
  \norm{u_1^{\gamma,\omega_1}} \norm{u_2^{\gamma,\omega_2}}
  \\
  &\lesssim
  \left(N_0L_0^2L_1L_2\right)^{1/2}
  \norm{u_1}\norm{u_2},
\end{align*}
where we used Theorem \ref{M:Thm}, and we summed $\omega_1,\omega_2$ as in \eqref{B:208}.

This concludes the proof of \eqref{N:32} for $I_2$.

\subsection{Estimate for $I_3$}

Here the $\tau$-integrations decouple, so $I_3$ can be written
$$
  I_3 =
  \norm{ \iint \frac{g_1(\xi_1) g_2(\xi_2)}{\angles{\xi_0}^{1/2-s}\abs{\xi_0} 
  \angles{\xi_1}^s\angles{\xi_2}^s}
   \, \delta(\xi_0-\xi_1+\xi_2) \,d\xi_1 \, d\xi_2 }_{L^2_{\xi_0}},
$$
where
$$
  g_j(\xi_j) = \int \frac{G(X_j)}{\angles{\tau_j\pm_j\abs{\xi_j}}^{1/2+\varepsilon}} \, d\tau_j \qquad (j=1,2),
$$
hence
$$
  \norm{g_j} \le C_\varepsilon \norm{G_j}.
$$
Thus, it suffices to prove
$$
  I_3 \lesssim \norm{g_1}\norm{g_2},
$$
but this follows from the Sobolev product estimate (in physical space)
\begin{equation}\label{N:50}
  \norm{\abs{D}^{-1}(f_1f_2)}_{H^{s-1/2}} \lesssim \norm{f_1} \norm{f_2}_{H^s}
  \qquad (\forall f_1,f_2 \in \mathcal S(\R^3)),
\end{equation}
which we claim holds for any $s > 0$.

In fact, by Sobolev embedding and H\"older's inequality,
$$
  \norm{\abs{D}^{-1}(f_1f_2)}_{H^{s-1/2}} \lesssim
  \norm{f_1f_2}_{L^p}
  \lesssim \norm{f_1} \norm{f_2}_{L^{\frac{3}{3/2-s}}}
  \lesssim \norm{f_1} \norm{f_2}_{H^s},
$$
where
$$
  \frac{1}{p} - \frac12 = \frac{3/2-s}{3},
$$
and this proves \eqref{N:50}.

This concludes the proof of the estimate \eqref{N:20}, hence \eqref{N:10}.

\subsection{Proof of \eqref{A:110}} By the energy inequality for the wave equation, it is enough to prove
$$
  \int_{-T}^T \norm{\square A_\mu^{\text{inh.}}(t)}_{H^{s-3/2}} \, dt
  < \infty,
$$
which in view of \eqref{A:64} reduces to
$$
  \int_{-T}^T \norm{\abs{\psi(t)}^2}_{H^{s-3/2}} \, dt
  < \infty.
$$
But the latter follows immediately from \eqref{N:50} (which holds with $\abs{D}^{-1}$ replaced by $\angles{D}^{-1}$), since $\psi \in C([-T,T];H^s)$.

This concludes the proof of Theorem \ref{A:Thm2}.

\bibliographystyle{amsplain} % or amsalpha for name+year
\bibliography{mybibliography}

\providecommand{\bysame}{\leavevmode\hbox to3em{\hrulefill}\thinspace}
\providecommand{\MR}{\relax\ifhmode\unskip\space\fi MR }
% \MRhref is called by the amsart/book/proc definition of \MR.
\providecommand{\MRhref}[2]{%
  \href{http://www.ams.org/mathscinet-getitem?mr=#1}{#2}
}
\providecommand{\href}[2]{#2}
\begin{thebibliography}{10}

\bibitem{Selberg:2005}
P.~Bechouche, N.~J. Mauser, and S.~Selberg, \emph{On the asymptotic analysis of
  the {D}irac-{M}axwell system in the nonrelativistic limit}, J. Hyperbolic
  Differ. Equ. \textbf{2} (2005), no.~1, 129--182.

\bibitem{Bournaveas:1996}
N.~Bournaveas, \emph{Local existence for the {M}axwell-{D}irac equations in
  three space dimensions}, Comm. Partial Differential Equations \textbf{21}
  (1996), no.~5-6, 693--720.

\bibitem{Selberg:2007d}
P.~D'Ancona, D.~Foschi, and S.~Selberg, \emph{Null structure and almost optimal
  local regularity of the {D}irac-{K}lein-{G}ordon system}, J. Eur. Math. Soc.
  (2007), no.~4, 877--898.

\bibitem{Foschi:2000}
D.~Foschi and S.~Klainerman, \emph{Homogeneous ${L}^{2}$ bilinear estimates for
  wave equations}, Ann. Scient. ENS $4^e$ serie \textbf{23} (2000), 211--274.

\bibitem{Georgiev:1991}
Vladimir Georgiev, \emph{Small amplitude solutions of the {M}axwell-{D}irac
  equations}, Indiana Univ. Math. J. \textbf{40} (1991), no.~3, 845--883.

\bibitem{Gross:1966}
L.~Gross, \emph{The {C}auchy problem for the coupled {M}axwell and {D}irac
  equations}, Comm. Pure Appl. Math. \textbf{19} (1966), 1--15.

\bibitem{Klainerman:1993}
S.~Klainerman and M.~Machedon, \emph{Space-time estimates for null forms and
  the local existence theorem}, Comm. Pure Appl. Math. \textbf{46} (1993),
  no.~9, 1221--1268.

\bibitem{Klainerman:1994b}
\bysame, \emph{On the {M}axwell-{K}lein-{G}ordon equation with finite energy},
  Duke Math. J. \textbf{74} (1994), no.~1, 19--44.

\bibitem{Klainerman:1995a}
\bysame, \emph{Finite energy solutions of the {Y}ang-{M}ills equations in
  $\mathbb{R}^{3+1}$}, Ann. of Math. \textbf{142} (1995), no.~1, 39--119.

\bibitem{Klainerman:1995b}
\bysame, \emph{Smoothing estimates for null forms and applications}, Duke Math.
  J. \textbf{81} (1995), no.~1, 99--133.

\bibitem{Machedon:2004}
M.~Machedon and J.~Sterbenz, \emph{Almost optimal local well-posedness for the
  (3+1)-dimensional {M}axwell-{K}lein-{G}ordon equations}, J. Amer. Math. Soc.
  \textbf{17} (2004), no.~2, 297--359.

\bibitem{Masmoudi:2003}
N.~Masmoudi and K.~Nakanishi, \emph{From {M}axwell-{K}lein-{G}ordon and
  {M}axwell-{D}irac to {P}oisson-{S}chr{\"o}dinger}, Int. Math. Res. Not.
  (2003), no.~13, 697--734.

\bibitem{Selberg:2008a}
S.~Selberg, \emph{Anisotropic bilinear ${L}^2$ estimates related to the 3{D}
  wave equation}, preprint 2008.

\bibitem{Tao:2001}
T.~Tao, \emph{Multilinear weighted convolution of ${L}^{2}$ functions, and
  applications to nonlinear dispersive equations}, Amer. J. Math. \textbf{123}
  (2001), no.~5, 839--908.

\bibitem{Tao:2003}
\bysame, \emph{Local well-posedness of the {Y}ang-{M}ills equation in the
  temporal gauge below the energy norm}, J. Differential Equations \textbf{189}
  (2003), 366--382.

\end{thebibliography}

\end{document}